\DeclareMathOperator*{\argmin}{arg\,min}
\newtheorem{assumption}{Assumption}
\newtheorem{theorem}{Theorem}
\newtheorem{proposition}{Proposition}
\newtheorem{lemma}{Lemma}
\newtheorem{remark}{Remark}
\newtheorem{corollary}{Corollary}
\newcommand{\bigO}{\mathcal{O}}
\newcommand{\reals}{\mathbb{R}}
\newcommand{\Tr}{\mathbf{Tr}}
\newcommand{\Det}{\mathbf{Det}}
\begin{document}

\title{Non-asymptotic Global Convergence Rates of BFGS\\ with Exact Line Search
}

\author{Qiujiang Jin\thanks{Department of Electrical and Computer Engineering, The University of Texas at Austin, Austin, TX, USA  \{qiujiang@austin.utexas.edu\}} \qquad Ruichen Jiang\thanks{Department of Electrical and Computer Engineering, The University of Texas at Austin, Austin, TX, USA  \{rjiang@utexas.edu\}} \qquad Aryan Mokhtari\thanks{Department of Electrical and Computer Engineering, The University of Texas at Austin, Austin, TX, USA  \{mokhtari@austin.utexas.edu\}}}

\date{\empty}

\maketitle

\begin{abstract}

In this paper, we explore the non-asymptotic global convergence rates of the Broyden-Fletcher-Goldfarb-Shanno (BFGS) method implemented with exact line search. Notably, due to Dixon's equivalence result, our findings are also applicable to other quasi-Newton methods in the convex Broyden class employing exact line search, such as the Davidon-Fletcher-Powell (DFP) method. Specifically, we focus on problems where the objective function is strongly convex with Lipschitz continuous gradient and Hessian. Our results hold for any initial point and any symmetric positive definite initial Hessian approximation matrix. The analysis unveils a detailed three-phase convergence process, characterized by distinct linear and superlinear rates, contingent on the iteration progress. Additionally, our theoretical findings demonstrate the trade-offs between linear and superlinear convergence rates for BFGS when we modify the initial Hessian approximation matrix, a phenomenon further corroborated by our numerical experiments.



\end{abstract}

\newpage

\section{Introduction}\label{sec:introduction}

In this paper, we consider the unconstrained minimization problem
\begin{equation}\label{objective_function}
    \min_{x \in \mathbb{R}^d} f(x),
\end{equation}
where $f: \mathbb{R}^d \to \mathbb{R}$ is strongly convex and twice continuously differentiable. We focus on the non-asymptotic global convergence properties of quasi-Newton methods for solving Problem \eqref{objective_function}. The core idea behind quasi-Newton methods is to mimic the update of Newton's method using only first-order information, i.e., the gradients of $f$. Specifically, the update rule at the $k$-th iteration is 
\begin{equation}\label{eq:quasi_newton_update}
    x_{k+1} = x_k - \eta_k B_k^{-1} \nabla f(x_k), 
\end{equation} 
where $\eta_k$ is the step size and $B_k \in \reals^{d\times d}$ is a matrix constructed from the gradients of $f$ to approximate the Hessian $\nabla^2{f(x_k)}$. Various quasi-Newton methods have been developed, each distinguished by its strategy for constructing the Hessian approximation $B_k$ and its inverse. The key methods among them are the Davidon-Fletcher-Powell (DFP) method \cite{davidon1959variable,fletcher1963rapidly}, the Broyden-Fletcher-Goldfarb-Shanno (BFGS) method \cite{broyden1970convergence,fletcher1970new,goldfarb1970family,shanno1970conditioning}, the Symmetric Rank-One (SR1) method \cite{conn1991convergence,khalfan1993theoretical}, and the Broyden method \cite{broyden1965class}. Notably, these quasi-Newton methods directly maintain and update the inverse matrix $B_k^{-1}$ using a constant number of matrix-vector multiplications. This results in a computational cost of $\mathcal{O}(d^2)$ per iteration and thus makes quasi-Newton methods more efficient than Newton's method, which involves computing the Hessian and solving a linear system that could incur a computational cost of $\mathcal{O}(d^3)$ per iteration. 

Compared to other first-order methods, such as gradient descent and accelerated gradient descent, the primary advantage of quasi-Newton methods is their ability to achieve Q-superlinear convergence, i.e.,
\begin{equation}
    \lim_{k \to \infty}\frac{f(x_{k + 1}) - f(x_*)}{f(x_k) - f(x_*)} = 0\qquad \text{or}\qquad  \lim_{k \to \infty}\frac{\|x_{k + 1} - x_*\|}{\|x_k - x_*\|} = 0,
\end{equation}
where $x_* \in \mathbb{R}^d$ denotes the optimal solution of Problem~\eqref{objective_function}. Specifically, \cite{broyden1973local} and \cite{dennis1974characterization} have established that both DFP and BFGS converge Q-superlinearly with unit step size $\eta_k = 1$, where the initial point $x_0$ is required to be within a local neighborhood of the optimal solution $x_*$. Later, it has also been extended to various settings \cite{griewank1982local,dennis1989convergence,yuan1991modified,al1998global,li1999globally,yabe2007local,mokhtari2017iqn,gao2019quasi}. However, these local convergence results are all \emph{asymptotic} and fail to provide an explicit convergence rate after a finite number of iterations.

Recently, there has been progress regarding \emph{non-asymptotic} local convergence analysis of quasi-Newton methods. The authors of \cite{rodomanov2020rates} showed that, if the initial point $x_0$ is in a local neighborhood of the optimal solution $x_*$ and the initial Hessian approximation matrix $B_0$ is initialized as $L I$, then BFGS with unit step size attains a local superlinear convergence rate of the form $(\frac{d L}{\mu k})^k$, where $d$ is the problem's dimension, $L$ is the Lipschitz parameter of the gradient, and $\mu$ is the strong convexity parameter. Later in \cite{rodomanov2020ratesnew}, the local convergence rate of BFGS was improved to $(\frac{d\log{(L/\mu)}}{k})^k$ under similar initial conditions. Similar local superlinear convergence analysis has also been established for the SR1 method \cite{ye2023towards}. In a concurrent work \cite{qiujiang2020quasinewton}, it was demonstrated that, if $x_0$ is in a local neighborhood of the optimal solution $x_*$ and $B_0$ is sufficiently close to the exact Hessian at the optimal solution (or selected as the exact Hessian at $x_0$), then BFGS with unit step size achieves a local superlinear rate of $(1/k)^{k/2}$, which is independent of the dimension $d$ and the condition number $L/\mu$. While these non-asymptotic results successfully characterize an explicit superlinear rate, they rely heavily on local analysis, requiring the initial point to be sufficiently close to the optimal solution $x_*$ and imposing conditions on the step size and the initial Hessian approximation matrix $B_0$. Consequently, these results cannot be directly extended to a global convergence guarantee. We discuss this issue in detail in Section~\ref{sec:discussions}.

To guarantee global convergence, quasi-Newton methods must be combined with line search or trust-region techniques. The first global result for quasi-Newton methods was derived by Powell in~\cite{powell1971convergence}, where it was established that DFP with exact line search converges globally and Q-superlinearly. Later, Dixon~\cite{Dixon} proved that all quasi-Newton methods from the convex Broyden's class generate the same iterates using exact line search, thus extending Powell's result to the convex Broyden's class including BFGS. In order to relax the exact line search condition, the work in~\cite{Powell} considered BFGS using inexact line search based on Wolfe conditions and showed that it retains global superlinear convergence. This result was later extended in~\cite{byrd1987global} to the convex Broyden class except for DFP. Moreover,~\cite{conn1991convergence,khalfan1993theoretical,byrd1996analysis} showed that the SR1 method with trust-region techniques achieves global and superlinear convergence.

However, all these results lack an explicit global convergence rate; they only provide asymptotic convergence guarantees and fail to characterize the explicit global convergence rate of classical quasi-Newton methods. The only exception is a recent work in \cite{krutikov2023convergence}, where the authors also studied the global convergence rate of BFGS with exact line search. Specifically, it was shown that BFGS attains a global linear rate of $(1 - \frac{2\mu^3}{L^3} (1 + \frac{\mu \Tr(B_0^{-1})}{k})^{-1}(1 + \frac{\Tr(B_0)}{Lk})^{-1})^k$, where $\Tr(\cdot)$ denotes the trace of a matrix. We note that after $k = O(d)$ iterations, their linear rate approaches the rate of $(1-\frac{2\mu^3}{L^3})^k$, which is substantially slower than gradient descent-type methods. More importantly, their study does not extend to demonstrating any superlinear convergence rate and fails to fully characterize the behavior of BFGS.

The discussions above reveal a major gap in classical quasi-Newton methods: the lack of an explicit global convergence rate characterization.

\vspace{2mm}

\noindent\textbf{Contributions.} In this paper, we present the first results that contain explicit non-asymptotic global linear and superlinear convergence rates for the BFGS method with exact line search. Note that due to the equivalence result by Dixon \cite{Dixon}, our results also hold for other quasi-Newton methods in the convex Broyden class with exact line search. At a high level, our convergence analysis sharpens the potential function-based framework first introduced in~\cite{QN_tool}, leading to a unifying framework for proving both the global linear convergence rates and the superlinear convergence rates. Our convergence results are global as they hold for any initial point $x_0 \in \mathbb{R}^d$ and any initial Hessian approximation matrix $B_0$ that is symmetric positive definite. Specifically, our analysis divides the convergence process into three phases, characterized by different convergence rates:
\begin{enumerate}[(i)]
    \item First linear phase: We show that  
    \begin{equation*}
        \frac{f(x_k) - f(x_*)}{f(x_0) - f(x_*)} \leq \left(1 - e^{-\frac{\Psi(\bar{B}_0)}{k}}\frac{1}{\kappa}\max\left\{\frac{2}{1 + \sqrt{\kappa}}, \frac{1}{1 + C_0}\right\}\right)^{k}.
    \end{equation*}
    {Here, $\bar{B}_0 = \frac{1}{L} B_0$ is the scaled initial Hessian approximation matrix, $\Psi(\cdot)$ is a potential function defined later in \eqref{potential_function}, $\kappa = \frac{L}{\mu}$ denotes the condition number, and $C_0 = \frac{2M\sqrt{2(f(x_0) - f(x_*))}}{\mu^{{3}/{2}}}$ is based on the initial optimality gap with $M$ as the Hessian's Lipschitz parameter. In particular, when $k \geq \Psi(\bar{B}_0)$, this leads to a linear rate of }
    \begin{equation*}
        \frac{f(x_k) - f(x_*)}{f(x_0) - f(x_*)} \leq \left(1 - \frac{1}{3\kappa}\max\left\{\frac{2}{1 + \sqrt{\kappa}}, \frac{1}{1 + C_0}\right\}\right)^{k}. 
    \end{equation*}
    \item Second linear phase: Upon reaching $k \geq (1+C_0) \Psi(\bar{B}_0) + 3C_0 \kappa \min\{2(1 + C_0), 1+\sqrt{\kappa}\}$, the algorithm attains an improved linear rate matching that of standard gradient descent:  
    $$\frac{f(x_k) - f(x_*)}{f(x_0) - f(x_*)} \leq \left(1-\frac{1}{3\kappa}\right)^k.$$ 
    \item Superlinear phase: when $ k \geq \Psi(\Tilde{B}_{0}) + 4C_0\Psi(\bar{B}_0) + 12C_0 \kappa\min\{2(1 + C_0), 1 +\sqrt{\kappa}\}$, BFGS achieves a superlinear convergence rate of 
    \begin{equation*}
        \frac{f(x_k) - f(x_*)}{f(x_0) - f(x_*)} \leq \left(\frac{\Psi(\Tilde{B}_{0}) + 4C_0\Psi(\bar{B}_0) + 12C_0 \kappa\min\{2(1 + C_0), 1 +\sqrt{\kappa}\}}{k}\right)^{k},
    \end{equation*}
    where $\tilde{B}_0 = \nabla^2f(x_*)^{-\frac{1}{2}}B_0\nabla^2f(x_*)^{-\frac{1}{2}}$ is the normalized initial Hessian approximation matrix. 
\end{enumerate}

\begin{table}[t]
  \centering
  \caption{Summary of our convergence results. The last column presents the number of iterations required to achieve the corresponding linear or superlinear convergence phase. For brevity, we drop absolute constants in our results.}
  \vspace{2mm}
  \label{tab:comparison}
  \begin{tabular}{ |c|c|c|c| }
    \hline
    $B_0$ & Convergence Phase & Convergence Rate & Starting moment \\
    \hline
    \hline
    $\alpha I$ & Linear phase I & $ \left(1-\frac{1}{\kappa \min\{1 + C_0, \sqrt{\kappa}\}}\right)^{k}$ & $d(\frac{\alpha}{L} - 1 + \log{\frac{L}{\alpha}})$ \\
    \hline
    $\alpha I$ & Linear phase II & $\left(1-\frac{1}{\kappa}\right)^{k}$  & \begin{tabular}{@{}c@{}} $ (1 + C_0)d(\frac{\alpha}{L} - 1 + \log{\frac{L}{\alpha}})$ \\ $ + C_0\kappa\min\{1 + C_0, \sqrt{\kappa}\}$ \end{tabular} \\
    \hline
    $\alpha I$ & Superlinear phase & \begin{tabular}{@{}c@{}}$  (d(\frac{\alpha}{\mu} - 1 + \log{\frac{L}{\alpha}})/k $ \\ $+ C_0d(\frac{\alpha}{L} - 1 + \log{\frac{L}{\alpha}})/k $ \\ $ + C_0 \kappa \min\{1 + C_0, \sqrt{\kappa}\}/k)^{k} $\end{tabular} & \begin{tabular}{@{}c@{}}$  d(\frac{\alpha}{\mu} - 1 + \log{\frac{L}{\alpha}}) $ \\ $+ C_0d(\frac{\alpha}{L} - 1 + \log{\frac{L}{\alpha}})  $ \\ $ + C_0 \kappa \min\{1 + C_0, \sqrt{\kappa}\}$\end{tabular} \\
    \hline
    $L I$ & Linear phase I & $ \left(1-\frac{1}{\kappa \min\{1 + C_0, \sqrt{\kappa}\}}\right)^{k}$ & $1$ \\
    \hline
    $L I$ & Linear phase II & $\left(1-\frac{1}{\kappa}\right)^{k}$  & $ C_0 \kappa \min\{1 + C_0, \sqrt{\kappa}\}$ \\
    \hline
    $L I$ & Superlinear phase & $\left(\frac{d\kappa + C_0\kappa\min\{1 + C_0, \sqrt{\kappa}\}}{k}\right)^{k}$ & \begin{tabular}{@{}c@{}}$  d\kappa + $ \\ $ C_0 \kappa \min\{1 + C_0, \sqrt{\kappa}\}$\end{tabular} \\
    \hline
    $\mu I$ & Linear phase I & $\left(1-\frac{1}{\kappa \min\{1 + C_0, \sqrt{\kappa}\}}\right)^{k}$ & $ d\log\kappa$ \\
    \hline
    $\mu I$ & Linear phase II & $\left(1-\frac{1}{\kappa}\right)^{k}$  & \begin{tabular}{@{}c@{}}$ (1 + C_0)d\log\kappa + $ \\ $C_0 \kappa \min\{1 + C_0, \sqrt{\kappa}\}$\end{tabular} \\
    \hline
    $\mu I$ & Superlinear phase &\!\! $\!\!\left(\frac{(1 + C_0)d\log\kappa + C_0\kappa\min\{1 + C_0, \sqrt{\kappa}\}}{k}\right)^{k}\!\!$ & \begin{tabular}{@{}c@{}}$  (1 + C_0)d\log\kappa + $ \\ $C_0 \kappa \min\{1 + C_0, \sqrt{\kappa}\}$\end{tabular} \\
    \hline
    $c I$ & Linear phase I & $ \left(1-\frac{1}{\kappa \min\{1 + C_0, \sqrt{\kappa}\}}\right)^{k}$ & $d\log\kappa$ \\
    \hline
    $c I$ & Linear phase II & $\left(1-\frac{1}{\kappa}\right)^{k}$  & \begin{tabular}{@{}c@{}}$  (1 + C_0)d\log\kappa + $ \\ $C_0 \kappa \min\{1 + C_0, \sqrt{\kappa}\}$\end{tabular} \\
    \hline
    $c I$ & Superlinear phase & $\left(\frac{d\kappa + C_0 d\log\kappa + C_0\kappa\min\{1 + C_0, \sqrt{\kappa}\}}{k}\right)^{k}$ & \begin{tabular}{@{}c@{}}$  d\kappa + C_0 d \log\kappa $ \\ $ C_0 \kappa \min\{1 + C_0, \sqrt{\kappa}\}$\end{tabular} \\
    \hline
  \end{tabular}
\end{table}

To make our convergence rates easily interpretable, we focus on the global linear and superlinear convergence rates of the special case where $B_0 = \alpha I$ for a given scalar $\alpha > 0$. We also study the practical initialization, $B_0 = cI$,  where $c = \frac{s^\top y}{\|s\|^2}$ with \( s = x_2 - x_1 \), \( y = \nabla f(x_2) - \nabla f(x_1) \), and \( x_1 \), \( x_2 \) being two randomly chosen vectors. We further consider $B_0 = LI$ and $B_0 = \mu I$ as two specific cases. The global convergence results with these initializations are summarized in Table~\ref{tab:comparison}. Our analysis reveals a trade-off between the linear and the superlinear rates, depending on the choice of the initial matrix $B_0$. Specifically, while both initializations lead to the same linear convergence rates, initiating with $B_0 = L I$ allows the algorithm to reach this rate $d \log \kappa$ iterations earlier than with $B_0 = \mu I$. On the other hand, for the superlinear convergence phase, the difference between $B_0 = L I$ and $B_0 = \mu I$ essentially boils down to comparing $d\kappa$ against $(1+4C_0)d\log \kappa$.  Thus, when $C_0  \ll \kappa$, initializing with $B_0 = \mu I$ enables an earlier transition to the superlinear convergence compared to $B_0 = L I$, as well as a faster superlinear convergence rate. As we shall see in Section~\ref{sec:experiments}, our experiments also demonstrate this trade-off.
\vspace{2mm}

\noindent\textbf{Additional related work.} In addition to the standard quasi-Newton methods such as BFGS, the superlinear convergence of other variants of quasi-Newton methods has also been studied in the literature. The greedy variants of quasi-Newton methods were first introduced in \cite{rodomanov2020greedy} and developed in subsequent works \cite{lin2021greedy,lin2022explicit,ji2023greedy}. 
Instead of using the difference of successive iterates to update the Hessian approximation matrix, the key idea is to greedily select basis vectors to maximize a certain measure of progress. In \cite{rodomanov2020greedy}, greedy BFGS is shown to achieve a local superlinear convergence rate of $(d\kappa(1 - \frac{1}{d\kappa})^\frac{k}{2})^{k}$ and the superlinear convergence phase begins after $d\kappa\ln{(d\kappa)}$ iterations. Similar superlinear convergence rates are extended to other greedy quasi-Newton updates in \cite{lin2021greedy,lin2022explicit,ji2023greedy}.  However, we note that their results are all local and require the initial point to be sufficiently close to the optimal solution $x_*$. Recently, along a different line of work, the authors in \cite{jiang2023online,jiang2023accelerated} proposed quasi-Newton-type methods based on the hybrid proximal extragradient framework~\cite{solodov1999hybrid,monteiro2010complexity} and studied their global convergence rates. Specifically, it was shown that the quasi-Newton proximal extragradient method in \cite{jiang2023online} achieves a global linear convergence rate of $(1-{1}/{\kappa})^k$ and a global superlinear rate of the form $(1+\sqrt{k/\bigO(\kappa^2 d)})^{-k}$. However, these methods are distinct from the classical quasi-Newton methods such as BFGS analyzed in this paper, since they formulate the update of the Hessian approximation matrices $B_k$ as an online convex optimization problem and follow an online learning algorithm to update $B_k$.

\vspace{2mm}

\noindent\textbf{Outline.} In Section~\ref{sec:preliminaries}, we provide an overview of the BFGS method with exact line search, outline our assumptions, and introduce some preliminary lemmas for the exact line search scheme. Section~\ref{sec:basic} presents our general analytical framework, which is employed to establish global linear and superlinear convergence results for the BFGS method, along with the intermediate results for the update of quasi-Newton methods. {In Section~\ref{sec:linear}, we establish the global linear convergence rate of BFGS using exact line search that applies to any choices of $B_0$ and $x_0$ and we consider specific initializations with $B_0 = \alpha I$, $B_0 = cI$, $B_0=LI$ and $B_0=\mu I$.} Building on the linear convergence rates, Section~\ref{sec:superlinear} details our global superlinear convergence results. In Section~\ref{sec:discussions}, we compare our analytical framework to both classical asymptotic analysis and recent local non-asymptotic analysis of BFGS. Section~\ref{sec:experiments} displays our numerical experiments that corroborate our theoretical findings. Finally, we finish the paper by presenting some concluding remarks in Section~\ref{sec:conclusion}.

\vspace{2mm}

\noindent\textbf{Notation.} We use $\|\cdot\|$ to denote the $\ell_2$-norm of a vector or the spectral norm of a matrix. We denote $\mathbb{S}^d_{+}$ and $\mathbb{S}^d_{++}$ as the set of symmetric positive semidefinite and symmetric positive definite matrices with dimension $d \times d$, respectively. Given two symmetric matrices $A$ and $B$, we denote $A \preceq B$ if and only if $B - A$ is positive semidefinite. Given a matrix $A$, we use $\Tr(A)$ and $\Det(A)$ to denote its trace and determinant, respectively. 

\section{Preliminaries}\label{sec:preliminaries}

In this section, we first outline the assumptions, notations, and lemmas essential for our convergence proof. Following this, we explore the general framework of quasi-Newton methods incorporating exact line search and provide an overview of the principal concepts underpinning the update mechanism in the convex Broyden’s class of quasi-Newton methods, which encompasses both the BFGS and DFP algorithms.

\subsection{Assumptions}

To begin with, we state our assumptions on the objective functions $f$. 
\begin{assumption}\label{ass_str_cvx}
The objective function $f$ is strongly convex with parameter $\mu > 0$, i.e.,  $\|\nabla{f(x)} - \nabla{f(y)}\| \geq \mu\| x - y\|$, for any $x, y \in \mathbb{R}^{d}$.
\end{assumption}
\begin{assumption}\label{ass_smooth}
The objective function gradient $\nabla f$ is Lipschitz continuous with parameter $L > 0$, i.e., $\|\nabla{f(x)} - \nabla{f(y)}\| \leq L\| x - y\|$ for any $x, y \in \mathbb{R}^{d}$.
\end{assumption}
Both Assumptions~\ref{ass_str_cvx} and~\ref{ass_smooth} are standard in the convergence analysis of first-order methods. Moreover, since $f$ is twice differentiable, they imply that $\mu I \preceq \nabla^2{f(x)} \preceq LI$ for any $x \in \mathbb{R}^{d}$.  Additionally, the condition number of $f$ is defined as $\kappa := \frac{L}{\mu}$. We also remark that Assumptions~\ref{ass_str_cvx} and~\ref{ass_smooth} are sufficient in our analysis to prove a global linear convergence rate of BFGS with exact line search. In order to achieve a superlinear convergence rate, we need to impose an additional assumption on the Hessian of the function $f$, stated below.  

\begin{assumption}\label{ass_Hess_lip}
The objective function Hessian $\nabla^2 f$ is Lipschitz continuous along the direction of optimal solution $x_*$ with parameter $M > 0$, i.e., $\|\nabla^{2}{f(x)} - \nabla^{2}{f(x_*)}\| \leq M\|x - x_*\|$ for any $x \in \mathbb{R}^{d}$.
\end{assumption}
Assumption~\ref{ass_Hess_lip} is commonly used in the analysis of quasi-Newton methods, such as in \cite{QN_tool}, as it provides a necessary smoothness condition for the Hessian of the objective function. Importantly, we do not require Hessian smoothness for arbitrary points \( x, y \in \mathbb{R}^d \); rather, we impose a weaker condition, assuming that the Hessian is Lipschitz continuous only along the direction of the optimal solution \( x_* \).

\subsection{Quasi-Newton methods with exact line search}

{Next, we briefly review the template for updating quasi-Newton matrices, focusing specifically on the DFP and BFGS algorithms. }Specifically, at the $k$-th iteration, the update in \eqref{eq:quasi_newton_update} can be equivalently written as 
\begin{equation}\label{QN_update}
    x_{k+1} = x_k + \eta_k d_k, \qquad  \text{where }\; d_k = - B_k^{-1} g_k \quad \text{and} \quad g_k = \nabla{f(x_k)}.
\end{equation}
Here, $\eta_k \geq 0$ represents the step size, and $B_k \in \mathbb{R}^{d\times d}$ is the Hessian approximation matrix. Replacing $B_k$ with the exact Hessian $\nabla^2f(x_k)$ turns the update into the classical Newton's method. Quasi-Newton methods aim to approximate the Hessian with first-order information, typically adhering to a \emph{secant condition} and a \emph{least-change property}. To elaborate, we define the variable difference $s_k$ and gradient difference $y_k$ as  
\begin{equation}\label{difference}
      s_k := x_{k+1} - x_k, \qquad y_k := \nabla f(x_{k+1}) - \nabla f(x_k).
\end{equation}
The secant condition requires that $B_{k+1}$ satisfies $y_{k} = B_{k+1} s_k$, ensuring the gradient consistency between the quadratic model $h_{k+1}(x) = f(x_{k+1}) + g_{k+1}^\top (x-x_{k+1}) + \frac{1}{2}(x-x_{k+1})^\top B_{k + 1} (x-x_{k+1})$ and $f$ at $x_{k}$ and $x_{k+1}$; that is, $\nabla h_{k+1}(x_k) = \nabla f(x_k)$ and $\nabla h_{k+1}(x_{k+1}) = \nabla f(x_{k+1})$ (see \cite[Chapter 6]{nocedal2006numerical}). That said, the secant condition does not uniquely define $B_{k+1}$. Thus, we impose a least-change property to ensure that $B_{k+1}$, satisfying the secant condition, is closest to $B_k$ in a specific proximity measure. Various proximity measures have been proposed in the literature \cite{goldfarb1970family,greenstadt1970variations,fletcher1991new} and here we follow the variational characterization in~\cite{fletcher1991new}. Specifically, for any symmetric positive definite matrix $A \in \mathbb{S}^d_{++}$, define the negative log-determinant function $\Phi(A) = -\log \Det (A)$ and define the Bregman divergence generated by $\Phi$ by
\begin{equation}\label{eq:Bregman}
\begin{split}
    D_{\Phi}(A,B) & := \Phi(A) - \Phi(B) - \langle \nabla \Phi(B), A-B \rangle \\
    & \phantom{:}= \Tr(B^{-1}A) - \log\Det(B^{-1}A) - d.
\end{split}
\end{equation}
Note that the Bregman divergence can be regarded as a measure of proximity between two positive definite matrices, and  $D_{\Phi}(A,B) = 0$ if and only if $A = B$. For the BFGS update, it was shown in~\cite{fletcher1991new} that $B_{k+1}$ is given as the unique solution of the minimization problem: 
\begin{equation*}
    \min_{B \in \mathbb{S}^d_{++}}\; D_{\Phi}(B; B_k ) \quad \text{s.t.}\quad y_k = B s_k, 
\end{equation*}
which admits the following explicit update rule:
\begin{equation}\label{BFGS_Hessian_update}
    B^{\text{BFGS}}_{k+1} := B_k - \frac{B_k s_k s_k^\top B_k}{s_k^\top B_k s_k} + \frac{y_k y_k^\top}{s_k^\top y_k}.
\end{equation} 
{Moreover, if we define $H_k := B_k^{-1}$ as the inverse of the Hessian approximation matrix, it follows from the Sherman-Morrison-Woodbury formula that }
\begin{equation}\label{BFGS_Hessian_inverse_update}
    H^{\text{BFGS}}_{k+1} := \left(I-\frac{s_k y_k^\top}{y_k^\top s_k}\right) H_k \left(I-\frac{ y_k s_k^\top}{s_k^\top y_k}\right) +\frac{s_k s_k^\top}{y_k^\top s_k}.
\end{equation}
The DFP update rule can be regarded as the dual of BFGS, where the roles of the Hessian approximation matrix $B_{k+1}$ and its inverse $H_{k+1}$ are exchanged. Specifically, the DFP update rules are given by 
\begin{align*}
    & B^{\text{DFP}}_{k+1} := \left(I-\frac{y_k s_k^\top}{y_k^\top s_k}\right) B_k \left(I-\frac{ s_ky_k^\top}{s_k^\top y_k}\right) +\frac{y_k y_k^\top}{y_k^\top s_k}, \\
    & H^{\text{DFP}}_{k+1} := H_k - \frac{H_k y_k y_k^\top H_k}{y_k^\top H_k y_k} + \frac{s_k s_k^\top}{s_k^\top y_k}.
\end{align*}
Both BFGS and DFP belong to a more general class of quasi-Newton methods, known as the convex Broyden's class~\cite{broyden1967quasi}. In this class, the Hessian approximation matrix $B_{k + 1}$ is defined as 
\begin{equation*}
    B_{k+1} := \phi_k B^{\text{DFP}}_{k+1} + (1 - \phi_k) B^{\text{BFGS}}_{k+1},
\end{equation*}
where $\phi_k \in [0,1]$ for any $k \geq 0$. Accordingly, there exists $\psi_k \in [0,1]$ such that the Hessian inverse approximation matrix $H_{k + 1}$ is given by
\begin{equation*}
    H_{k+1} := (1 - \psi_k) H^{\text{DFP}}_{k+1} + \psi_k H^{\text{BFGS}}_{k+1}.
\end{equation*}
The convex Broyden's class exhibits a crucial property: if the initial Hessian approximation matrix $B_0$ is symmetric positive definite and the objective function $f$ is strictly convex, then all subsequent $B_k$ matrices produced by this class maintain symmetric positive definiteness (see \cite{nocedal2006numerical}).

To guarantee the global convergence of quasi-Newton methods in \eqref{QN_update}, it is necessary to employ a line search scheme to select the step size $\eta_k$. In this paper, our primary focus is on the exact line search step size, where we aim to minimize the objective function along the search direction $d_k$. Specifically,
\begin{equation}\label{exact_line_search}
    \eta_k := \argmin_{\eta \geq 0} f(x_k + \eta d_k).
\end{equation}
Remarkably, it was shown in \cite{Dixon} that, when employing the exact line search scheme, the convex Broyden's class of quasi-Newton methods produce identical iterates given that the initial point $x_0$ and the initial matrix $B_0$ are the same. Thus, in the remainder of the paper, we focus on the BFGS update in \eqref{BFGS_Hessian_update} as all results hold for other algorithms in the convex Broyden family.

Finally, we introduce some intermediate results related to the exact line search step size, as defined in \eqref{exact_line_search}. These standard results (see, e.g., \cite{powell1971convergence}) are essential for the forthcoming demonstration of the convergence rate of the quasi-Newton method.

\begin{lemma}\label{lemma_line_search}
    Consider the standard quasi-Newton method in \eqref{QN_update} with the exact line search specified in \eqref{exact_line_search}. The following results hold for any $k \geq 0$: 
    \begin{enumerate}[(a)] 
        \item $f(x_{k+1}) \leq f(x_k)$. 
        \item \label{item:gs=0} $g_{k + 1}^\top s_k = 0$ and $y_k^{\top} s_k = - g_k^{\top} s_k$. 
    \end{enumerate}
\end{lemma}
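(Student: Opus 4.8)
The plan is to reduce both statements to the one-dimensional function $\phi_k(\eta) := f(x_k + \eta d_k)$, which by \eqref{exact_line_search} is minimized at $\eta = \eta_k$ over $\eta \ge 0$. The first thing I would record is that $d_k = -B_k^{-1} g_k$ is a descent direction. Since $B_0 \in \mathbb{S}^d_{++}$ and $f$ is strictly convex, the convex Broyden class property recalled above guarantees $B_k \in \mathbb{S}^d_{++}$, hence $B_k^{-1} \in \mathbb{S}^d_{++}$, so that $\phi_k'(0) = g_k^\top d_k = -g_k^\top B_k^{-1} g_k \le 0$, with strict inequality whenever $g_k \neq 0$.

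For part (a), I would simply note that $\eta = 0$ is feasible in \eqref{exact_line_search}, so $f(x_{k+1}) = \phi_k(\eta_k) \le \phi_k(0) = f(x_k)$; no further argument is needed.

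For part (b), the key point is the first-order optimality condition for \eqref{exact_line_search}. Since $f$ is strongly convex with parameter $\mu$ and twice continuously differentiable, $\phi_k$ is $C^1$ with $\phi_k''(\eta) = d_k^\top \nabla^2 f(x_k + \eta d_k) d_k \ge \mu \|d_k\|^2 \ge 0$, so $\phi_k$ is convex (strongly convex when $d_k \neq 0$). Hence $\phi_k$ has a unique unconstrained minimizer, and since $\phi_k'(0) \le 0$ this minimizer lies in $[0,\infty)$ and coincides with $\eta_k$; consequently $\phi_k'(\eta_k) = 0$, i.e. $\nabla f(x_k + \eta_k d_k)^\top d_k = g_{k+1}^\top d_k = 0$. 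Multiplying by $\eta_k \ge 0$ and using $s_k = x_{k+1} - x_k = \eta_k d_k$ yields $g_{k+1}^\top s_k = 0$; the degenerate case $g_k = 0$ gives $d_k = 0$, $s_k = 0$, and the identity holds trivially. Finally, from $y_k = g_{k+1} - g_k$ together with what was just shown, $y_k^\top s_k = g_{k+1}^\top s_k - g_k^\top s_k = -g_k^\top s_k$, which is the second identity.

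I do not anticipate a genuine obstacle here; the only points requiring care are justifying that the constrained and unconstrained one-dimensional minimizers coincide (handled via $\phi_k'(0) \le 0$ and convexity) and treating the stationary case $g_k = 0$ separately so the identities remain valid when $s_k = 0$.
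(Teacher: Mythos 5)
Your proof is correct and follows essentially the same route as the paper: reduce to the one-dimensional function $\eta \mapsto f(x_k+\eta d_k)$, use feasibility of $\eta=0$ for part (a), and the stationarity of $\eta_k$ for part (b). The only difference is that you additionally justify why $\phi_k'(\eta_k)=0$ holds despite the constraint $\eta\ge 0$ (via positive definiteness of $B_k$, so $d_k$ is a descent direction, plus convexity of $\phi_k$) and treat the case $g_k=0$ separately, whereas the paper asserts the first-order condition directly; this is a welcome tightening but not a different argument.
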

    

\section{Convergence analysis framework}\label{sec:basic}

In this section, we introduce our theoretical framework for establishing the global convergence rates of the BFGS algorithm with exact line search. Our framework builds on two key propositions. In Proposition~\ref{lemma_bound}, we characterize the amount of function value decrease in one iteration in terms of the angle $\theta_k$ between the steepest descent direction $-g_k$ and the search direction $d_k$ given in \eqref{QN_update}. Subsequently, Proposition~\ref{lemma_BFGS} presents a potential function for the BFGS update,  which leads to a lower bound on $\cos(\theta_k)$.

To formally begin the analysis, we first present a weighted version of key vectors and matrices as introduced from the previous work \cite{dennis1974characterization}. Specifically, given a weight matrix $P \in \mathbb{S}_{++}^d$ (also referred to as a transformation matrix), we define the weighted gradient $\hat{g}_k$, the weighted gradient difference $\hat{y}_k$, and the weighted iterate difference $\hat{s}_k$ as
\begin{equation}\label{weighted_vector}
    \hat{g}_k = P^{-\frac{1}{2}}g_k, \qquad \hat{y}_k = P^{-\frac{1}{2}} y_k, \qquad \hat{s}_k = P^{\frac{1}{2}}s_k.
\end{equation} 
Similarly, we define the weighted Hessian approximation matrix $\hat{B}_k$ as 
\begin{equation}\label{weighted_matrix_1}
    \hat{B}_k = P^{-\frac{1}{2}} {B}_k  P^{-\frac{1}{2}}.
\end{equation}
Note that the weight matrix $P$ can be chosen as any positive definite matrix, and its choice will be evident from the context.  In particular, as we shall see later, we use $P = L I$ in Section~\ref{sec:linear} to prove the global linear convergence rate, and use $P = \nabla^2 f(x_*)$ in Section~\ref{sec:superlinear} to prove the global superlinear convergence rate. Moreover, since the above weighting procedure amounts to a change of the coordinate system, the weighted versions of the vectors and matrices defined in \eqref{weighted_vector} and \eqref{weighted_matrix_1} retain the same algebraic relations as their original forms. In particular, the weighted Hessian approximation matrices generated by the BFGS algorithm follow the subsequent update rule:
\begin{equation}\label{BFGS_weighted}
    \hat{B}_{k+ 1} = \hat{B}_k - \frac{\hat{B}_k \hat{s}_k \hat{s}_k^\top \hat{B}_k}{\hat{s}_k^\top \hat{B}_k \hat{s}_k} + \frac{\hat{y}_k \hat{y}_k^\top}{\hat{s}_k^\top \hat{y}_k}. 
\end{equation}
Before introducing our first key proposition, we define a quantity $\hat{\theta}_k$ by  
\begin{equation}\label{eq:theta}
    \cos(\hat{\theta}_k) = \frac{-\hat{g}_k^\top \hat{s}_k}{\|\hat{g}_k\|\|\hat{s}_k\|},
\end{equation}
which is the angle between the weighted steepest descent direction $-\hat{g}_k$ and the weighted iterate difference $\hat{s}_k$. It is well-known that the convergence of quasi-Newton methods can be established by monitoring the behavior of $\cos(\hat{\theta}_k)$. We next quantify the link between functional value decrease and $\cos(\hat{\theta}_k)$.

\begin{proposition}\label{lemma_bound}
    Let $\{x_k\}_{k\geq 0}$ be the iterates generated by the BFGS method with exact line search. Given a weight matrix $P\in \mathbb{S}^d_{++}$, recall the weighted vectors and matrices defined in \eqref{weighted_vector} and \eqref{weighted_matrix_1}.  For any $k \geq 0$, we have
    \begin{equation}\label{eq:one_step_bound}
        f(x_{k+1}) -f(x_*) = \left(1-\frac{\hat{\alpha}_{k} \hat{q}_{k}}{\hat{m}_{k}} \cos^2(\hat{\theta}_{k}) \right)(f(x_{k})-f(x_*)),  
    \end{equation}
    where we define 
    \begin{equation}\label{eq:def_alpha_q_m}
        \hat{\alpha}_k := \frac{f(x_k)-f(x_{k+1})}{-\hat{g}_k^\top \hat{s}_k}, \qquad \hat{q}_k := \frac{\|\hat{g}_k\|^2}{f(x_k)-f(x_*)}, \qquad \hat{m}_k := \frac{\hat{y}_k^\top \hat{s}_k}{\|\hat{s}_k\|^2}. 
    \end{equation}
    As a corollary, we have that for any $k \geq 1$,
    \begin{equation}\label{eq:product}
        \frac{f(x_{k}) - f(x_*)}{f(x_0) - f(x_*)} \leq \left[1 - \left(\prod_{i = 0}^{k-1} \frac{\hat{\alpha}_i \hat{q}_i}{\hat{m}_i} \cos^2(\hat{\theta}_i)\right)^{\frac{1}{k}}\right]^{k}.
    \end{equation}
\end{proposition}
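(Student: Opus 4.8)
The plan is to first establish the identity \eqref{eq:one_step_bound} for a single step, and then obtain \eqref{eq:product} by telescoping together with the AM--GM inequality. For the single-step identity, I would start from the exact line search progress. Since $x_{k+1} = x_k + \eta_k d_k$ and $s_k = \eta_k d_k$, the numerator of $\hat\alpha_k$ is just the function decrease $f(x_k) - f(x_{k+1})$, and I can rewrite
\begin{equation*}
    f(x_k) - f(x_{k+1}) = \hat\alpha_k \cdot (-\hat g_k^\top \hat s_k) = \hat\alpha_k \,\|\hat g_k\|\,\|\hat s_k\|\cos(\hat\theta_k),
\end{equation*}
using the definition \eqref{eq:theta} of $\hat\theta_k$. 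The idea is then to massage $\|\hat g_k\|\,\|\hat s_k\|$ into a multiple of $f(x_k) - f(x_*)$ times another $\cos(\hat\theta_k)$ factor: write $\|\hat g_k\| \|\hat s_k\| \cos(\hat\theta_k) = \frac{\|\hat g_k\|^2}{\|\hat g_k\|}\,\|\hat s_k\|\cos(\hat\theta_k)$, and note that $-\hat g_k^\top \hat s_k = \|\hat g_k\|\|\hat s_k\|\cos(\hat\theta_k)$, so $\|\hat s_k\| \cos(\hat\theta_k) = \frac{-\hat g_k^\top \hat s_k}{\|\hat g_k\|}$. Combining,
\begin{equation*}
    f(x_k) - f(x_{k+1}) = \hat\alpha_k\, \frac{\|\hat g_k\|^2 \,(-\hat g_k^\top \hat s_k)}{\|\hat g_k\|^2} \cdot \frac{1}{\ }\cdots
\end{equation*}
— more cleanly, I would introduce $\hat q_k$ and $\hat m_k$ deliberately: observe $(-\hat g_k^\top\hat s_k) = \|\hat g_k\|^2\|\hat s_k\|^2 \cos^2(\hat\theta_k)/(-\hat g_k^\top\hat s_k)$, and that by Lemma~\ref{lemma_line_search}\eqref{item:gs=0} we have $\hat y_k^\top \hat s_k = -\hat g_k^\top \hat s_k$ (the weighting preserves this since $\hat y_k^\top \hat s_k = y_k^\top s_k$), hence $-\hat g_k^\top \hat s_k = \hat m_k \|\hat s_k\|^2$. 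Substituting,
\begin{equation*}
    f(x_k) - f(x_{k+1}) = \hat\alpha_k \cdot \frac{\|\hat g_k\|^2 \cos^2(\hat\theta_k)}{\hat m_k} = \frac{\hat\alpha_k \hat q_k}{\hat m_k}\cos^2(\hat\theta_k)\,(f(x_k) - f(x_*)),
\end{equation*}
where the last equality uses the definition of $\hat q_k$. Rearranging $f(x_{k+1}) - f(x_*) = (f(x_k) - f(x_*)) - (f(x_k) - f(x_{k+1}))$ gives exactly \eqref{eq:one_step_bound}.

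For the corollary, I would take the product of \eqref{eq:one_step_bound} over $i = 0, \dots, k-1$, yielding
\begin{equation*}
    \frac{f(x_k) - f(x_*)}{f(x_0) - f(x_*)} = \prod_{i=0}^{k-1}\left(1 - \frac{\hat\alpha_i \hat q_i}{\hat m_i}\cos^2(\hat\theta_i)\right).
\end{equation*}
By Lemma~\ref{lemma_line_search}(a) each factor lies in $[0,1]$ (function values are nonincreasing and bounded below by $f(x_*)$, so each ratio $\frac{f(x_{i+1})-f(x_*)}{f(x_i)-f(x_*)}\in[0,1]$), so I may apply the AM--GM inequality to the $k$ nonnegative numbers $1 - \frac{\hat\alpha_i\hat q_i}{\hat m_i}\cos^2(\hat\theta_i)$: their geometric mean is at most their arithmetic mean, i.e.
\begin{equation*}
    \left(\prod_{i=0}^{k-1}\left(1 - \frac{\hat\alpha_i \hat q_i}{\hat m_i}\cos^2(\hat\theta_i)\right)\right)^{1/k} \le 1 - \frac{1}{k}\sum_{i=0}^{k-1}\frac{\hat\alpha_i \hat q_i}{\hat m_i}\cos^2(\hat\theta_i),
\end{equation*}
and then bound $\frac{1}{k}\sum_i \frac{\hat\alpha_i\hat q_i}{\hat m_i}\cos^2(\hat\theta_i) \ge \left(\prod_i \frac{\hat\alpha_i\hat q_i}{\hat m_i}\cos^2(\hat\theta_i)\right)^{1/k}$ by AM--GM again (all terms nonnegative). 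Chaining these and raising to the $k$-th power gives \eqref{eq:product}.

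The computation is essentially bookkeeping; the one place to be careful is the algebraic identity in the first part — making sure the $\cos^2(\hat\theta_k)$ appears with the right power and that the $\hat m_k$ in the denominator is justified via $-\hat g_k^\top \hat s_k = \hat y_k^\top \hat s_k = \hat m_k\|\hat s_k\|^2$, which hinges on Lemma~\ref{lemma_line_search}\eqref{item:gs=0} surviving the $P$-weighting (it does, because $\hat y_k^\top \hat s_k = y_k^\top P^{-1/2}P^{1/2}s_k = y_k^\top s_k$). A second subtlety is ensuring every factor $1 - \frac{\hat\alpha_i\hat q_i}{\hat m_i}\cos^2(\hat\theta_i)$ is genuinely in $[0,1]$ so that AM--GM applies and the final bound is vacuously true (rather than false) when some factor vanishes; this follows from part (a) of Lemma~\ref{lemma_line_search} since the product of the first $k$ factors equals $\frac{f(x_k)-f(x_*)}{f(x_0)-f(x_*)}$, which combined with monotonicity forces each partial ratio into $[0,1]$. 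I do not anticipate any serious obstacle beyond this careful bookkeeping.
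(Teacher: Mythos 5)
Your proposal is correct and follows essentially the same route as the paper's proof: the one-step identity is obtained by combining the definition of $\hat{\alpha}_k$ with the exact line search relation $-\hat{g}_k^\top \hat{s}_k = \hat{y}_k^\top \hat{s}_k = \hat{m}_k\|\hat{s}_k\|^2$ and the definitions of $\cos(\hat{\theta}_k)$ and $\hat{q}_k$, and the corollary follows by telescoping and two applications of AM--GM, exactly as in the paper. The only differences are cosmetic reorganizations of the algebra, and your explicit checks (invariance of inner products under the $P$-weighting, nonnegativity of each factor) are points the paper also relies on.
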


\begin{proof}
    First, we use the definition of $ \hat{\alpha}_k$ in \eqref{eq:def_alpha_q_m} to write
    \begin{equation}\label{eq:rewrite_alpha}
        f(x_k) - f(x_{k+1})  = -\hat{\alpha}_k \hat{g}_k^\top  \hat{s}_k = -\hat{\alpha}_k \frac{\hat{g}_k^\top  \hat{s}_k}{\|\hat{g}_k\|^2} \|\hat{g}_k\|^2.
    \end{equation}
    Moreover, note that we have $-\hat{g}_k^\top  \hat{s}_k = \hat{y}_k^\top \hat{s}_k$ by Lemma~\ref{lemma_line_search}(b). Hence, using the definition of $\hat{\theta}_k$ in \eqref{eq:theta} and the definition of $\hat{m}_k$ in \eqref{eq:def_alpha_q_m}, it follows that 
    \begin{equation*}
        \frac{-\hat{g}_k^\top  \hat{s}_k}{\|\hat{g}_k\|^2} = \frac{(\hat{g}_k^\top  \hat{s}_k)^2}{\|\hat{g}_k\|^2\|\hat{s}_k\|^2} \frac{\|\hat{s}_k\|^2}{-\hat{g}_k^\top  \hat{s}_k} = \frac{(\hat{g}_k^\top  \hat{s}_k)^2}{\|\hat{g}_k\|^2\|\hat{s}_k\|^2} \frac{\|\hat{s}_k\|^2}{\hat{y}_k^\top  \hat{s}_k} = \frac{\cos^2(\hat{\theta}_k)}{\hat{m}_k}. 
    \end{equation*}
    Furthermore, we have $\|\hat{g}_k\|^2 = \hat{q}_k (f(x_k)-f(x_*))$ from the definition of $\hat{q}_k$ in \eqref{eq:def_alpha_q_m}. Thus, the equality in \eqref{eq:rewrite_alpha} can be rewritten as
    \begin{align*}
        f(x_k) - f(x_{k+1})  = \frac{\hat{\alpha}_k \hat{q}_k}{\hat{m}_k} \cos^2(\hat{\theta}_k)(f(x_k)-f(x_*)).
    \end{align*}
    By rearranging the term in the above equality, we obtain \eqref{eq:one_step_bound}. To prove the inequality in \eqref{eq:product}, note that for any $k \geq 1$, we have 
    \begin{equation*}
        \frac{f(x_{k}) - f(x_*)}{f(x_0) - f(x_*)} = \prod_{i = 0}^{k - 1}\frac{f(x_{i + 1}) - f(x_*)}{f(x_i) - f(x_*)} = \prod_{i = 0}^{k - 1}\left(1 - \frac{\hat{\alpha}_i \hat{q}_i}{\hat{m}_i} \cos^2(\hat{\theta}_i)\right), 
    \end{equation*}
    where the last equality is due to \eqref{eq:one_step_bound}. Notice that the term $1 - \frac{\hat{\alpha}_i \hat{q}_i}{\hat{m}_i} \cos^2(\hat{\theta}_i)$ are non-negative for any $i \geq 0$. Thus, by applying the inequality of arithmetic and geometric means twice, we obtain that  
    \begin{equation*}
    \begin{aligned}
        & \prod_{i = 0}^{k - 1}\left(1 - \frac{\hat{\alpha}_i \hat{q}_i}{\hat{m}_i} \cos^2(\hat{\theta}_i)\right) \leq \left[\frac{1}{k}\sum_{i = 0}^{k - 1}\left(1 - \frac{\hat{\alpha}_i \hat{q}_i}{\hat{m}_i} \cos^2(\hat{\theta}_i)\right)\right]^{k} \\
        & = \left[1 - \frac{1}{k}\sum_{i = 0}^{k - 1}\frac{\hat{\alpha}_i \hat{q}_i}{\hat{m}_i} \cos^2(\hat{\theta}_i)\right]^{k} \leq \left[1 - \left(\prod_{i = 0}^{k - 1} \frac{\hat{\alpha}_i \hat{q}_i}{\hat{m}_i} \cos^2(\hat{\theta}_i)\right)^{\frac{1}{k}}\right]^{k}.
    \end{aligned}
    \end{equation*}
    This completes the proof. 
\end{proof}

\begin{remark}\label{remark_1}
    We note that similar results relating $f(x_k)-f(x_{k+1})$ to $ \cos^2(\hat{\theta}_k)$ have appeared in prior work such as \cite[Lemma 4.2]{byrd1987global} and \cite{QN_tool}, though they are used in the analysis of quasi-Newton methods with inexact line search. Compared with these prior results, Proposition~\ref{lemma_bound} is more general in the sense that we consider the weighted iterates using a general weight matrix $P$. This flexibility enables us to obtain tighter bounds and, more importantly, to obtain a global superlinear convergence rate under the same framework (see Section~\ref{sec:superlinear}). Another subtle yet important difference is that previous works typically upper bound the term $\hat{m}_k$ by $L$ prematurely, leading to the worst dependence on the condition number $\kappa$. Instead, we keep $\hat{m}_k$ in \eqref{eq:one_step_bound} as is  and lower bound the term $\cos^2(\hat{\theta}_k)/\hat{m}_k$ together,  as later shown in Proposition~\ref{lemma_BFGS}. 
\end{remark}

\begin{remark}
    Without loss of generality, we assume that $x_k \neq x_*$ for any $k \geq 0$ throughout the paper, meaning the exact optimal solution $x_*$ is never reached.
    Consequently, this ensures that $\hat{g}_k \neq 0$, $\eta_k >0$, and $\hat{s}_k \neq 0$ for any $k \geq 0$. As a result, we have $\hat{g}_k^\top \hat{s}_k = \eta_k \hat{g}_k^\top \hat{B}_k^{-1}\hat{g}_k \neq 0$, since $\hat{B}_k$ is symmetric positive definite. Moreover, under this assumption, the definitions in equation (15) are valid since all the denominators---$-\hat{g}_k^\top \hat{s}_k$, $f(x_k) - f(x_*)$, and $\|\hat{s}_k\|^2$---are nonzero. Similarly, $\hat{m}_k = \frac{\hat{y}_k^\top \hat{s}_k}{\|\hat{s}_k\|^2}$ is well-defined and nonzero, as $\hat{y}_k^\top \hat{s}_k = - \hat{g}_k^\top \hat{s}_k \neq 0$. 
\end{remark}

Proposition~\ref{lemma_bound} shows that BFGS's convergence rate hinges on four quantities: $\hat{\alpha}_k$, $\hat{q}_k$, $\hat{m}_k$, and $\cos(\hat{\theta}_k)$. Note that $\hat{\alpha}_k$ and $\hat{q}_k$ can be bounded using Assumptions~\ref{ass_str_cvx}, ~\ref{ass_smooth} and \ref{ass_Hess_lip}, independent of the quasi-Newton update, with details deferred to Section~\ref{subsec:intermediate}. The focus here is to establish a lower bound for $\cos^2(\hat{\theta}_k)/\hat{m}_k$. This involves analyzing the dynamics of the Hessian approximation matrices $\{B_k\}_{k\geq 0}$ through their trace and determinant, leveraging the following potential function from \cite{QN_tool} that integrates both:
\begin{equation}\label{potential_function}
    \Psi(A) := \mathbf{Tr}(A) - \log{\mathbf{Det}(A)} - d.
\end{equation}
Given \eqref{eq:Bregman}, $\Psi(A)$ can be regarded as the Bregman divergence generated by  $\Phi(A) = -\log \det (A) $ between the matrix $A$ and the identity matrix $ I$. In particular, $\Psi(A) \geq 0$ and also we have $\Psi(A) = 0$ if and only if $A = I$. Now we are ready to state Proposition~\ref{lemma_BFGS}, which is a classical result in the quasi-Newton literature (e.g., see \cite[Section 6.4]{nocedal2006numerical}). For completeness, we provide its proof in Appendix~\ref{appen:lemma_BFGS}.   

\begin{proposition}\label{lemma_BFGS}
    Given a weight matrix $P\in \mathbb{S}^d_{++}$, recall the weighted vectors and matrices defined in \eqref{weighted_vector} and \eqref{weighted_matrix_1}.  Let $\{\hat{B}_k\}_{k\geq 0}$ be the weighted Hessian approximation matrices generated by the BFGS update in \eqref{BFGS_weighted}. Then we have
    \begin{equation}\label{eq:potential_decrease}
        \Psi(\hat{B}_{k+1}) \leq \Psi(\hat{B}_k) + \frac{\|\hat{y}_k\|^2}{\hat{s}_k^\top \hat{y}_k} -1 + \log \frac{\cos^2\hat{\theta}_k}{\hat{m}_k}, \qquad \forall k \geq 0,
    \end{equation}
    where $\hat{m}_k$ and $\hat{\theta}_k$ are defined in \eqref{eq:def_alpha_q_m}. As a corollary, we have for any $k \geq 1$,
    \begin{equation}\label{eq:sum_of_logs}
        \sum_{i = 0}^{k - 1} \log{\frac{\cos^2(\hat{\theta}_i)}{\hat{m}_i}} \geq  - \Psi(\hat{B}_{0}) + \sum_{i = 0}^{k - 1}\left(1-\frac{\|\hat{y}_i\|^2}{\hat{s}_i^\top \hat{y}_i} \right).
    \end{equation}
\end{proposition}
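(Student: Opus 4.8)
The plan is to establish the one-step inequality \eqref{eq:potential_decrease} by directly computing how the potential $\Psi$ changes under the BFGS update \eqref{BFGS_weighted}, and then to obtain the corollary \eqref{eq:sum_of_logs} by telescoping. Recall that $\Psi(A) = \Tr(A) - \log\Det(A) - d$, so I need to track separately the change in the trace and the change in the log-determinant of $\hat{B}_k$ under the rank-two BFGS update.

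\textbf{Trace computation.} From \eqref{BFGS_weighted}, $\Tr(\hat{B}_{k+1}) = \Tr(\hat{B}_k) - \frac{\|\hat{B}_k \hat{s}_k\|^2}{\hat{s}_k^\top \hat{B}_k \hat{s}_k} + \frac{\|\hat{y}_k\|^2}{\hat{s}_k^\top \hat{y}_k}$, using $\Tr(vv^\top) = \|v\|^2$. The middle term is nonnegative (it is a Rayleigh-quotient-type quantity, strictly positive since $\hat B_k \succ 0$), so I can simply drop it to get $\Tr(\hat{B}_{k+1}) \le \Tr(\hat{B}_k) + \frac{\|\hat{y}_k\|^2}{\hat{s}_k^\top \hat{y}_k}$. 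This already accounts for the $\frac{\|\hat y_k\|^2}{\hat s_k^\top \hat y_k}$ term in the bound.

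\textbf{Determinant computation.} This is the key step. For the BFGS update, there is a classical exact formula:
\begin{equation*}
    \Det(\hat{B}_{k+1}) = \Det(\hat{B}_k)\cdot \frac{\hat{s}_k^\top \hat{y}_k}{\hat{s}_k^\top \hat{B}_k \hat{s}_k}.
\end{equation*}
I would derive this using the matrix determinant lemma applied twice (once for the rank-one subtraction $-\frac{\hat B_k \hat s_k \hat s_k^\top \hat B_k}{\hat s_k^\top \hat B_k \hat s_k}$ and once for the rank-one addition $+\frac{\hat y_k \hat y_k^\top}{\hat s_k^\top \hat y_k}$), being careful to handle the order of the two updates or, alternatively, to factor $\hat B_{k+1} = \hat B_k^{1/2}(I - \hat u \hat u^\top + \hat v \hat v^\top)\hat B_k^{1/2}$ and compute the determinant of the bracketed matrix directly. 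Taking $-\log$, this gives $-\log\Det(\hat{B}_{k+1}) = -\log\Det(\hat{B}_k) - \log\frac{\hat{s}_k^\top \hat{y}_k}{\hat{s}_k^\top \hat{B}_k \hat{s}_k}$. Now I need to rewrite $\frac{\hat{s}_k^\top \hat{y}_k}{\hat{s}_k^\top \hat{B}_k \hat{s}_k}$ in terms of $\cos^2\hat\theta_k$ and $\hat m_k$. Since $-\hat g_k = \hat B_k \hat s_k / \eta_k$ — more precisely $\hat s_k = P^{1/2}s_k = -\eta_k P^{1/2} B_k^{-1} g_k = -\eta_k \hat B_k^{-1}\hat g_k$, so $\hat B_k \hat s_k = -\eta_k \hat g_k$ and hence $\hat s_k^\top \hat B_k \hat s_k = -\eta_k \hat g_k^\top \hat s_k = \eta_k |\hat g_k^\top \hat s_k|$. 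Combined with $\hat s_k^\top \hat y_k = -\hat g_k^\top \hat s_k$ (Lemma~\ref{lemma_line_search}(b)), one can express the ratio so that $\frac{\hat s_k^\top \hat y_k}{\hat s_k^\top \hat B_k \hat s_k} = \frac{(\hat g_k^\top \hat s_k)^2}{\|\hat g_k\|^2 \|\hat s_k\|^2}\cdot\frac{\|\hat g_k\|^2\|\hat s_k\|^2}{(-\hat g_k^\top\hat s_k)(\hat s_k^\top\hat B_k\hat s_k)} = \cos^2(\hat\theta_k)\cdot\frac{\|\hat s_k\|^2}{\hat s_k^\top\hat y_k} = \frac{\cos^2(\hat\theta_k)}{\hat m_k}$, using $\hat s_k^\top \hat B_k \hat s_k = -\hat g_k^\top \hat s_k = \hat s_k^\top \hat y_k$ which follows from $\hat B_k \hat s_k = -\eta_k\hat g_k$ together with part (b) of Lemma~\ref{lemma_line_search}. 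I expect verifying this algebraic identity carefully — tracking the $\eta_k$ factors and the sign of $\hat g_k^\top \hat s_k$ — to be the main technical obstacle, though it is routine once set up. Then $-\log\Det(\hat B_{k+1}) = -\log\Det(\hat B_k) - \log\frac{\cos^2\hat\theta_k}{\hat m_k}$. Wait — I need to double-check the sign so that it matches the $+\log\frac{\cos^2\hat\theta_k}{\hat m_k}$ in \eqref{eq:potential_decrease}; combining the trace and determinant contributions, $\Psi(\hat B_{k+1}) = \Tr(\hat B_{k+1}) - \log\Det(\hat B_{k+1}) - d \le \Tr(\hat B_k) + \frac{\|\hat y_k\|^2}{\hat s_k^\top\hat y_k} - \log\Det(\hat B_k) + \log\frac{\cos^2\hat\theta_k}{\hat m_k} - d$. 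I still need the $-1$: I think one instead writes $\Det(\hat B_{k+1}) = \Det(\hat B_k)\cdot\frac{\hat s_k^\top\hat y_k}{\hat s_k^\top\hat B_k\hat s_k}$ and uses the inequality $-\log t \le -1 + 1/t$ — no, more simply, the exact identity already yields exactly $\Psi(\hat B_{k+1}) \le \Psi(\hat B_k) + \frac{\|\hat y_k\|^2}{\hat s_k^\top\hat y_k} - 1 + \log\frac{\cos^2\hat\theta_k}{\hat m_k}$ only after noting that dropping the strictly positive middle trace term $\frac{\|\hat B_k\hat s_k\|^2}{\hat s_k^\top\hat B_k\hat s_k}$ is where any slack enters; if the $-1$ does not drop out of the exact computation I would obtain it by replacing the dropped term with its lower bound via Cauchy–Schwarz, $\frac{\|\hat B_k\hat s_k\|^2}{\hat s_k^\top\hat B_k\hat s_k}\ge \frac{(\hat s_k^\top\hat B_k\hat s_k)^2}{\|\hat s_k\|^2\cdot\hat s_k^\top\hat B_k\hat s_k}\cdot\frac{\|\hat s_k\|^2}{\hat s_k^\top\hat B_k\hat s_k}$... actually the cleanest route, which I would follow, is to use the known bound $\frac{\|\hat B_k\hat s_k\|^2}{\hat s_k^\top\hat B_k\hat s_k} \ge$ (something that produces the $1$) — this is exactly the content of the classical BFGS potential lemma, and since the statement says the proof is deferred to Appendix~\ref{appen:lemma_BFGS} and is "classical," I would present the trace/determinant split above and invoke the standard estimate $-\log\frac{\hat s_k^\top\hat y_k}{\hat s_k^\top\hat B_k\hat s_k} - \frac{\|\hat B_k\hat s_k\|^2}{\hat s_k^\top\hat B_k\hat s_k} \le -1 - \log\frac{\hat s_k^\top\hat B_k\hat s_k}{\hat s_k^\top\hat y_k}\cdot(\ldots)$, i.e. reduce to the scalar inequality $x - \log x \ge 1$.

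\textbf{The telescoping corollary.} Given \eqref{eq:potential_decrease}, I would rearrange it as $\log\frac{\cos^2\hat\theta_k}{\hat m_k} \ge \Psi(\hat B_{k+1}) - \Psi(\hat B_k) + 1 - \frac{\|\hat y_k\|^2}{\hat s_k^\top\hat y_k}$ and sum over $k = 0,\dots,k-1$. The $\Psi$ terms telescope to $\Psi(\hat B_k) - \Psi(\hat B_0) \ge -\Psi(\hat B_0)$ since $\Psi(\hat B_k)\ge 0$ (as $\Psi(A)\ge 0$ for all $A\succ 0$, noted after \eqref{potential_function}). This yields $\sum_{i=0}^{k-1}\log\frac{\cos^2\hat\theta_i}{\hat m_i} \ge -\Psi(\hat B_0) + \sum_{i=0}^{k-1}\bigl(1 - \frac{\|\hat y_i\|^2}{\hat s_i^\top\hat y_i}\bigr)$, which is exactly \eqref{eq:sum_of_logs}. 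The main obstacle throughout is the determinant identity and the careful bookkeeping of signs and $\eta_k$-factors in identifying $\frac{\hat s_k^\top\hat y_k}{\hat s_k^\top\hat B_k\hat s_k}$ with $\frac{\cos^2\hat\theta_k}{\hat m_k}$; everything else is telescoping and sign-chasing.
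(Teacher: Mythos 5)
Your overall strategy --- splitting $\Psi$ into trace and log-determinant, using the exact rank-two trace formula, the determinant identity $\Det(\hat{B}_{k+1}) = \Det(\hat{B}_k)\,\frac{\hat{s}_k^\top\hat{y}_k}{\hat{s}_k^\top\hat{B}_k\hat{s}_k}$, and telescoping with $\Psi(\hat{B}_k)\ge 0$ --- is exactly the paper's, and your trace computation, determinant formula, and telescoping step are all correct. The gap is in the middle step, where you identify $\frac{\hat{s}_k^\top\hat{y}_k}{\hat{s}_k^\top\hat{B}_k\hat{s}_k}$ with $\frac{\cos^2(\hat\theta_k)}{\hat m_k}$. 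This rests on your claim that $\hat{s}_k^\top\hat{B}_k\hat{s}_k = -\hat{g}_k^\top\hat{s}_k = \hat{s}_k^\top\hat{y}_k$, but since $\hat{B}_k\hat{s}_k = -\eta_k\hat{g}_k$ you actually have $\hat{s}_k^\top\hat{B}_k\hat{s}_k = -\eta_k\,\hat{g}_k^\top\hat{s}_k = \eta_k\,\hat{s}_k^\top\hat{y}_k$, so $\frac{\hat{s}_k^\top\hat{y}_k}{\hat{s}_k^\top\hat{B}_k\hat{s}_k} = 1/\eta_k$, which is not $\frac{\cos^2(\hat\theta_k)}{\hat m_k}$ unless $\eta_k=1$; under exact line search the step size is generally not the unit step, so the $\eta_k$ factor you drop is exactly the error.

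The correct identity is
\begin{equation*}
\frac{\hat{s}_k^\top\hat{y}_k}{\hat{s}_k^\top\hat{B}_k\hat{s}_k} \;=\; \frac{\hat m_k}{\cos^2(\hat\theta_k)}\cdot\frac{\hat{s}_k^\top\hat{B}_k\hat{s}_k}{\|\hat{B}_k\hat{s}_k\|^2},
\end{equation*}
which follows from $\cos(\hat\theta_k) = \hat{s}_k^\top\hat{B}_k\hat{s}_k/(\|\hat{B}_k\hat{s}_k\|\,\|\hat{s}_k\|)$ (valid because $\hat{B}_k\hat{s}_k$ is a positive multiple of $-\hat{g}_k$, so the $\eta_k$'s cancel inside the cosine). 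Taking $-\log$ then leaves, besides $\log\frac{\cos^2(\hat\theta_k)}{\hat m_k}$, the extra term $\log\frac{\|\hat{B}_k\hat{s}_k\|^2}{\hat{s}_k^\top\hat{B}_k\hat{s}_k}$, and this must be paired with the trace term $-\frac{\|\hat{B}_k\hat{s}_k\|^2}{\hat{s}_k^\top\hat{B}_k\hat{s}_k}$ --- which you therefore must \emph{not} drop --- via the scalar inequality $t-\log t\ge 1$ with $t=\frac{\|\hat{B}_k\hat{s}_k\|^2}{\hat{s}_k^\top\hat{B}_k\hat{s}_k}$. That pairing is precisely where the $-1$ in \eqref{eq:potential_decrease} comes from. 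Simply discarding the trace term cannot recover it, since $\frac{\|\hat{B}_k\hat{s}_k\|^2}{\hat{s}_k^\top\hat{B}_k\hat{s}_k}=\frac{\hat{s}_k^\top\hat{B}_k^2\hat{s}_k}{\hat{s}_k^\top\hat{B}_k\hat{s}_k}$ is only bounded below by the smallest eigenvalue of $\hat{B}_k$ and can be far smaller than $1$. You do eventually gesture at reducing to $x-\log x\ge 1$, but the proposal as written never produces the correct pairing, so the one-step bound is not established; the telescoping corollary is fine once it is.
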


Taking exponentiation of both sides in \eqref{eq:sum_of_logs}, Proposition~\ref{lemma_BFGS} provides a lower bound for the product $\prod_{i=0}^{k - 1} \frac{\cos^2(\hat{\theta}_i)}{\hat{m}_i}$ in relation to the sum $\sum_{i=0}^{k - 1} \frac{\|\hat{y}_i\|^2}{\hat{s}_i^\top \hat{y}_i}$ and $\Psi(\hat{B}_0)$. We will use Assumptions~\ref{ass_str_cvx}, ~\ref{ass_smooth} and \ref{ass_Hess_lip} to bound the term $\frac{\|\hat{y}_k\|^2}{\hat{s}_k^\top \hat{y}_k}$ for any $k\geq 0$, as shown in Lemma~\ref{lem:y^2/sy} of Section~\ref{subsec:intermediate}. Moreover, the second term $\Psi(\hat{B}_0)$ depends on our choice of the initial Hessian approximation matrix $B_0$. Specifically, we will consider two different initializations: (i) $B_0 = L I$; (ii) $B_0 = \mu I$. As we shall discuss in the upcoming sections, these two choices result in different bounds and thus lead to a trade-off between the initial linear convergence rate and the final superlinear convergence rate. 

Having outlined our key propositions, Sections~\ref{sec:linear} and~\ref{sec:superlinear} will merge Proposition~\ref{lemma_bound} and Proposition~\ref{lemma_BFGS} to demonstrate that BFGS achieves global non-asymptotic linear and superlinear convergence rates, respectively. Our approach involves selecting an appropriate weight matrix $P$ and bounding the quantities in \eqref{eq:product} to derive the overall convergence rate. Specifically, we set $P = L I$ for global linear convergence and $P = \nabla^2 f(x_*)$ for superlinear convergence. The following section presents intermediate lemmas that will be used to establish these convergence bounds.

\subsection{Intermediate lemmas}\label{subsec:intermediate}

Next, we provide some intermediate results that lower bound the quantities $\hat{\alpha}_k$  and $\hat{q}_k$ defined in \eqref{eq:def_alpha_q_m} and the term $\frac{\|\hat{y}_k\|^2}{\hat{s}_k^\top \hat{y}_k}$ appearing in \eqref{eq:potential_decrease}. To do so, we first define the average Hessian matrices $J_k$ and $G_k$ as 
\begin{align}
    J_k &:= \int_{0}^{1}\nabla^2{f(x_k + \tau (x_{k + 1} - x_k))}d\tau, \label{def_J}\\
     G_k &:= \int_{0}^{1}\nabla^2{f(x_k + \tau (x_{*} - x_k))}d\tau.\label{def_G}
\end{align}
These two matrices play an important role in our analysis, since the fundamental theorem of calculus implies that $y_k = J_k s_k$ and $g_k = G_k (x_k-x^*)$ for any $k \geq 0$. We also define the weighted average Hessian matrix $\hat{J}_k = P^{-\frac{1}{2}}J_k P^{-\frac{1}{2}}$ for the given weight matrix $P \in  \mathbb{S}^d_{++}$. Moreover, we define a quantity $C_k$ that depends on the function value at the iterate $x_k$:
\begin{equation}\label{distance}
    C_k := \frac{2M}{\mu^{\frac{3}{2}}}\sqrt{2(f(x_k) - f(x_*))}, \qquad \forall k \geq 0,
\end{equation}
where $M$ is the Lipschitz constant of the Hessian in Assumption~\ref{ass_Hess_lip} and $\mu$ is the strong convexity parameter in Assumption~\ref{ass_str_cvx}. Given these definitions, in the following lemma, we characterize the relationship between different matrices that appear in our convergence analysis.

\begin{lemma}\label{lemma_Hessian}

Suppose Assumptions~\ref{ass_str_cvx}, \ref{ass_smooth}, and \ref{ass_Hess_lip} hold, and recall the definitions of the matrices $J_k$ in \eqref{def_J}, $G_k$ in \eqref{def_G}, and the quantity $C_k$ in \eqref{distance}. Then, the following statements hold: 
\begin{enumerate}[(a)]
    \item For any $k \geq 0$, we have that
    \begin{equation}\label{eq:J_k_vs_H*}
        \frac{1}{1 + C_k}\nabla^2{f(x_*)} \preceq J_k \preceq (1 + C_k)\nabla^2{f(x_*)}.
    \end{equation}
    \item For any $k\geq 0$, we have that
    \begin{equation}\label{eq:G_k_vs_H*}
        \frac{1}{1 + C_k}\nabla^2{f(x_*)} \preceq G_k \preceq (1 + C_k)\nabla^2{f(x_*)} .
    \end{equation}
    \item For any $k \geq 0$ and any $\hat{\tau} \in [0, 1]$, we have that
    \begin{equation}\label{eq:middle_hessian_vs_J_k}
        \frac{1}{1 + C_k}J_k \preceq \nabla^2{f(x_k + \hat{\tau} (x_{k + 1} - x_{k}))} \preceq (1 + C_k)J_k.
    \end{equation}
    \item For any $k\geq 0$ and $\tilde{\tau} \in [0, 1]$, we have that
    \begin{equation}\label{eq:middle_hessian_vs_G_k}
        \frac{1}{1 + C_k}G_k \preceq \nabla^2{f(x_k + \tilde{\tau} (x_{*} - x_{k}))} \preceq (1 + C_k)G_k.
    \end{equation}
\end{enumerate}
\end{lemma}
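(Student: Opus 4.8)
The key observation is that all four statements follow from a single estimate: if $z_1, z_2 \in \mathbb{R}^d$ are two points and $H_* := \nabla^2 f(x_*)$, then the Hessian Lipschitz continuity (Assumption~\ref{ass_Hess_lip}) gives $\|\nabla^2 f(z) - H_*\| \le M\|z - x_*\|$, and hence, after sandwiching by $H_*^{-1/2}$ and using $H_* \succeq \mu I$,
\begin{equation*}
    \left(1 - \tfrac{M}{\mu}\|z - x_*\|\right) H_* \preceq \nabla^2 f(z) \preceq \left(1 + \tfrac{M}{\mu}\|z - x_*\|\right) H_*.
\end{equation*}
Averaging this over a segment preserves the inequality. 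So the whole proof reduces to controlling $\|z - x_*\|$ for the relevant points $z$, and then replacing the multiplicative factor $1 \pm \frac{M}{\mu}\|z-x_*\|$ by the cleaner bound $1 + C_k$ (and $\frac{1}{1+C_k}$ for the lower side, which is where the one-sided form $\frac{1}{1+C_k} \le 1 - \frac{M}{\mu}\|z-x_*\|$ — valid whenever the latter is nonnegative — must be checked, and when it is negative the lower bound is vacuous since $J_k, G_k \succ 0$). First I would record the elementary fact that strong convexity gives $\frac{\mu}{2}\|x_k - x_*\|^2 \le f(x_k) - f(x_*)$, so that $\frac{M}{\mu}\|x_k - x_*\| \le \frac{M}{\mu}\sqrt{\frac{2(f(x_k)-f(x_*))}{\mu}} = C_k$.

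For part (b): every point on the segment $[x_k, x_*]$ has the form $z = x_k + \tau(x_* - x_k)$ with $\|z - x_*\| = (1-\tau)\|x_k - x_*\| \le \|x_k - x_*\|$, so $\frac{M}{\mu}\|z - x_*\| \le C_k$ for all $\tau \in [0,1]$; integrating the pointwise sandwich over $\tau$ yields \eqref{eq:G_k_vs_H*}. For part (d): the same computation applied to the point $x_k + \tilde\tau(x_* - x_k)$ gives $\frac{1}{1+C_k} H_* \preceq \nabla^2 f(x_k + \tilde\tau(x_*-x_k)) \preceq (1+C_k) H_*$, and combining this with part (b) (which says $\frac{1}{1+C_k} H_* \preceq G_k \preceq (1+C_k) H_*$, equivalently $\frac{1}{1+C_k} G_k \preceq H_* \preceq (1+C_k) G_k$) chains to give \eqref{eq:middle_hessian_vs_G_k} with an extra factor — here I would need to be slightly careful, since naively chaining gives $(1+C_k)^2$, not $(1+C_k)$. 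The fix is to bound $\|z - x_*\|$ for $z$ on the segment $[x_k, x_{k+1}]$ or $[x_k, x_*]$ directly against a point on the \emph{same} segment rather than routing through $H_*$ twice; concretely, for part (d) write $\nabla^2 f(x_k + \tilde\tau(x_*-x_k))$ and compare it to the integrand $\nabla^2 f(x_k + \tau(x_*-x_k))$ defining $G_k$ using $\|(x_k + \tilde\tau(x_*-x_k)) - (x_k+\tau(x_*-x_k))\| = |\tilde\tau - \tau|\|x_k - x_*\| \le \|x_k - x_*\|$, so that $\|\nabla^2 f(x_k+\tilde\tau(x_*-x_k)) - \nabla^2 f(x_k+\tau(x_*-x_k))\| \le M\|x_k - x_*\|$; sandwiching by $\nabla^2 f(x_k+\tau(x_*-x_k))^{-1/2} \preceq \mu^{-1/2}I$ and integrating over $\tau$ gives \eqref{eq:middle_hessian_vs_G_k} directly with the single factor $1+C_k$.

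For parts (a) and (c), I need a bound on $\|z - x_*\|$ for $z$ on the segment $[x_k, x_{k+1}]$. By Lemma~\ref{lemma_line_search}(a), $f(x_{k+1}) \le f(x_k)$, so $\|x_{k+1} - x_*\| \le \sqrt{\frac{2(f(x_{k+1})-f(x_*))}{\mu}} \le \sqrt{\frac{2(f(x_k)-f(x_*))}{\mu}}$, and by convexity of the segment, $\|x_k + \tau(x_{k+1}-x_k) - x_*\| \le \max\{\|x_k-x_*\|, \|x_{k+1}-x_*\|\} \le \sqrt{\frac{2(f(x_k)-f(x_*))}{\mu}}$ for all $\tau\in[0,1]$; hence $\frac{M}{\mu}$ times this is at most $C_k$. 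Then (a) follows exactly as (b) did — integrate the sandwich of $\nabla^2 f(z)$ against $H_*$ over the segment $[x_k, x_{k+1}]$ — and (c) follows exactly as (d) did — compare $\nabla^2 f(x_k + \hat\tau(x_{k+1}-x_k))$ to the integrand $\nabla^2 f(x_k + \tau(x_{k+1}-x_k))$ defining $J_k$, using that both lie on $[x_k,x_{k+1}]$ so their distance is at most $\|x_{k+1}-x_k\| \le 2\sqrt{\frac{2(f(x_k)-f(x_*))}{\mu}}$; this introduces a factor $1 + 2C_k$ rather than $1+C_k$, so I would instead compare to the \emph{endpoint-to-segment} distance $|\hat\tau - \tau|\|x_{k+1}-x_k\|$, which I cannot make smaller than $\|x_{k+1}-x_k\|$ in general — so the honest route is to observe $\|x_k + \hat\tau(x_{k+1}-x_k) - (x_k+\tau(x_{k+1}-x_k))\| = |\hat\tau-\tau|\,\|x_{k+1}-x_k\|$ and bound $\|x_{k+1}-x_k\| \le \|x_{k+1}-x_*\| + \|x_k-x_*\| \le 2\sqrt{2(f(x_k)-f(x_*))/\mu}$, giving factor $1+2C_k$; if the paper genuinely claims $1+C_k$ in (c), then a sharper bound on $\|x_{k+1}-x_k\|$ is needed, e.g. using that exact line search makes $x_{k+1}$ a minimizer along the ray so $\|x_{k+1}-x_*\|$ can be bounded more tightly, or the claim tolerates the looser constant absorbed elsewhere.

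The main obstacle is therefore not any single inequality but \textbf{keeping the multiplicative constants tight}: the temptation to chain two $(1+C_k)$-type bounds produces $(1+C_k)^2$, and one must instead always compare a Hessian at one point on a segment directly to a Hessian at another point on the \emph{same} segment (so the displacement is at most the segment length, controlled by $C_k$ up to a small absolute factor) rather than routing through $H_*$. A secondary point of care is the lower bounds: $\frac{1}{1+C_k} \le 1 - t$ holds only for $t \le \frac{C_k}{1+C_k}$, i.e. one should derive the lower bound as $\nabla^2 f(z) \succeq (1 - \frac{M}{\mu}\|z-x_*\|)H_* \succeq \frac{1}{1+C_k}H_*$ using $1 - \frac{C_k}{1+C_k} = \frac{1}{1+C_k}$ together with $\frac{M}{\mu}\|z-x_*\| \le C_k$, and when $C_k$ is large the factor $1 - \frac{M}{\mu}\|z-x_*\|$ may be negative, in which case the lower bound in the statement still holds trivially because all the matrices involved are positive definite while $\frac{1}{1+C_k}H_* \succ 0$ is a genuine (if weak) lower bound — this needs a one-line remark.
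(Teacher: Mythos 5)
Your overall skeleton---control $\|z-x_*\|$ for points $z$ on the relevant segment via strong convexity and the monotonicity $f(x_{k+1})\le f(x_k)$, invoke the Hessian Lipschitz condition, and convert operator-norm estimates into Loewner-order estimates using the lower bound $\mu I$ on all Hessians---is the same as the paper's, and your displacement estimates for (a), (b), (d) are correct. However, there are two genuine gaps.

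The more serious one affects every lower bound in the lemma. You derive lower bounds in the form $\nabla^2 f(z) \succeq \bigl(1 - \tfrac{M}{\mu}\|z-x_*\|\bigr)H_*$ and then try to pass to $\tfrac{1}{1+C_k}H_*$. Writing $t := \tfrac{M}{\mu}\|z-x_*\|$, the implication $1-t \ge \tfrac{1}{1+C_k}$ requires $t \le \tfrac{C_k}{1+C_k}$, which is strictly stronger than the available bound $t \le C_k$ (e.g.\ $C_k=1$, $t=0.9$ gives $1-t=0.1 < 0.5 = \tfrac{1}{1+C_k}$). You notice this, but your patch---that when $1-t<0$ the claimed bound ``holds trivially because all the matrices involved are positive definite''---is false: two positive definite matrices need not be comparable in the Loewner order, so $\nabla^2 f(z)\succ 0$ and $\tfrac{1}{1+C_k}H_*\succ 0$ do not by themselves give $\nabla^2 f(z)\succeq \tfrac{1}{1+C_k}H_*$. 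The paper's fix is to bound the difference against the matrix that ends up on the \emph{larger} side of the target inequality rather than against $H_*$: for instance, $\nabla^2 f(x_*) - J_k \preceq \|\nabla^2 f(x_*) - J_k\|\, I \preceq \tfrac{1}{\mu}\|\nabla^2 f(x_*) - J_k\|\, J_k \preceq C_k J_k$ (using $J_k \succeq \mu I$ and $\|\nabla^2 f(x_*)-J_k\|\le \mu C_k$), which yields $\nabla^2 f(x_*) \preceq (1+C_k)J_k$, i.e.\ exactly $\tfrac{1}{1+C_k}\nabla^2 f(x_*)\preceq J_k$, with no sign issue and no loss; the symmetric estimate $J_k - \nabla^2 f(x_*) \preceq C_k \nabla^2 f(x_*)$ gives the upper bound. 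The same asymmetric device is what closes the lower bounds in (b), (c), (d).

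The second gap is in part (c), where you only reach the factor $1+2C_k$ and explicitly leave the discrepancy unresolved. The missing observation is that the comparison is against the \emph{integral} $J_k$, so the displacement $|\hat\tau-\tau|\,\|x_{k+1}-x_k\|$ should be integrated over $\tau\in[0,1]$ before being bounded: $\int_0^1 |\hat\tau-\tau|\,d\tau = \tfrac{1}{2}\bigl(\hat\tau^2+(1-\hat\tau)^2\bigr)\le \tfrac{1}{2}$, and this factor $\tfrac{1}{2}$ exactly cancels the factor $2$ coming from $\|x_{k+1}-x_k\|\le \|x_{k+1}-x_*\|+\|x_k-x_*\|\le 2\sqrt{2(f(x_k)-f(x_*))/\mu}$. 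This gives $\|\nabla^2 f(x_k+\hat\tau(x_{k+1}-x_k)) - J_k\|\le \tfrac{M}{\sqrt{\mu}}\sqrt{2(f(x_k)-f(x_*))} = \mu C_k$, and hence the claimed constant $1+C_k$ after applying the device above.
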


\begin{proof}
Please check Appendix~\ref{proof_of_lemma_Hessian}.
\end{proof}

After establishing Lemma~\ref{lemma_Hessian}, in the following three lemmas, we will provide bounds on the quantities $\hat{\alpha}_k$, $\hat{q}_k$ and $\frac{\|\hat{y}_k\|^2}{\hat{s}_k^\top \hat{y}_k}$, respectively. Note that $\hat{\alpha}_k$ is independent of the  choice of the weight matrix $P \in \mathbb{S}^d_{++}$, while $\hat{q}_k$ and $\frac{\|\hat{y}_k\|^2}{\hat{s}_k^\top \hat{y}_k}$ are determined by different options of the weight matrix $P$. Moreover, Lemmas~\ref{lemma_eigenvalue} and~\ref{lem:y^2/sy} are general results that hold for any sequence $\{x_k\}_{k\geq 0}$ and are not specifically tied to our update rule.

\begin{lemma}\label{lemma_kappa}
    Let $\{x_{k}\}_{k \geq 0}$ be the iterates generated by the BFGS algorithm with exact line search, and 
    {recall the definition $\hat{\alpha}_k = \frac{f(x_k) - f(x_{k+1})}{-\hat{g}_k^\top \hat{s}_k}$ in \eqref{eq:def_alpha_q_m}. Suppose Assumptions~\ref{ass_str_cvx}, \ref{ass_smooth}, and \ref{ass_Hess_lip} hold. Then, for any $k\geq 0$, we have}
    \begin{equation}\label{eq:alpha_lower_bound}
        \hat{\alpha}_k \geq \max\left\{\frac{1}{1 + \sqrt{\kappa}}, \frac{1}{2(1 + C_k)} \right\}.
    \end{equation}
\end{lemma}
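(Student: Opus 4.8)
The claim provides two independent lower bounds on $\hat{\alpha}_k = \frac{f(x_k) - f(x_{k+1})}{-\hat{g}_k^\top \hat{s}_k}$, and the plan is to establish each one separately, then take the maximum. Observe first that the numerator $f(x_k) - f(x_{k+1})$ and the denominator $-\hat{g}_k^\top \hat{s}_k = -g_k^\top s_k = y_k^\top s_k$ (using Lemma~\ref{lemma_line_search}(b) and the fact that $\hat{g}_k^\top \hat{s}_k = g_k^\top s_k$ since the weighting is a coordinate change) are both positive by strong convexity, so $\hat{\alpha}_k$ is well-defined and positive. The key will be to relate the function value decrease along the exact-line-search step to the inner product $-g_k^\top s_k$ via the second-order behavior of $f$ along the segment $[x_k, x_{k+1}]$.

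For the bound $\hat{\alpha}_k \geq \frac{1}{1+\sqrt{\kappa}}$, I would use the classical exact-line-search estimate: since $x_{k+1}$ is the exact minimizer of $f$ along the ray $x_k + \eta d_k$, one has $f(x_k) - f(x_{k+1}) \geq \frac{(g_k^\top d_k)^2}{2 \cdot (\text{curvature bound})}$ type inequalities. More precisely, writing $h(\eta) = f(x_k + \eta d_k)$, we have $h'(0) = g_k^\top d_k < 0$ and $h''(\eta) \leq L \|d_k\|^2$; minimizing the quadratic upper model gives $f(x_k) - f(x_{k+1}) \geq \frac{(g_k^\top d_k)^2}{2L\|d_k\|^2}$. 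On the other hand, $-g_k^\top s_k = -\eta_k g_k^\top d_k$, and I need to control $\eta_k$. Since $x_{k+1}$ is a stationary point of $h$, and $h'$ is $L\|d_k\|^2$-Lipschitz-ish, $\eta_k \geq \frac{-g_k^\top d_k}{L\|d_k\|^2}$; combined with $\eta_k \le \frac{-g_k^\top d_k}{\mu\|d_k\|^2}$. Plugging into the ratio should, after using $\mu \preceq \nabla^2 f \preceq LI$ and possibly a direct argument comparing $f(x_k)-f(x_{k+1})$ to $\frac{1}{2}\mu\eta_k^2\|d_k\|^2$ and to $-g_k^\top s_k = -\eta_k g_k^\top d_k \ge \mu \eta_k^2 \|d_k\|^2$ (actually $\geq$ with a factor from the mean value theorem), yield a ratio controlled by $\frac{1}{1+\sqrt{\kappa}}$. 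I would guess the cleanest route is: $-g_k^\top s_k = y_k^\top s_k = s_k^\top J_k s_k \le L\|s_k\|^2$, and $f(x_k) - f(x_{k+1}) = -g_k^\top s_k - \frac12 s_k^\top \bar{J}_k s_k$ for a suitable averaged Hessian along the step, which after bounding gives a lower bound; balancing the two yields the $\sqrt\kappa$ term. I would follow the standard quasi-Newton line-search bookkeeping (cf.\ \cite{byrd1987global}, \cite{QN_tool}).

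For the bound $\hat{\alpha}_k \geq \frac{1}{2(1+C_k)}$, the idea is to replace the crude curvature bound $L$ by the local curvature $J_k$, exploiting Lemma~\ref{lemma_Hessian}. Along the segment $[x_k, x_{k+1}]$, the Hessian is within a factor $(1+C_k)$ of $J_k$ by \eqref{eq:middle_hessian_vs_J_k}. Using Taylor's theorem, $f(x_k) - f(x_{k+1}) = -g_k^\top s_k - \frac{1}{2} s_k^\top \big(\int_0^1\int_0^1 \ldots\big) s_k \geq -g_k^\top s_k - \frac{1+C_k}{2} s_k^\top J_k s_k$, while $-g_k^\top s_k = y_k^\top s_k = s_k^\top J_k s_k$; so $f(x_k)-f(x_{k+1}) \geq -g_k^\top s_k - \frac{1+C_k}{2}(-g_k^\top s_k) $, which is useless when $C_k$ is large. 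The correct approach uses the exact-line-search optimality: $g_{k+1}^\top s_k = 0$, so $-g_k^\top s_k = (g_{k+1} - g_k)^\top s_k = y_k^\top s_k$, and also $f(x_k) - f(x_{k+1}) = \int_0^1 (-g_{k+1+ \ldots})$; more usefully, integrating $g$ along the segment and using $g_{k+1}^\top s_k = 0$, one gets $f(x_k) - f(x_{k+1}) = \int_0^1 \big(g(x_k + \tau s_k) - g_{k+1}\big)^\top(-s_k)\,d\tau$... Actually the cleanest: since the step is an exact minimizer, $f(x_k) - f(x_{k+1}) \geq \frac12 \cdot \frac{(-g_k^\top s_k)^2}{s_k^\top(\sup \text{Hess along segment}) s_k} \geq \frac{1}{2(1+C_k)} \cdot \frac{(-g_k^\top s_k)^2}{s_k^\top J_k s_k} = \frac{1}{2(1+C_k)}(-g_k^\top s_k)$, using $s_k^\top J_k s_k = y_k^\top s_k = -g_k^\top s_k$. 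Dividing by $-\hat g_k^\top \hat s_k = -g_k^\top s_k$ gives exactly $\hat{\alpha}_k \geq \frac{1}{2(1+C_k)}$.

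The main obstacle I anticipate is the first bound: getting the exact dependence $\frac{1}{1+\sqrt{\kappa}}$ rather than a looser $\frac{1}{1+\kappa}$ or $\frac{1}{\kappa}$ requires the right balancing of the two estimates (lower bound on the decrease via the quadratic upper model with constant $L$, and the identity $-g_k^\top s_k = s_k^\top J_k s_k$), and one must be careful that the exact line search — not just a Wolfe-type step — is what licenses using $g_{k+1}^\top s_k = 0$. The second bound is more routine once one phrases the exact-line-search decrease as $\frac12 (-g_k^\top s_k)^2 / (s_k^\top H_{\max} s_k)$ with $H_{\max}$ the largest Hessian along the step and invokes Lemma~\ref{lemma_Hessian}(c). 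I would write out the first bound carefully and treat the second as a short corollary of Taylor expansion plus Lemma~\ref{lemma_Hessian}.
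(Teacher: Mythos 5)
Your treatment of the second bound is essentially sound: the exact-line-search decrease estimate $f(x_k)-f(x_{k+1}) \geq \tfrac{1}{2}\,(-g_k^\top s_k)^2/(s_k^\top H_{\max} s_k)$, with $H_{\max} \preceq (1+C_k)J_k$ supplied by Lemma~\ref{lemma_Hessian}(c) and the identity $s_k^\top J_k s_k = y_k^\top s_k = -g_k^\top s_k$, does yield $\hat{\alpha}_k \geq \frac{1}{2(1+C_k)}$. (The paper reaches the same bound a bit more directly by Taylor-expanding $f(x_k)$ around $x_{k+1}$, where $g_{k+1}^\top s_k=0$ kills the linear term and gives the exact identity $f(x_k)-f(x_{k+1})=\frac{1}{2}s_k^\top\nabla^2 f(\xi)s_k$; either route is fine.)

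The first bound is where you have a genuine gap, and you correctly sense it. Every concrete route you sketch produces something strictly weaker than $\frac{1}{1+\sqrt{\kappa}}$: minimizing the quadratic upper model gives $f(x_k)-f(x_{k+1}) \geq \frac{(g_k^\top d_k)^2}{2L\|d_k\|^2}$, and dividing by $-g_k^\top s_k = -\eta_k g_k^\top d_k$ with $\eta_k \leq \frac{-g_k^\top d_k}{\mu\|d_k\|^2}$ yields only $\hat{\alpha}_k \geq \frac{1}{2\kappa}$; your ``cleanest route'' $f(x_k)-f(x_{k+1}) = -g_k^\top s_k - \frac{1}{2}s_k^\top \bar{J}_k s_k$ with $\bar{J}_k \preceq LI$ and $-g_k^\top s_k = s_k^\top J_k s_k \geq \mu\|s_k\|^2$ leaves a factor $1-\kappa/2$, which is negative for $\kappa>2$. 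The missing ingredient is the interpolation inequality for $\mu$-strongly convex functions with $L$-Lipschitz gradient \cite[Theorem 4]{Taylor_convex}, which, applied at $(x_k,x_{k+1})$ together with $g_{k+1}^\top s_k=0$, gives
\begin{equation*}
f(x_k)-f(x_{k+1}) \;\geq\; \frac{\|y_k\|^2}{2(L-\mu)} + \frac{\mu L\|s_k\|^2}{2(L-\mu)} - \frac{\mu}{L-\mu}\,y_k^\top s_k.
\end{equation*}
Young's inequality on the first two terms produces $\frac{\sqrt{\mu L}}{L-\mu}\|y_k\|\|s_k\| \geq \frac{\sqrt{\mu L}}{L-\mu}\,y_k^\top s_k$, and $\frac{\sqrt{\mu L}-\mu}{L-\mu} = \frac{1}{1+\sqrt{\kappa}}$. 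Without this (or an equivalent) estimate that couples the $\|y_k\|^2$ and $\|s_k\|^2$ terms, the $\sqrt{\kappa}$ dependence does not emerge, so the first half of \eqref{eq:alpha_lower_bound} remains unproved in your proposal.
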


\begin{proof}
    To begin with, note that we have $ -\hat{g}_k^\top \hat{s}_k = \hat{y}_k^\top \hat{s}_k$ due to Lemma~\ref{lemma_line_search}(b) and $\hat{y}_k^\top \hat{s}_k = {y}_k^\top{s}_k $ for any choice of the weight matrix $P$. Thus, $\hat{\alpha}_k$ can be equivalently defined as $\hat{\alpha}_k = \frac{f(x_k) - f(x_{k+1})}{{y}_k^\top {s}_k}$. 
    We first prove the first bound in \eqref{eq:alpha_lower_bound}.  By Assumptions~\ref{ass_str_cvx} and~\ref{ass_smooth}, the function $f$ is $\mu$-strongly convex and its gradient is $L$-Lipschitz. Then for any $x, y \in \mathbb{R}^d$, it holds that
    \begin{equation}\label{eq:interpolation}
    \begin{split}
         f(x) - f(y) - \nabla{f(y)}^\top (x - y) 
         & \geq \frac{\|\nabla{f(x)} - \nabla{f(y)}\|^2}{2(L - \mu)} + \frac{\mu L\|x - y\|^2}{2(L-\mu)} \\
         & \phantom{{}\geq{}} - \frac{\mu}{L - \mu}(\nabla{f(y)} - \nabla{f(x)})^\top (y - x) .
    \end{split}
    \end{equation}
    This is also known as the interpolation inequality; see, e.g., \cite[Theorem 4]{Taylor_convex}. By setting $x = x_k$, $y = x_{k + 1}$ in \eqref{eq:interpolation} and recalling that $s_k = x_{k+1} - x_k$, $y_k = \nabla f(x_{k+1}) - \nabla f(x_k)$ and $g_{k+1} = \nabla f(x_{k+1})$, we obtain that 
    \begin{equation*}
        f(x_{k}) - f(x_{k+1}) + g_{k+1}^\top s_k \geq \frac{1}{2(L-\mu)}\|y_k\|^2 + \frac{\mu L\|s_k\|^2}{2(L-\mu)} - \frac{\mu}{L-\mu} y_k^\top s_k.
    \end{equation*}
    Moreover, Lemma~\ref{lemma_line_search} shows that $g_{k+1}^\top s_k = 0$ due to exact line search. Thus, we can simplify the above inequality as
    \begin{align}\label{proof_lemma_kappa_1}
        f(x_{k}) - f(x_{k+1})  & \geq \frac{1}{2(L-\mu)}\|y_k\|^2 + \frac{\mu L\|s_k\|^2}{2(L-\mu)} - \frac{\mu}{L-\mu} y_k^\top s_k \nonumber\\
        & \geq \frac{\sqrt{\mu L}}{L-\mu}\|y_k\|\|s_k\| - \frac{\mu}{L-\mu} y_k^\top s_k \nonumber\\
    &\geq \left(\frac{\sqrt{\mu L}}{L-\mu} - \frac{\mu}{L-\mu}\right)y_k^\top s_k  = \frac{1}{1 + \sqrt{\kappa}} s_{k}^\top y_k,
    \end{align}
    where we used Young's inequality in the second inequality and the fact that $s_{k}^\top y_k \leq \|s_{k}\| \|y_k\|$ due to Cauchy-Schwartz inequality in the third inequality. Hence, we conclude that $\hat{\alpha}_k = \frac{f(x_k)-f(x_{k+1})}{s_k^\top y_k} \geq \frac{1}{1+\sqrt{\kappa}}$. 

    Now we proceed to establish the second lower bound on $\hat{\alpha}_k$. Given Taylor's theorem, there exists $\tau_k \in [0,1]$ such that 
    \begin{align*}
        f(x_k) & = f(x_{k + 1})  + g_{k + 1}^{\top}(x_k-x_{k+1}) \\
        & \quad + \frac{1}{2}(x_k-x_{k+1})^\top \nabla^2{f(x_k + \tau_k (x_{k + 1} - x_k))}(x_k-x_{k+1}) \\
        & = f(x_{k + 1}) + \frac{1}{2}s_{k}^\top \nabla^2{f(x_k + \tau_k (x_{k + 1} - x_k))} s_k,
    \end{align*}
    where we used $g_{k + 1}^{\top} s_k = 0$. Moreover, based on \eqref{eq:middle_hessian_vs_J_k} in Lemma~\ref{lemma_Hessian}(c), we have
    \begin{align*}
        s_{k}^\top \nabla^2{f(x_k + \tau_k (x_{k + 1} - x_k))} s_k \geq \frac{1}{1 + C_k}s_{k}^\top J_k s_k = \frac{1}{1 + C_k}s_{k}^\top y_k.
    \end{align*}
    Hence, we obtain that
    \begin{equation}\label{proof_lemma_kappa_2}
        f(x_k) - f(x_{k+1}) = \frac{1}{2}s_{k}^\top \nabla^2{f(x_k + \tau_k (x_{k + 1} - x_k))} s_k \geq \frac{1}{2(1 + C_k)} s_{k}^\top y_k.
    \end{equation}
    By combining the inequalities in \eqref{proof_lemma_kappa_1} and \eqref{proof_lemma_kappa_2}, the main claim follows. 
\end{proof}


\begin{lemma}\label{lemma_eigenvalue}
    Recall the definition $\hat{q}_k = \frac{\|\hat{g}_k\|^2}{f(x_k)-f(x_*)}$ in \eqref{eq:def_alpha_q_m}. Suppose Assumptions~\ref{ass_str_cvx},~\ref{ass_smooth}, and \ref{ass_Hess_lip} hold. Then we have the following results: 
    \begin{enumerate}[(a)]
        \item If we choose $P = LI$, then $\hat{q}_k \geq 2/\kappa$. 
        \item If we choose $P = \nabla^2 f(x_*)$, then $\hat{q}_k \geq 2/(1+C_k)^2$. 
    \end{enumerate}
\end{lemma}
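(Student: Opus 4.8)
The plan is to treat the two parts separately. For part (a), with $P = LI$ we have $\|\hat{g}_k\|^2 = g_k^\top P^{-1} g_k = \frac{1}{L}\|g_k\|^2$, so the claim $\hat{q}_k \geq 2/\kappa$ reduces to the gradient-domination inequality $\|g_k\|^2 \geq 2\mu\,(f(x_k) - f(x_*))$. This follows immediately from $\mu$-strong convexity (Assumption~\ref{ass_str_cvx}): from $f(x_*) \geq f(x_k) + g_k^\top(x_* - x_k) + \frac{\mu}{2}\|x_* - x_k\|^2$, minimizing the right-hand side over the displacement $x_* - x_k \in \mathbb{R}^d$ gives $f(x_*) \geq f(x_k) - \frac{1}{2\mu}\|g_k\|^2$. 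Dividing through by $L\,(f(x_k) - f(x_*))$ then yields $\hat{q}_k \geq 2\mu/L = 2/\kappa$.

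For part (b), set $H_* := \nabla^2 f(x_*)$ and $u := x_k - x_*$ (assuming $x_k \neq x_*$, otherwise there is nothing to prove). First I would record the identity $g_k = G_k u$, which is the fundamental theorem of calculus applied to $\nabla f$ along the segment from $x_*$ to $x_k$, combined with the definition \eqref{def_G} of $G_k$ (after the substitution $\tau \mapsto 1 - \tau$). Hence $\|\hat{g}_k\|^2 = g_k^\top H_*^{-1} g_k = u^\top G_k H_*^{-1} G_k\, u$. The key step is to lower bound $G_k H_*^{-1} G_k$ by a multiple of $G_k$ itself rather than of $H_*$: conjugating by $H_*^{-1/2}$ and setting $A := H_*^{-1/2} G_k H_*^{-1/2}$, Lemma~\ref{lemma_Hessian}(b) gives $A \succeq \frac{1}{1+C_k} I$, whence $A^2 \succeq \frac{1}{1+C_k} A$ (since $A - \frac{1}{1+C_k}I \succeq 0$ implies $A^{1/2}(A - \frac{1}{1+C_k}I)A^{1/2} \succeq 0$), and therefore $G_k H_*^{-1} G_k = H_*^{1/2} A^2 H_*^{1/2} \succeq \frac{1}{1+C_k} H_*^{1/2} A H_*^{1/2} = \frac{1}{1+C_k} G_k$. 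This gives $\|\hat{g}_k\|^2 \geq \frac{1}{1+C_k}\, u^\top G_k u$.

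To finish, I would upper bound the optimality gap: by Taylor's theorem there is a point $\xi$ on the segment $[x_*, x_k]$ with $f(x_k) - f(x_*) = \frac{1}{2} u^\top \nabla^2 f(\xi)\, u$ (using $\nabla f(x_*) = 0$), and Lemma~\ref{lemma_Hessian}(d) gives $\nabla^2 f(\xi) \preceq (1+C_k) G_k$, so $f(x_k) - f(x_*) \leq \frac{1+C_k}{2}\, u^\top G_k u$. Dividing the lower bound on $\|\hat{g}_k\|^2$ by this upper bound yields $\hat{q}_k \geq 2/(1+C_k)^2$, as claimed. The only delicate point in the whole argument is the matrix inequality in part (b): bounding $G_k \succeq \frac{1}{1+C_k}H_*$ in the numerator and $\nabla^2 f(\xi) \preceq (1+C_k)^2 H_*$ in the denominator would lose two extra factors of $(1+C_k)$ and deliver only $\hat{q}_k \geq 2/(1+C_k)^4$; the congruence trick that reduces everything to the scalar-type fact ``$A \succeq c I \Rightarrow A^2 \succeq cA$'' is precisely what keeps the bound tight, while the rest is a routine application of Lemma~\ref{lemma_Hessian} and Taylor's theorem.
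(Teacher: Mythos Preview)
Your proof is correct and follows essentially the same approach as the paper: both parts rely on the same ingredients (the gradient-domination inequality from strong convexity for (a), and Taylor's theorem together with parts (b) and (d) of Lemma~\ref{lemma_Hessian} and the identity $g_k = G_k(x_k-x_*)$ for (b)). The only organizational difference in part (b) is that the paper substitutes $x_k - x_* = G_k^{-1}g_k$ into the Taylor expansion and bounds $G_k^{-1}\nabla^2 f(\xi)G_k^{-1} \preceq (1+C_k)G_k^{-1} \preceq (1+C_k)^2 (\nabla^2 f(x_*))^{-1}$ to get $f(x_k)-f(x_*)\leq \tfrac{(1+C_k)^2}{2}\|\hat g_k\|^2$ directly, whereas you keep $u=x_k-x_*$ and compare both $\|\hat g_k\|^2$ and $f(x_k)-f(x_*)$ to the common quantity $u^\top G_k u$; the matrix manipulations are dual to one another and the resulting bound is identical.
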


\begin{proof}
    We first prove (a). When $P = LI$, we have $\hat{q}_k = \frac{\|{g}_k\|^2}{L(f(x_k)-f(x_*))}$. Since $f$ is $\mu$-strongly convex by Assumption~\ref{ass_str_cvx}, it holds that $\|\nabla{f(x_k)}\|^2 \geq 2\mu(f(x_k) - f(x_*)$ (see, e.g, \cite[Section 9.1.2]{boyd04}). Hence, we conclude that $\hat{q}_k \geq 2\mu/L = 2/\kappa$.
    
    Next, we prove (b). When $P = \nabla^2 f(x_*)$, we have $\|\hat{g}_k\|^2 = g_k^\top P^{-1} g_k = g_k^\top (\nabla^2 f(x_*))^{-1} g_k$. By applying Taylor's theorem with Lagrange remainder, there exists $\tilde{\tau}_k \in [0,1]$ such that 
    \begin{equation}\label{lemma_eigenvalue_proof_1}
        \begin{split}
            f(x_k)  
            & = f(x_*) + \nabla{f(x_*)}^\top (x_k - x_*) \\ 
            & \phantom{{}={}} + \frac{1}{2}(x_k - x_*)^\top \nabla^2{f(x_k + \tilde{\tau}_k(x_* - x_k))}(x_k - x_*),\\
            & = f(x_*) + \frac{1}{2}(x_k - x_*)^\top \nabla^2{f(x_k + \tilde{\tau}_k(x_* - x_k))}(x_k - x_*),
    \end{split}
    \end{equation}
    where we used the fact that $\nabla{f(x_*)} = 0$ in the last equality. Moreover, by the fundamental theorem of calculus, we have
    \begin{equation*}
        \nabla{f(x_k)} - \nabla{f(x_*)} = \int_{0}^{1}\nabla^2{f(x_k + \tau (x_{*} - x_k))} (x_k - x_*) \; d\tau = G_k(x_k - x^*),
    \end{equation*}
    where we use the definition of $G_k$ in \eqref{def_G}. Since $\nabla f(x_*) = 0$ and we denote $g_k = \nabla{f(x_k)}$, this further implies that
    \begin{equation}\label{lemma_eigenvalue_proof_2}
        x_k - x_* = G_k^{-1}(\nabla{f(x_k)} - \nabla{f(x_*)}) = G_k^{-1} g_k.
    \end{equation}
    Combining \eqref{lemma_eigenvalue_proof_1} and \eqref{lemma_eigenvalue_proof_2} leads to 
    \begin{equation}\label{eq:f_x_k-f_*}
        f(x_k) - f(x_*) = \frac{1}{2}g_k^\top G_k^{-1}\nabla^2{f(x_k + \tilde{\tau}_k(x_* - x_k))}G_k^{-1}g_k. 
    \end{equation}
    Based on \eqref{eq:middle_hessian_vs_G_k} in Lemma~\ref{lemma_Hessian}(d), we have $\nabla^2{f(x_k + \tilde{\tau}_k (x_{*} - x_{k}))} \preceq (1 + C_k)G_k$, which implies that
    \begin{equation}\label{lemma_eigenvalue_proof_3}
        G_k^{-1}\nabla^2{f(x_k + \tilde{\tau}_k (x_{*} - x_{k}))}G_k^{-1} \preceq (1 + C_k)G_k^{-1}.
    \end{equation}
    Moreover, it follows from \eqref{eq:G_k_vs_H*} in Lemma~\ref{lemma_Hessian}(b) that $\frac{1}{1 +  C_k} \nabla^2{f(x_*)} \preceq G_k$, which implies that
    \begin{equation}\label{lemma_eigenvalue_proof_4}
        G_k^{-1} \preceq (1 + C_k)(\nabla^2{f(x_*)})^{-1}.
    \end{equation}
    Combining \eqref{lemma_eigenvalue_proof_3} and \eqref{lemma_eigenvalue_proof_4}, we obtain that
    \begin{equation*}
        G_k^{-1}\nabla^2{f(x_k + \tilde{\tau}_k (x_{*} - x_{k}))}G_k^{-1} \preceq (1 + C_k)^2(\nabla^2{f(x_*)})^{-1},
    \end{equation*}
    and hence 
    \begin{equation*}
        g_k^\top G_k^{-1}\nabla^2{f(x_k + \tilde{\tau}_k (x_{*} - x_{k}))}G_k^{-1}g_k \leq (1 + C_k)^2 g_k^\top(\nabla^2{f(x_*)})^{-1} g_k.
    \end{equation*}
    By using \eqref{eq:f_x_k-f_*} and the fact that $ \|\hat{g}_k\|^2= g_k^\top (\nabla^2 f(x_*))^{-1} g_k$, we obtain
    \begin{equation*}
        \hat{q}_k = \frac{\|\hat{g}_k\|^2}{f(x_k)-f(x_*)} \geq \frac{2}{(1+C_k)^2},
    \end{equation*}
    and the claim follows.
\end{proof}

\begin{lemma}\label{lem:y^2/sy}
    Suppose Assumptions~\ref{ass_str_cvx},~\ref{ass_smooth}, and \ref{ass_Hess_lip} hold. Then we have 
    \begin{equation*}
        \frac{\|\hat{y}_k\|^2}{\hat{s}_k^\top \hat{y}_k} \leq \|\hat{J}_k\|, \qquad \forall k \geq 0. 
    \end{equation*}
    As a corollary, we have the following results: 
    \begin{enumerate}[(a)]
        \item If we choose $P = LI$, then $ \frac{\|\hat{y}_k\|^2}{\hat{s}_k^\top \hat{y}_k} \leq 1$. 
        \item If we choose $P = \nabla^2 f(x_*)$, then $\frac{\|\hat{y}_k\|^2}{\hat{s}_k^\top \hat{y}_k} \leq 1+C_k$. 
    \end{enumerate}
\end{lemma}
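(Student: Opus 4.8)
The plan is to express everything in terms of the weighted average Hessian $\hat{J}_k$ and then reduce the main inequality to a standard fact about Rayleigh quotients of positive definite matrices. Recall from the fundamental theorem of calculus that $y_k = J_k s_k$; applying the weighting $\hat{y}_k = P^{-1/2}y_k$, $\hat{s}_k = P^{1/2}s_k$ and inserting $P^{-1/2}P^{1/2} = I$ gives the identity $\hat{y}_k = P^{-1/2}J_k s_k = (P^{-1/2}J_k P^{-1/2})(P^{1/2}s_k) = \hat{J}_k \hat{s}_k$. Since each Hessian is symmetric and satisfies $\mu I \preceq \nabla^2 f(\cdot) \preceq LI$ by Assumptions~\ref{ass_str_cvx} and~\ref{ass_smooth}, the average $J_k$ is symmetric positive definite, and hence so is $\hat{J}_k$.

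Using this identity I would rewrite the two quantities of interest as $\hat{s}_k^\top \hat{y}_k = \hat{s}_k^\top \hat{J}_k \hat{s}_k$ and $\|\hat{y}_k\|^2 = \hat{s}_k^\top \hat{J}_k^2 \hat{s}_k$. The claim $\frac{\|\hat{y}_k\|^2}{\hat{s}_k^\top \hat{y}_k} \leq \|\hat{J}_k\|$ then follows from the operator inequality $\hat{J}_k^2 \preceq \|\hat{J}_k\|\,\hat{J}_k$, which holds because $\hat{J}_k \preceq \|\hat{J}_k\| I$ and conjugating both sides by the symmetric matrix $\hat{J}_k^{1/2}$ preserves the Loewner order. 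Sandwiching with $\hat{s}_k$ gives $\hat{s}_k^\top \hat{J}_k^2 \hat{s}_k \leq \|\hat{J}_k\|\,\hat{s}_k^\top \hat{J}_k \hat{s}_k$, and dividing by $\hat{s}_k^\top \hat{J}_k \hat{s}_k > 0$ yields the bound. Equivalently, one can set $u = \hat{J}_k^{1/2}\hat{s}_k$ and recognize the ratio as the Rayleigh quotient $u^\top \hat{J}_k u / \|u\|^2 \leq \lambda_{\max}(\hat{J}_k) = \|\hat{J}_k\|$.

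For the two corollaries I would just bound $\|\hat{J}_k\|$ under the respective choices of $P$. For part (a), with $P = LI$ we have $\hat{J}_k = J_k/L$, and since $J_k \preceq LI$ (it averages Hessians each bounded by $LI$), we get $\|\hat{J}_k\| \leq 1$. For part (b), with $P = \nabla^2 f(x_*)$ we have $\hat{J}_k = \nabla^2 f(x_*)^{-1/2} J_k \nabla^2 f(x_*)^{-1/2}$, and Lemma~\ref{lemma_Hessian}(a) gives $J_k \preceq (1+C_k)\nabla^2 f(x_*)$, so conjugating by $\nabla^2 f(x_*)^{-1/2}$ yields $\hat{J}_k \preceq (1+C_k) I$ and hence $\|\hat{J}_k\| \leq 1+C_k$. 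The proof is essentially routine; the only point needing a little care is the operator inequality $A^2 \preceq \|A\| A$ for symmetric positive semidefinite $A$, which is the crux of the main bound, together with setting up the weighting identity $\hat{y}_k = \hat{J}_k \hat{s}_k$ correctly—neither of which is a genuine obstacle.
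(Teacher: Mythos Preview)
Your proof is correct and essentially identical to the paper's: both establish $\hat{y}_k = \hat{J}_k \hat{s}_k$ from $y_k = J_k s_k$, then bound the ratio via the Rayleigh quotient $\hat{s}_k^\top \hat{J}_k^2 \hat{s}_k / (\hat{s}_k^\top \hat{J}_k \hat{s}_k) \leq \|\hat{J}_k\|$ (the paper writes this with the substitution $u = \hat{J}_k^{1/2}\hat{s}_k$, which you also mention), and handle the two corollaries by the same bounds on $\|\hat{J}_k\|$ using Assumption~\ref{ass_smooth} and Lemma~\ref{lemma_Hessian}(a) respectively.
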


\begin{proof}
    Note that by the fundamental theorem of calculus, we have ${y}_k = {J}_k {s}_k$, which implies that $\hat{y}_k = \hat{J}_k \hat{s}_k$. Hence, we can bound 
    \begin{equation*}
    \frac{\|\hat{y}_k\|^2}{\hat{s}_k^\top \hat{y}_k} = \frac{\hat{s}_k^\top \hat{J}_k \hat{J}_k \hat{s}_k }{\hat{s}_k^\top \hat{J}_k \hat{s}_k} = \frac{\hat{s}_k^\top \hat{J}_k^{\frac{1}{2}} \hat{J}_k \hat{J}_k^{\frac{1}{2}} \hat{s}_k }{\| \hat{J}_k^{\frac{1}{2}} \hat{s}_k\|^2}  \leq \|\hat{J}_k\|.  
    \end{equation*}
    Hence, if $P = LI$, then $\|\hat{J}_k\| = \frac{1}{L}\|J_k\| \leq 1$ by Assumption~\ref{ass_smooth}, which proves the result in (a). Moreover, if $P = \nabla^2 f(x_*)$, then
    \begin{equation*}
    \|\hat{J}_k\| = \|(\nabla^2 f(x_*))^{-\frac{1}{2}}J_k(\nabla^2 f(x_*))^{-\frac{1}{2}}\| \leq 1 + C_k,
    \end{equation*}
    by \eqref{eq:J_k_vs_H*} in Lemma~\ref{lemma_Hessian}(a), which proves the result in (b).  
\end{proof}

\section{Global linear convergence rates}\label{sec:linear}

In this section, we establish the explicit global linear convergence rates for the BFGS method using an exact line search step size, marking one of the first non-asymptotic global linear convergence analyses of BFGS with a line search scheme. The subsequent global superlinear convergence analyses are established based on these linear rates. 

Specifically, we combine the fundamental inequality \eqref{eq:product} from Proposition~\ref{lemma_bound} with lower bounds of the terms $\hat{\alpha}_k$, $\hat{q}_k$, and $\cos^2(\hat{\theta}_k)/\hat{m}_k$ from Lemma~\ref{lemma_kappa}, \ref{lemma_eigenvalue}, \ref{lem:y^2/sy} and Proposition~\ref{lemma_BFGS} to prove all the global linear convergence rates. In this section, we set the weight matrix $P$ as $P = LI$ and we define the weighted matrix $\bar{B}_k$ as:
\begin{equation}\label{weighted_matrix_2}
    \bar{B}_k = \frac{1}{L}B_k, \qquad \text{ for}\  \ k\geq 0.
\end{equation}

In the following lemma, we prove the global linear convergence rate of the BFGS method for any choice of $B_0 \in \mathbb{S}^d_{++}$.

\begin{lemma}\label{lemma_6}
    Let $\{x_k\}_{k\geq 0}$ be the iterates generated by the BFGS method with exact line search and suppose that Assumptions~\ref{ass_str_cvx} and \ref{ass_smooth} hold. For any initial point $x_0 \in \mathbb{R}^{d}$ and any initial Hessian approximation matrix $B_0 \in \mathbb{S}^d_{++}$, we have the following global linear convergence rate for any $k \geq 1$,
    \begin{equation}\label{lemma_6_1}
        \frac{f(x_k) - f(x_*)}{f(x_0) - f(x_*)} \leq \left(1 - e^{-\frac{\Psi(\bar{B}_0)}{k}}\frac{2}{\kappa(1 + \sqrt{\kappa})}\right)^{k}.
    \end{equation}
\end{lemma}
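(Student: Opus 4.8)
The plan is to assemble the lemma directly from the machinery built up in Sections~\ref{sec:basic} and~\ref{subsec:intermediate}, specialized to the weight matrix $P = LI$, for which the weighted Hessian approximation $\hat{B}_k$ in \eqref{weighted_matrix_1} coincides with $\bar{B}_k = \frac1L B_k$ from \eqref{weighted_matrix_2}. I would start from the corollary \eqref{eq:product} of Proposition~\ref{lemma_bound}, which already absorbs the two applications of the AM--GM inequality and reduces the task to producing a lower bound on the product $\prod_{i=0}^{k-1}\frac{\hat{\alpha}_i \hat{q}_i}{\hat{m}_i}\cos^2(\hat{\theta}_i)$ that is uniform in the iterates. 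So the whole proof boils down to lower bounding this single product and substituting back.

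Next I would factor the product as $\left(\prod_{i=0}^{k-1}\hat{\alpha}_i\right)\left(\prod_{i=0}^{k-1}\hat{q}_i\right)\left(\prod_{i=0}^{k-1}\frac{\cos^2(\hat{\theta}_i)}{\hat{m}_i}\right)$ and bound each factor separately. For the first, Lemma~\ref{lemma_kappa} gives $\hat{\alpha}_i \ge \frac{1}{1+\sqrt{\kappa}}$ (dropping the $C_i$-dependent term in \eqref{eq:alpha_lower_bound}), hence $\prod_{i=0}^{k-1}\hat{\alpha}_i \ge (1+\sqrt{\kappa})^{-k}$. For the second, Lemma~\ref{lemma_eigenvalue}(a) with $P=LI$ gives $\hat{q}_i \ge 2/\kappa$, hence $\prod_{i=0}^{k-1}\hat{q}_i \ge (2/\kappa)^{k}$. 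For the third factor, I would exponentiate \eqref{eq:sum_of_logs} of Proposition~\ref{lemma_BFGS} to get $\prod_{i=0}^{k-1}\frac{\cos^2(\hat{\theta}_i)}{\hat{m}_i} \ge e^{-\Psi(\bar{B}_0)}\prod_{i=0}^{k-1}\exp\!\left(1-\frac{\|\hat{y}_i\|^2}{\hat{s}_i^\top\hat{y}_i}\right)$, and then invoke Lemma~\ref{lem:y^2/sy}(a), which says $\frac{\|\hat{y}_i\|^2}{\hat{s}_i^\top\hat{y}_i}\le 1$ when $P=LI$, so every exponential factor is $\ge 1$ and the whole product is $\ge e^{-\Psi(\bar{B}_0)}$.

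Combining the three bounds yields $\prod_{i=0}^{k-1}\frac{\hat{\alpha}_i \hat{q}_i}{\hat{m}_i}\cos^2(\hat{\theta}_i) \ge \left(\frac{2}{\kappa(1+\sqrt{\kappa})}\right)^{k} e^{-\Psi(\bar{B}_0)}$, and taking the $k$-th root gives $\left(\prod_{i=0}^{k-1}\frac{\hat{\alpha}_i \hat{q}_i}{\hat{m}_i}\cos^2(\hat{\theta}_i)\right)^{1/k} \ge \frac{2}{\kappa(1+\sqrt{\kappa})}\,e^{-\Psi(\bar{B}_0)/k}$. Plugging this into \eqref{eq:product} and noting that the map $t\mapsto(1-t)^k$ is decreasing on $[0,1]$ yields exactly \eqref{lemma_6_1}.

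I do not expect a genuine obstacle here; the lemma is essentially a bookkeeping step that threads together Propositions~\ref{lemma_bound} and~\ref{lemma_BFGS} with Lemmas~\ref{lemma_kappa}, \ref{lemma_eigenvalue}, and~\ref{lem:y^2/sy} under the choice $P = LI$. The only points requiring a moment of care are the sign conditions that license the manipulations: each factor $1-\frac{\hat{\alpha}_i\hat{q}_i}{\hat{m}_i}\cos^2(\hat{\theta}_i)$ is nonnegative (this is already used inside Proposition~\ref{lemma_bound} and follows from $f(x_{i+1})\le f(x_i)$ in Lemma~\ref{lemma_line_search}(a)), and each summand $1-\frac{\|\hat{y}_i\|^2}{\hat{s}_i^\top\hat{y}_i}$ is nonnegative so that it may be dropped after exponentiation — this is precisely Lemma~\ref{lem:y^2/sy}(a). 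The substantive work lies upstream in those propositions and lemmas, which I am free to invoke.
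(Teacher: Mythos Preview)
Your proposal is correct and follows essentially the same route as the paper's own proof: specialize to $P=LI$, invoke Proposition~\ref{lemma_bound}'s corollary~\eqref{eq:product}, lower bound $\hat{\alpha}_i$, $\hat{q}_i$, and $\prod_i \cos^2(\hat{\theta}_i)/\hat{m}_i$ via Lemmas~\ref{lemma_kappa}, \ref{lemma_eigenvalue}(a), and Proposition~\ref{lemma_BFGS} together with Lemma~\ref{lem:y^2/sy}(a), then combine and take the $k$-th root. The care points you flag (nonnegativity of $1-\frac{\hat{\alpha}_i\hat{q}_i}{\hat{m}_i}\cos^2(\hat{\theta}_i)$ and of $1-\|\hat{y}_i\|^2/\hat{s}_i^\top\hat{y}_i$) are exactly those the paper relies on implicitly.
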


\begin{proof}
    Our starting point is applying Proposition~\ref{lemma_bound} with the weight matrix $P$ chosen as $P = LI$. Specifically, \eqref{eq:product} shows that to obtain a convergence rate, it suffices to prove a lower bound on $\prod_{i=0}^{k-1} \frac{\hat{\alpha}_i \hat{q}_i}{\hat{m}_i} \cos^2(\hat{\theta}_i)$. It follows from Lemma~\ref{lemma_kappa} that $\hat{\alpha}_k = \frac{f(x_k) - f(x_{k + 1})}{s_k^\top y_k} \geq \frac{1}{\sqrt{\kappa} + 1}$ for any $k\geq 0$. Moreover, by applying Lemma~\ref{lemma_eigenvalue} with $P = L I$, we obtain that $\hat{q}_k = \frac{\|\hat{g}_k\|^2}{f(x_k)-f(x_*)} \geq \frac{2}{\kappa}$ for any $k \geq 0$. Furthermore, applying Proposition~\ref{lemma_BFGS} with $P = L I$, it follows from \eqref{eq:sum_of_logs} that 
    \begin{equation*}
        \sum_{i = 0}^{k - 1} \log{\frac{\cos^2(\hat{\theta}_i)}{\hat{m}_i}} \geq  - \Psi(\bar{B}_{0}) + \sum_{i = 0}^{k - 1}\left(1-\frac{\|\hat{y}_i\|^2}{\hat{s}_i^\top \hat{y}_i} \right) \geq - \Psi(\bar{B}_{0}),
    \end{equation*}
    where in the last inequality we used $\frac{\|\hat{y}_i\|^2}{\hat{s}_i^\top \hat{y}_i} \leq 1$ by Lemma~\ref{lem:y^2/sy} with $P = LI$. Taking exponentiation of both sides, this further implies that
    \begin{equation}\label{lemma_6_proof_3}
    \qquad \prod_{i = 0}^{k-1} \frac{\cos^2(\hat{\theta}_i)}{\hat{m}_i} \geq e^{-\Psi(\bar{B}_{0})}.
    \end{equation}
    Combining all the pieces above, we get 
    \begin{equation*}
        \prod_{i=0}^{k-1} \frac{\hat{\alpha}_i \hat{q}_i}{\hat{m}_i} \cos^2(\hat{\theta}_i) \geq \prod_{i=0}^{k-1} (\hat{\alpha}_i \hat{q}_i) \prod_{i=0}^{k-1} \frac{\cos^2(\hat{\theta}_i)}{\hat{m}_i} \geq \left(\frac{2}{\kappa (\sqrt{\kappa}+1)}\right)^k  e^{-\Psi(\bar{B}_{0})}.
    \end{equation*}
    Thus, it follows from Proposition~\ref{lemma_bound} that 
    \begin{equation*}
        \frac{f(x_{k}) - f(x_*)}{f(x_0) - f(x_*)} \leq \left[1 - \left(\prod_{i = 0}^{k-1} \frac{\hat{\alpha}_i \hat{q}_i}{\hat{m}_i} \cos^2(\hat{\theta}_i)\right)^{\frac{1}{k}}\right]^{k} \!\!\leq \left(1 - e^{-\frac{\Psi(\bar{B}_0)}{k}}\frac{2}{\kappa(1 + \sqrt{\kappa})}\right)^{k}. 
    \end{equation*}
    This completes the proof. 
\end{proof}

Notice that this result holds without the Hessian Lipschitz continuity assumption. In the next lemma, we present another version of the global linear convergence analysis with the additional assumption the Hessian of $f$ is $M$-Lipschitz. We show that the BFGS method with exact line search will eventually reach a global linear convergence rate of $(1 - {1}/\mathcal{O}{(\kappa)})^{k}$, which is the same as the gradient descent method.

\begin{lemma}\label{lemma_7}
    Let $\{x_k\}_{k\geq 0}$ be the iterates generated by the BFGS method with exact line search and suppose that Assumptions~\ref{ass_str_cvx}, \ref{ass_smooth} and \ref{ass_Hess_lip} hold. For any initial point $x_0 \in \mathbb{R}^{d}$ and any initial Hessian approximation matrix $B_0 \in \mathbb{S}^d_{++}$, we have the following global linear convergence rate for any $k \geq 1$,
    \begin{equation}\label{lemma_7_1}
        \frac{f(x_k) - f(x_*)}{f(x_0) - f(x_*)} \leq \left(1 - e^{-\frac{\Psi(\bar{B}_0)}{k}}\frac{1}{\kappa}\frac{1}{1 + C_0}\right)^{k}.
    \end{equation}
    Moreover, when $k \geq (1 + C_0)\Psi(\bar{B}_0) + 3C_0\kappa\min\{2(1 + C_0), (1 + \sqrt{\kappa})\}$, we have
    \begin{equation}\label{lemma_7_2}
        \frac{f(x_k) - f(x_*)}{f(x_0) - f(x_*)} \leq \left(1 - \frac{1}{3\kappa}\right)^{k}.
    \end{equation}
\end{lemma}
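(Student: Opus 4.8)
The plan is to run the same machinery as in Lemma~\ref{lemma_6} --- apply Proposition~\ref{lemma_bound} and Proposition~\ref{lemma_BFGS} with the weight matrix $P = LI$ --- but to extract the \emph{sharper} lower bound $\hat{\alpha}_k \geq \frac{1}{2(1+C_k)}$ from Lemma~\ref{lemma_kappa} and to exploit that $C_k$ decays along the trajectory. The key preliminary observation is that, by Lemma~\ref{lemma_line_search}(a), the sequence $f(x_k)-f(x_*)$ is non-increasing, so $C_k = \frac{M}{\mu^{3/2}}\sqrt{2(f(x_k)-f(x_*))}$ is non-increasing and in particular $C_k \leq C_0$ for all $k$. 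Combining $\hat{\alpha}_i \geq \frac{1}{2(1+C_i)}$, $\hat{q}_i \geq 2/\kappa$ (Lemma~\ref{lemma_eigenvalue}(a)), and $\prod_{i=0}^{k-1}\frac{\cos^2(\hat{\theta}_i)}{\hat{m}_i} \geq e^{-\Psi(\bar{B}_0)}$ (which follows from \eqref{eq:sum_of_logs} together with $\frac{\|\hat{y}_i\|^2}{\hat{s}_i^\top\hat{y}_i}\leq 1$ from Lemma~\ref{lem:y^2/sy}(a)), we get $\prod_{i=0}^{k-1}\frac{\hat{\alpha}_i\hat{q}_i}{\hat{m}_i}\cos^2(\hat{\theta}_i) \geq \kappa^{-k}\big(\prod_{i=0}^{k-1}\frac{1}{1+C_i}\big)e^{-\Psi(\bar{B}_0)} \geq \kappa^{-k}(1+C_0)^{-k}e^{-\Psi(\bar{B}_0)}$, and plugging this into \eqref{eq:product} yields \eqref{lemma_7_1} directly.

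For the improved rate \eqref{lemma_7_2}, I would keep the factor $\prod_{i=0}^{k-1}\frac{1}{1+C_i} = \exp\big(-\sum_{i=0}^{k-1}\log(1+C_i)\big)$ \emph{intact} instead of crudely bounding it by $(1+C_0)^{-k}$, and argue it is only a constant (not an exponential) factor once $k$ is large. Via \eqref{eq:product} it suffices to show $\prod_{i=0}^{k-1}\frac{\hat{\alpha}_i\hat{q}_i}{\hat{m}_i}\cos^2(\hat{\theta}_i) \geq (3\kappa)^{-k}$; by the bound above this reduces, using $\log(1+C_i)\leq C_i$ and $\log 3 > 1$, to establishing $\sum_{i=0}^{k-1}C_i + \Psi(\bar{B}_0) \leq k$. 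I would bound $\sum_{i=0}^{k-1}C_i$ by splitting at $i = \Psi(\bar{B}_0)$: for $i < \Psi(\bar{B}_0)$, use $C_i \leq C_0$, contributing at most about $C_0\Psi(\bar{B}_0)$; for $i \geq \Psi(\bar{B}_0)$, bootstrap from \eqref{lemma_7_1}, noting $e^{-\Psi(\bar{B}_0)/i}\geq e^{-1}$, so $f(x_i)-f(x_*) \leq \big(1-\tfrac{c}{\kappa(1+C_0)}\big)^i(f(x_0)-f(x_*))$ for an absolute constant $c$, hence $C_i \leq C_0\big(1-\tfrac{c}{\kappa(1+C_0)}\big)^{i/2}\leq C_0 e^{-c i/(2\kappa(1+C_0))}$, and summing the geometric series gives $\sum_{i\geq \Psi(\bar{B}_0)}C_i = \mathcal{O}\big(C_0\kappa(1+C_0)\big)$; repeating this with the $\kappa$-free per-step rate $\frac{2}{\kappa(1+\sqrt{\kappa})}$ of \eqref{lemma_6_1} gives the alternative bound $\mathcal{O}\big(C_0\kappa(1+\sqrt{\kappa})\big)$, and we keep the smaller. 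Together these give $\sum_{i=0}^{k-1}C_i + \Psi(\bar{B}_0) \leq (1+C_0)\Psi(\bar{B}_0) + 3C_0\kappa\min\{2(1+C_0), 1+\sqrt{\kappa}\}$ up to absolute constants, so the hypothesis on $k$ forces $\sum_{i=0}^{k-1}C_i + \Psi(\bar{B}_0)\leq k$ and hence \eqref{lemma_7_2}.

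The hard part is the bookkeeping in this last step rather than any conceptual difficulty: the factor $e^{-\Psi(\bar{B}_0)/i}$ in \eqref{lemma_7_1} is essentially useless for $i$ below $\Psi(\bar{B}_0)$, so on that initial stretch the iterates are only controlled by the crude $C_i \leq C_0$, and one must verify that this transient cost, plus the geometric tail $\sum_{i\geq \Psi(\bar{B}_0)}C_i$, plus the constant $\Psi(\bar{B}_0)$ coming from Proposition~\ref{lemma_BFGS}, all still fit under $k\log 3$ --- it is the slack $\log 3 > 1$ that absorbs the lower-order terms, and taking the minimum over the two available per-step linear rates is what produces the $\min\{2(1+C_0), 1+\sqrt{\kappa}\}$ factor in the threshold. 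Everything else is a direct substitution into the framework of Proposition~\ref{lemma_bound} and Proposition~\ref{lemma_BFGS} already assembled in Section~\ref{sec:basic}.
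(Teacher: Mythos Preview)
Your proposal is correct and follows essentially the same route as the paper: apply the framework of Propositions~\ref{lemma_bound} and~\ref{lemma_BFGS} with $P=LI$, use the sharper bound $\hat{\alpha}_k \geq \frac{1}{2(1+C_k)}$ and $C_k \leq C_0$ to get \eqref{lemma_7_1}, then for \eqref{lemma_7_2} bound $\prod_{i=0}^{k-1}\frac{1}{1+C_i}\geq e^{-\sum_i C_i}$, split $\sum_{i=0}^{k-1}C_i$ at $i=\Psi(\bar{B}_0)$, control the initial stretch by $C_0\Psi(\bar{B}_0)$, and sum the geometric tail using the linear rates of \eqref{lemma_6_1} and \eqref{lemma_7_1} (taking the better of the two) to obtain $\sum_i C_i + \Psi(\bar{B}_0)\leq k$ under the stated threshold. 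The paper carries out exactly this argument, with the only cosmetic difference being that it combines the two linear rates into a single $\max$ before summing rather than summing twice and taking the $\min$.
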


\begin{proof}
    We follow a similar argument as in the proof of Lemma~\ref{lemma_6} but with a different lower bound for $\hat{\alpha}_k$. Specifically, by Lemma~\ref{lemma_kappa}, we also have $\hat{\alpha}_k = \frac{f(x_k) - f(x_{k + 1})}{s_k^\top y_k} \geq \frac{1}{2(1 + C_k)}$. Combining this with $\hat{q}_k \geq 2/\kappa$ and \eqref{lemma_6_proof_3} leads to  
    \begin{equation}\label{eq:lower_product_C_i}
        \prod_{i=0}^{k-1} \frac{\hat{\alpha}_i \hat{q}_i}{\hat{m}_i} \cos^2(\hat{\theta}_i) \geq \prod_{i=0}^{k-1} (\hat{\alpha}_i \hat{q}_i) \prod_{i=0}^{k-1} \frac{\cos^2(\hat{\theta}_i)}{\hat{m}_i} \geq \left(\frac{1}{\kappa}\right)^k  e^{-\Psi(\bar{B}_{0})}\prod_{i = 0}^{k - 1}\frac{1}{1 + C_i}.
    \end{equation}
    To begin with, recall the definition that $C_i = \frac{M}{\mu^{\frac{3}{2}}}\sqrt{2(f(x_i) - f(x_*))}$. Since the objective function is non-increasing by Lemma~\ref{lemma_line_search}, it holds that $C_i \leq C_0$ for any $i \geq 0$. Thus, from \eqref{eq:lower_product_C_i} we have 
    \begin{equation*}
        \left(\prod_{i=0}^{k-1} \frac{\hat{\alpha}_i \hat{q}_i}{\hat{m}_i} \cos^2(\hat{\theta}_i) \right)^{\frac{1}{k}} \geq \frac{1}{\kappa}  e^{-\frac{\Psi(\bar{B}_{0})}{k}}\frac{1}{1 + C_0} .
    \end{equation*}
    Thus, by using Proposition~\ref{lemma_bound} we obtain \eqref{lemma_7_1}. 
    
    To prove the second claim in \eqref{lemma_7_2}, we use the fact that $1+x \leq e^x$ for any $x \in \reals$ to get
    \begin{equation}\label{eq:exp_lower_bound}
        \prod_{i = 0}^{k - 1}\frac{1}{1 + C_i} \geq \prod_{i = 0}^{k - 1} e^{-C_i} = e^{-\sum_{i=0}^{k-1} C_i}. 
    \end{equation}
    Combining \eqref{eq:lower_product_C_i} and \eqref{eq:exp_lower_bound} leads to 
    \begin{equation}\label{eq:bound_with_sum_C}
        \prod_{i=0}^{k-1} \frac{\hat{\alpha}_i \hat{q}_i}{\hat{m}_i} \cos^2(\hat{\theta}_i) \geq \left(\frac{1}{\kappa}\right)^k  e^{-\Psi(\bar{B}_{0})-\sum_{i=0}^{k-1} C_i}.
    \end{equation} 
    {Next, we prove an upper bound on $\sum_{i=0}^{k-1} C_i$. When $k \geq (1 + C_0)\Psi(\bar{B}_0) + 3C_0\kappa\min\{2(1 + C_0), (1 + \sqrt{\kappa})\}$, we have that $k \geq (1 + C_0)\Psi(\bar{B}_0)$, which implies that $k \geq \Psi(\bar{B}_0)$. Then \eqref{lemma_6_1} in Lemma~\ref{lemma_6} and \eqref{lemma_7_1} together imply that }
    \begin{equation*}
        \frac{f(x_k) - f(x_*)}{f(x_0) - f(x_*)} \leq \left(1 -\frac{1}{3\kappa}\max\left\{\frac{2}{1 + \sqrt{\kappa}}, \frac{1}{1 + C_0}\right\}\right)^{k},
    \end{equation*}
    where we used the fact that $e^{-\frac{\Psi(\bar{B}_0)}{k}} \geq e^{-1} \geq \frac{1}{3}$. Moreover, we decompose the sum $\sum_{i=0}^{k-1} C_i$ into two parts by $\sum_{i=0}^{k-1} C_i  = \sum_{i=0}^{\Psi(\bar{B}_0) - 1} C_i  + \sum_{i=\Psi(\bar{B}_0)}^{k-1} C_i$. For the first part, we have $\sum_{i=0}^{\Psi(\bar{B}_0) - 1} C_i \leq C_0 \Psi(\bar{B}_0)$. For the second part, by the definition of $C_i$, we have 
    \begin{align*}
        \sum_{i=\Psi(\bar{B}_0)}^{k-1} C_i & = C_0 \sum_{i=\Psi(\bar{B}_0)}^{k-1} \sqrt{\frac{f(x_i) - f(x_*)}{f(x_0) - f(x_*)}} \\
        &\leq C_0 \sum_{i = \Psi(\bar{B}_0)}^{k - 1}\left(1 - \frac{1}{3\kappa}\max\left\{\frac{2}{1 + \sqrt{\kappa}},\frac{1}{1 + C_0}\right\}\right)^{\frac{i}{2}} \\
        & \leq \frac{C_0}{1 - \sqrt{1 - \frac{1}{3\kappa}\max\{\frac{1}{1 + C_0}, \frac{2}{1 + \sqrt{\kappa}}\}}} \\
        & \leq 3C_0\kappa\min\{2(1 + C_0), {1 +\sqrt{\kappa}}\},
    \end{align*}
    where we used $\sqrt{1-x} \leq 1-\frac{1}{2}x$ for all $0\leq x \leq 1$ in the last inequality. Combining both inequalities, we arrive at 
    \begin{equation}\label{eq:bound_C}
        \sum_{i=0}^{k-1} C_i \leq C_0\Psi(\bar{B}_0) + 3C_0\kappa\min\{2(1 + C_0), 1 + \sqrt{\kappa}\}.
    \end{equation}
    Thus, when the number of iterations $k$ exceeds $(1 + C_0)\Psi(\bar{B}_0) + 3C_0\kappa\min\{2(1 + C_0), (1 + \sqrt{\kappa})\}$, by \eqref{eq:bound_with_sum_C} we have 
    \begin{equation*}
       \left( \prod_{i=0}^{k-1} \frac{\hat{\alpha}_i \hat{q}_i}{\hat{m}_i} \cos^2(\hat{\theta}_i) \right)^{\frac{1}{k}} \geq \frac{1}{\kappa}  e^{-\frac{1}{k}(\Psi(\bar{B}_{0})+\sum_{i=0}^{k-1} C_i)} \geq \frac{1}{e \kappa} \geq \frac{1}{3\kappa}. 
    \end{equation*} 
    Together with Proposition~\ref{lemma_bound}, this proves the second claim in \eqref{lemma_7_2}. 
\end{proof}

We summarize all the global linear convergence results from the above two lemmas in the following theorem.

\begin{theorem}\label{theorem_1}
    Let $\{x_k\}_{k\geq 0}$ be the iterates generated by the BFGS method with exact line search and suppose that Assumptions~\ref{ass_str_cvx}, \ref{ass_smooth} and \ref{ass_Hess_lip} hold. For any initial point $x_0 \in \mathbb{R}^{d}$ and any initial matrix $B_0 \in \mathbb{S}_{++}^{d}$, we have the following global linear convergence rate for any $k \geq 1$,
    \begin{equation}\label{theorem_1_1}
        \frac{f(x_k) - f(x_*)}{f(x_0) - f(x_*)} \leq \left(1 - e^{-\frac{\Psi(\bar{B}_0)}{k}}\frac{1}{\kappa}\max\left\{\frac{2}{1 + \sqrt{\kappa}}, \frac{1}{1 + C_0}\right\}\right)^{k},
    \end{equation}
    where $\bar{B}_0$ is defined in \eqref{weighted_matrix_2}. When $k \geq \Psi(\bar{B}_0)$, we have that
    \begin{equation}\label{theorem_1_2}
        \frac{f(x_k) - f(x_*)}{f(x_0) - f(x_*)} \leq \left(1 - \frac{1}{3\kappa}\max\left\{\frac{2}{1 + \sqrt{\kappa}}, \frac{1}{1 + C_0}\right\}\right)^{k}.
    \end{equation}
    Moreover, when $k \geq (1 + C_0)\Psi(\bar{B}_0) + 3C_0\kappa\min\{2(1 + C_0), 1 +\sqrt{\kappa}\}$, we have
    \begin{equation}\label{theorem_1_3}
        \frac{f(x_k) - f(x_*)}{f(x_0) - f(x_*)} \leq \left(1 - \frac{1}{3\kappa}\right)^{k}.
    \end{equation}
\end{theorem}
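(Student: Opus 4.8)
The plan is to assemble Theorem~\ref{theorem_1} directly from the two lemmas just proved, since it is essentially a restatement that merges their conclusions and extracts the ``$k$ large'' regime where the factor $e^{-\Psi(\bar{B}_0)/k}$ becomes a harmless constant. First I would note that combining the rate in \eqref{lemma_6_1} of Lemma~\ref{lemma_6} with the rate in \eqref{lemma_7_1} of Lemma~\ref{lemma_7} immediately yields \eqref{theorem_1_1}: both bounds hold simultaneously for every $k \geq 1$, so we may take the better (smaller) of the two contraction factors, which amounts to replacing $\frac{2}{\kappa(1+\sqrt{\kappa})}$ and $\frac{1}{\kappa(1+C_0)}$ by their maximum $\frac{1}{\kappa}\max\{\frac{2}{1+\sqrt{\kappa}},\frac{1}{1+C_0}\}$; the common prefactor $e^{-\Psi(\bar{B}_0)/k}$ is untouched.

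Next I would derive \eqref{theorem_1_2} from \eqref{theorem_1_1} by restricting to $k \geq \Psi(\bar{B}_0)$. In that regime $\Psi(\bar{B}_0)/k \leq 1$, hence $e^{-\Psi(\bar{B}_0)/k} \geq e^{-1} \geq \tfrac{1}{3}$, and substituting this lower bound into \eqref{theorem_1_1} gives exactly the claimed rate $\bigl(1 - \tfrac{1}{3\kappa}\max\{\tfrac{2}{1+\sqrt{\kappa}},\tfrac{1}{1+C_0}\}\bigr)^k$. Note this is precisely the intermediate inequality already established inside the proof of Lemma~\ref{lemma_7} (the displayed bound just after ``we assume $k \geq \Psi(\bar{B}_0)$''), so this step is immediate.

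Finally, \eqref{theorem_1_3} is nothing but \eqref{lemma_7_2} of Lemma~\ref{lemma_7} restated verbatim: when $k \geq (1+C_0)\Psi(\bar{B}_0) + 3C_0\kappa\min\{2(1+C_0),1+\sqrt{\kappa}\}$, that lemma already gives the gradient-descent-matching rate $(1-\tfrac{1}{3\kappa})^k$. So the proof of the theorem is a short bookkeeping argument: invoke Lemma~\ref{lemma_6} and Lemma~\ref{lemma_7}, take maxima of the contraction factors, and plug in $e^{-\Psi(\bar{B}_0)/k}\geq \tfrac{1}{3}$ on the relevant range. There is no genuine obstacle here; the only point requiring a line of care is verifying that the two lemma bounds are valid \emph{for the same sequence of iterates} (they are, since both lemmas concern the iterates of BFGS with exact line search under the stated assumptions) so that taking the pointwise maximum of the two rates is legitimate.
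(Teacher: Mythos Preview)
Your proposal is correct and matches the paper's approach exactly: the paper presents Theorem~\ref{theorem_1} as a direct summary of Lemmas~\ref{lemma_6} and~\ref{lemma_7} without giving a separate proof, and the three steps you outline (take the maximum of the two contraction factors, lower-bound $e^{-\Psi(\bar{B}_0)/k}$ by $1/3$ for $k\ge\Psi(\bar{B}_0)$, and quote \eqref{lemma_7_2} verbatim) are precisely the implicit assembly the paper relies on.
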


In Theorem~\ref{theorem_1}, we present three distinct linear convergence rates during different phases of the BFGS algorithm with exact line search. Specifically, the linear rate in \eqref{theorem_1_1} is applicable from the first iteration, but the contraction factor depends on the quantity $e^{-\Psi(\bar{B}_0)/k}$, which can be exponentially small and thus imply a slow convergence rate. However,  this quantity will be bounded away from zero as the number of iterations $k$ increases, resulting in an improved linear rate. In particular, for $k \geq \Psi(\bar{B}_0)$, the quantity $e^{-\Psi(\bar{B}_0)/k}$ is bounded below by $1/3$, leading to the second improved  linear convergence rate in \eqref{theorem_1_2}. Furthermore, as shown in Lemma~\ref{lemma_7}, after an additional $C_0\Psi(\bar{B}_0) + 3C_0\kappa\min\{2(1 + C_0), 1 +\sqrt{\kappa}\}$ iterations, we achieve the last linear convergence rate in \eqref{theorem_1_3}, which is comparable to that of gradient descent.


From the discussions above, we observe that the quantity $\Psi(\bar{B}_0)$ (recall that $\bar{B}_0 = \frac{1}{L} B_0$) plays a critical role in determining the transitions between different linear convergence phases, and a smaller $\Psi(\bar{B}_0)$ implies fewer iterations required to reach each linear convergence phase. Thus, we consider a special case to simplify our bounds, where $B_0 = \alpha I$ and $\alpha > 0$ is an arbitrary positive scalar. Given this simplification, we obtain $\Psi(\bar{B}_0) = \Psi(\frac{\alpha}{L}I) = \frac{\alpha}{L}d - d + d\log{\frac{L}{\alpha}}$. We apply this to Theorem~\ref{theorem_1} to establish the corresponding global linear rates, as stated in Corollary~\ref{corollary_1}.

\begin{corollary}\label{corollary_1}
    Let $\{x_k\}_{k\geq 0}$ be the iterates generated by the BFGS method with exact line search and suppose that Assumptions~\ref{ass_str_cvx}, \ref{ass_smooth} and \ref{ass_Hess_lip} hold. For any initial point $x_0 \in \mathbb{R}^{d}$ and the initial Hessian approximation matrix {$B_0 = \alpha I$ with $\alpha > 0$,} we have the following global convergence rate for any $k \geq 1$,
    \begin{equation}\label{corollary_1_1}
        \frac{f(x_k) - f(x_*)}{f(x_0) - f(x_*)} \leq \left(1 - e^{-\frac{\frac{\alpha}{L}d - d + d\log{\frac{L}{\alpha}}}{k}}\frac{1}{\kappa}\max\left\{\frac{2}{1 + \sqrt{\kappa}}, \frac{1}{1 + C_0}\right\}\right)^{k}.
    \end{equation}
    When $k \geq d(\frac{\alpha}{L} - 1 + \log{\frac{L}{\alpha}})$, we have that
    \begin{equation}\label{corollary_1_2}
        \frac{f(x_k) - f(x_*)}{f(x_0) - f(x_*)} \leq \left(1 - \frac{1}{3\kappa}\max\left\{\frac{2}{1 + \sqrt{\kappa}}, \frac{1}{1 + C_0}\right\}\right)^{k}.
    \end{equation}
    Moreover, when $k \geq (1 + C_0)d(\frac{\alpha}{L} - 1 + \log{\frac{L}{\alpha}}) + 3C_0\kappa\min\{2(1 + C_0), 1 +\sqrt{\kappa}\}$, we have that
    \begin{equation}\label{corollary_1_3}
        \frac{f(x_k) - f(x_*)}{f(x_0) - f(x_*)} \leq \left(1 - \frac{1}{3\kappa}\right)^{k}.
    \end{equation}
\end{corollary}


The above corollary characterizes the behavior of BFGS with exact line search when the initial Hessian approximation is a scaled identity matrix. In the following paragraphs, we refine these results by analyzing the bounds for specific values of $\alpha$.

First, we examine two extreme cases for initialization: $\alpha = L$ (where $ B_0 = L I $) and $\alpha = \mu$ (where $B_0 = \mu I $). The former corresponds to an upper bound on the eigenvalues of the Hessian, while the latter uses a lower bound.
Note that in the case where $B_0 = LI$,  we have $d(\frac{\alpha}{L} - 1 + \log{\frac{L}{\alpha}}) = d(\frac{L}{L} - 1 + \log{\frac{L}{L}}) = 0$ and we achieve the global linear convergence rate in \eqref{corollary_1_2} for any $k \geq 1$. Moreover, when $k \geq 3C_0\kappa\min\{2(1 + C_0), (1 + \sqrt{\kappa})\}$, we reach the second linear convergence rate in \eqref{corollary_1_3}.
In the case where $B_0 = \mu I$, we have $d(\frac{\alpha}{L} - 1 + \log{\frac{L}{\alpha}}) = d(\frac{1}{\kappa} - 1 + \log{\kappa}) \leq d\log \kappa$. We have the following global convergence rate for any $k \geq 1$,
\begin{equation}
    \frac{f(x_k) - f(x_*)}{f(x_0) - f(x_*)} \leq \left(1 - e^{-\frac{d\log{\kappa}}{k}}\frac{1}{\kappa}\max\left\{\frac{2}{1 + \sqrt{\kappa}}, \frac{1}{1 + C_0}\right\}\right)^{k}.
\end{equation}
When $k \geq d\log{\kappa}$, we achieve the global linear convergence rate in \eqref{corollary_1_2}. Moreover, when $k \geq (1 + C_0)d\log{\kappa} + 3C_0\kappa\min\{2(1 + C_0), 1 +\sqrt{\kappa}\}$, we reach the second linear convergence rate in \eqref{corollary_1_3}.
Comparing the above results, we observe that BFGS with $B_0 = \mu I$ requires additional $d\log \kappa$ iterations to achieve a similar linear rate as in the first case. However, as we present in the next section, the choice of the initial Hessian approximation matrix $B_0 = \mu I$ could achieve a superlinear rate faster. This trade-off between the linear and superlinear convergence phase is the fundamental consequence of different choices of the initial Hessian approximation matrix in our convergence analysis.


While the special cases discussed above are valuable for theoretical comparison, they may not be practical for selecting the initial Hessian approximation, as the constants \(\mu\) and \(L\) are often unknown. A more practical choice, which can be easily computed, is to set \( B_0 = cI \), where \( c \) is determined based on gradient and variable differences between two randomly selected points. Specifically, \( c \) is given by $c = \frac{s^\top y}{\|s\|^2}$ where \( s = x_2 - x_1 \) and \( y = \nabla f(x_2) - \nabla f(x_1) \), with \( x_1 \) and \( x_2 \) being two randomly chosen vectors. This choice ensures that \( c \in [\mu, L] \). For this initialization of \( B_0 \), we can establish the bound $d\left(\frac{c}{L} - 1 + \log{\frac{L}{c}}\right) \leq d\log\kappa$. Applying this upper bound to our linear convergence result in Corollary~\ref{corollary_1}, we obtain that when $k \geq d\log\kappa$, we have the linear convergence rate in \eqref{corollary_1_2}. When $k \geq (1 + C_0)d\log\kappa + 3C_0\kappa\min\{2(1 + C_0), 1 +\sqrt{\kappa}\}$, we have the linear rate in \eqref{corollary_1_3}.

\section{Global superlinear convergence rates}\label{sec:superlinear}

In this section, we establish the non-asymptotic global superlinear convergence rate of BFGS with exact line search, employing a similar approach to the global linear convergence rate analysis from the previous section. We utilize the framework from Proposition~\ref{lemma_bound} and integrate the lower bounds from Lemmas~\ref{lemma_kappa}, \ref{lemma_eigenvalue}, \ref{lem:y^2/sy}, and Proposition~\ref{lemma_BFGS}. The key distinction lies in the choice of the weight matrix: instead of $P = LI$ used in the linear convergence analysis, we opt for $P = \nabla^2{f(x_*)}$ for the global superlinear convergence proof.

We define the weighted matrix $\tilde{B}_k$ as:
\begin{equation}\label{weighted_matrix_3}
    \tilde{B}_k = \nabla^2 f(x_*)^{-\frac{1}{2}} B_k \nabla^2 f(x_*)^{-\frac{1}{2}}, \qquad \text{ for}\  \ k\geq 0.
\end{equation}
In the following proposition, we first provide a general global convergence bound with an arbitrary initial Hessian approximation matrix $B_0 \in \mathbb{S}^d_{++}$. All the global superlinear convergence rates are based on the following proposition.

\begin{proposition}\label{proposition_3}
    Let $\{x_k\}_{k\geq 0}$ be the iterates generated by the BFGS method with exact line search and suppose that Assumptions~\ref{ass_str_cvx}, \ref{ass_smooth} and \ref{ass_Hess_lip} hold. Recall the definition of $C_k$ in \eqref{distance} and $\Psi(\cdot)$ in \eqref{potential_function}.
    For any initial point $x_0 \in \mathbb{R}^{d}$ and any initial Hessian approximation matrix $B_0 \in \mathbb{S}^d_{++}$, the following result holds for any $k \geq 1$,
    \begin{equation}\label{proposition_3_1}
        \frac{f(x_k) - f(x_*)}{f(x_0) - f(x_*)} \leq \left(\frac{\Psi(\Tilde{B}_0) + 4\sum_{i = 0}^{k - 1}C_i}{k}\right)^{k}.
    \end{equation}
\end{proposition}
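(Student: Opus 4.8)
The plan is to invoke the master inequality \eqref{eq:product} from Proposition~\ref{lemma_bound} with the weight matrix chosen as $P = \nabla^2 f(x_*)$, so that the relevant weighted Hessian approximation is $\tilde{B}_k$ as in \eqref{weighted_matrix_3}. Under this choice, the task reduces to lower bounding the product $\prod_{i=0}^{k-1}\frac{\hat{\alpha}_i \hat{q}_i}{\hat{m}_i}\cos^2(\hat{\theta}_i)$, and then translating that lower bound into the stated superlinear-type rate $\bigl(\frac{\Psi(\tilde{B}_0)+4\sum_{i=0}^{k-1}C_i}{k}\bigr)^k$. The key difference from the linear analysis is that here we do not discard the factors $C_i$; instead we carry them through since they decay along the trajectory and the product of $(1+C_i)^{-1}$-type terms is what produces the $\frac{1}{k}$-type behavior.

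First I would collect the bounds for the four constituent quantities under $P = \nabla^2 f(x_*)$: from Lemma~\ref{lemma_kappa}, $\hat{\alpha}_i \geq \frac{1}{2(1+C_i)}$; from Lemma~\ref{lemma_eigenvalue}(b), $\hat{q}_i \geq \frac{2}{(1+C_i)^2}$; from Lemma~\ref{lem:y^2/sy}(b), $\frac{\|\hat{y}_i\|^2}{\hat{s}_i^\top \hat{y}_i} \leq 1 + C_i$; and from Proposition~\ref{lemma_BFGS} (eq.~\eqref{eq:sum_of_logs} with $P = \nabla^2 f(x_*)$),
\begin{equation*}
\sum_{i=0}^{k-1}\log\frac{\cos^2(\hat{\theta}_i)}{\hat{m}_i} \geq -\Psi(\tilde{B}_0) + \sum_{i=0}^{k-1}\bigl(1 - (1+C_i)\bigr) = -\Psi(\tilde{B}_0) - \sum_{i=0}^{k-1} C_i,
\end{equation*}
so that $\prod_{i=0}^{k-1}\frac{\cos^2(\hat{\theta}_i)}{\hat{m}_i} \geq e^{-\Psi(\tilde{B}_0) - \sum_{i=0}^{k-1} C_i}$. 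Multiplying these together gives
\begin{equation*}
\prod_{i=0}^{k-1}\frac{\hat{\alpha}_i \hat{q}_i}{\hat{m}_i}\cos^2(\hat{\theta}_i) \geq \left(\prod_{i=0}^{k-1}\frac{1}{(1+C_i)^3}\right) e^{-\Psi(\tilde{B}_0) - \sum_{i=0}^{k-1} C_i}.
\end{equation*}
Plugging into \eqref{eq:product}, the rate becomes $\bigl[1 - \bigl(\prod_{i=0}^{k-1}\frac{\hat{\alpha}_i\hat{q}_i}{\hat{m}_i}\cos^2(\hat{\theta}_i)\bigr)^{1/k}\bigr]^k$, and using $1 - x \leq -\log x$ (valid for $x > 0$) I would bound
\begin{equation*}
\frac{f(x_k)-f(x_*)}{f(x_0)-f(x_*)} \leq \left(-\frac{1}{k}\log\prod_{i=0}^{k-1}\frac{\hat{\alpha}_i\hat{q}_i}{\hat{m}_i}\cos^2(\hat{\theta}_i)\right)^k \leq \left(\frac{\Psi(\tilde{B}_0) + \sum_{i=0}^{k-1} C_i + 3\sum_{i=0}^{k-1}\log(1+C_i)}{k}\right)^k.
\end{equation*}
Finally, using $\log(1+C_i) \leq C_i$, the bracketed numerator is at most $\Psi(\tilde{B}_0) + 4\sum_{i=0}^{k-1} C_i$, which yields \eqref{proposition_3_1}.

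The main obstacle, and the step requiring the most care, is the passage from the product bound to the clean form $(\cdots/k)^k$: one must justify the inequality $1 - \bigl(\prod(\cdot)\bigr)^{1/k} \leq -\frac{1}{k}\log\prod(\cdot)$ (i.e. $1 - e^{-t/k} \leq t/k$ with $t = -\log\prod(\cdot) \geq 0$, which is immediate from $1-e^{-u}\le u$), and then verify that this per-iteration logarithmic accounting correctly aggregates the three separate sources — the potential $\Psi(\tilde{B}_0)$, the sum $\sum C_i$ coming from the $\cos^2(\hat{\theta}_i)/\hat{m}_i$ term, and the $3\sum\log(1+C_i)$ coming from the $\hat{\alpha}_i\hat{q}_i$ factors — without any sign errors or double counting. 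Note also that this proposition deliberately leaves $\sum_{i=0}^{k-1}C_i$ unevaluated; converting it to an explicit iteration count (as in the theorem and Table~\ref{tab:comparison}) requires the linear-convergence bounds from Section~\ref{sec:linear} to control the tail of the $C_i$, but that is a separate downstream step and not part of this statement.
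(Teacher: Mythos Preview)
Your proposal is correct and follows essentially the same route as the paper's proof: choose $P=\nabla^2 f(x_*)$, combine the bounds $\hat{\alpha}_i\hat{q}_i\geq (1+C_i)^{-3}$ from Lemmas~\ref{lemma_kappa} and~\ref{lemma_eigenvalue}(b) with the potential inequality \eqref{eq:sum_of_logs} (using Lemma~\ref{lem:y^2/sy}(b)), and then pass from $1-(\prod)^{1/k}$ to $(\Psi(\tilde{B}_0)+4\sum C_i)/k$ via $1-e^{-u}\leq u$. The only cosmetic difference is ordering: the paper first converts $(1+C_i)^{-3}\geq e^{-3C_i}$ and then applies $1-e^{-u}\leq u$ at the end, whereas you first apply $1-x\leq -\log x$ and then bound $\log(1+C_i)\leq C_i$; these are the same inequalities rearranged.
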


\begin{proof}
    Recall that we choose the weight matrix as $P = \nabla^2 f(x_*)$ throughout the proof. From Lemma~\ref{lemma_kappa} and Lemma~\ref{lemma_eigenvalue}(b), we have $\hat{\alpha}_k \geq \frac{1}{2(1 + C_k)}$ and $\hat{q}_k \geq \frac{2}{(1 + C_k)^2}$. Hence, using the inequality $1+x \leq e^x$ for any $x\geq 0$, it follows that 
    \begin{equation}\label{eq:product_alpha_q}
        \prod_{i = 0}^{k - 1}(\hat{\alpha}_i \hat{q}_i) \geq \prod_{i = 0}^{k - 1} \frac{1}{(1+C_k)^3} \geq \prod_{i = 0}^{k - 1} e^{-3C_k} = e^{-3\sum_{i = 0}^{k - 1}C_i}.
    \end{equation}
    Moreover, by using the inequality \eqref{eq:sum_of_logs} in Proposition~\ref{lemma_BFGS} with $P = \nabla^2{f(x_*)}$, we obtain that
    \begin{equation*}
        \sum_{i = 0}^{k - 1} \log{\frac{\cos^2(\hat{\theta}_i)}{\hat{m}_i}} \geq  - \Psi(\Tilde{B}_{0}) + \sum_{i = 0}^{k - 1}\left(1-\frac{\|\hat{y}_i\|^2}{\hat{s}_i^\top \hat{y}_i} \right) \geq - \Psi(\Tilde{B}_{0}) - \sum_{i = 0}^{k - 1}C_i,
    \end{equation*}
    where in the last inequality we used the fact that $\frac{\|\hat{y}_i\|^2}{\hat{s}_i^\top \hat{y}_i} \leq 1 + C_i$ from Lemma~\ref{lem:y^2/sy}(b). This further implies that
    \begin{equation}\label{proposition_3_proof_3}
        \prod_{i = 0}^{k - 1} \frac{\cos^2(\hat{\theta}_i)}{\hat{m}_i} \geq e^{- \Psi(\Tilde{B}_{0}) - \sum_{i = 0}^{k - 1}C_i}.
    \end{equation}
    Combining \eqref{eq:product_alpha_q}, \eqref{proposition_3_proof_3}, and \eqref{eq:product} from Proposition~\ref{lemma_bound}, we prove that
    \begin{align*}
        \frac{f(x_{k}) - f(x_*)}{f(x_0) - f(x_*)} & \leq \left[1 - \left(\prod_{i = 0}^{k-1} \frac{\hat{\alpha}_i \hat{q}_i}{\hat{m}_i} \cos^2(\hat{\theta}_i)\right)^{\frac{1}{k}}\right]^{k} \\
        & \leq \left[1 - \left( e^{-3\sum_{i = 0}^{k - 1}C_i} e^{- \Psi(\Tilde{B}_{0}) - \sum_{i = 0}^{k - 1}C_i}\right)^{\frac{1}{k}}\right]^{k} \\
        & = \left(1 - e^{- \frac{\Psi(\Tilde{B}_{0}) + 4\sum_{i = 0}^{k - 1}C_i}{k}} \right)^{k} \leq \left(\frac{\Psi(\Tilde{B}_0) + 4\sum_{i = 0}^{k - 1}C_i}{k}\right)^{k},
    \end{align*}
    where the last inequality is due to the fact that $1 - e^{-x} \leq x$ for any $x$.
\end{proof}

The above global result shows that the error after $k$ iterations for the BFGS update with exact line search depends on the potential function of the weighted initial Hessian approximation matrix $\Tilde{B}_0$, i.e., $\Psi(\Tilde{B}_0)$, and the sum of weighted function value optimality gap, i.e., $\sum_{i = 0}^{k - 1}C_i$. This result forms the foundation of our superlinear result, since if we can demonstrate that the sum $\sum_{i = 0}^{k - 1}C_i$ is bounded above, it leads to a superlinear rate of the form $\mathcal{O}((1/k)^k)$. 

Having established the non-asymptotic global linear convergence rate of BFGS in the previous section, we can leverage it to show that the sum $\sum_{i = 0}^{k - 1}C_i$ is uniformly bounded above, allowing us to establish an explicit upper bound for this finite sum. In the following theorem, we apply the linear convergence results from Section~\ref{sec:linear} to prove the non-asymptotic global superlinear convergence rates of BFGS with exact line search for any initial Hessian approximation matrix $B_0 \in \mathbb{S}^d_{++}$.

\begin{theorem}\label{theorem_2}
    Let $\{x_k\}_{k\geq 0}$ be the iterates generated by the BFGS method with exact line search and suppose that Assumptions~\ref{ass_str_cvx}, \ref{ass_smooth} and \ref{ass_Hess_lip} hold. For any initial point $x_0 \in \mathbb{R}^{d}$ and any initial Hessian approximation matrix $B_0 \in \mathbb{S}^d_{++}$, we have the following superlinear convergence rate,
    \begin{equation}\label{theorem_2_1}
        \frac{f(x_k) - f(x_*)}{f(x_0) - f(x_*)} \leq \left(\frac{\Psi(\Tilde{B}_{0}) + 4C_0\Psi(\bar{B}_0) + 12C_0 \kappa\min\{2(1 + C_0), 1 +\sqrt{\kappa}\}}{k}\right)^{k},
    \end{equation}
    where $\bar{B}_0$ and $\tilde{B}_0$ are defined in \eqref{weighted_matrix_2} and \eqref{weighted_matrix_3}.
\end{theorem}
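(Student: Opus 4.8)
The plan is to feed a uniform (i.e.\ $k$-independent) bound on the partial sums $\sum_{i=0}^{k-1} C_i$ into the general superlinear estimate of Proposition~\ref{proposition_3}, and to obtain that bound directly from the global linear rates of Theorem~\ref{theorem_1}. Indeed, Proposition~\ref{proposition_3} already gives
$$\frac{f(x_k) - f(x_*)}{f(x_0) - f(x_*)} \le \left(\frac{\Psi(\tilde{B}_0) + 4\sum_{i=0}^{k-1} C_i}{k}\right)^{k}$$
for every $k \ge 1$, so it suffices to show $\sum_{i=0}^{k-1} C_i \le C_0\,\Psi(\bar{B}_0) + 3C_0\kappa\min\{2(1+C_0),\, 1+\sqrt{\kappa}\}$ for all $k\ge 1$; substituting this (and multiplying the right-hand side by $4$) reproduces exactly \eqref{theorem_2_1}. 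Note that this is precisely the estimate \eqref{eq:bound_C} derived inside the proof of Lemma~\ref{lemma_7}, so one option is simply to invoke \eqref{eq:bound_C}; for completeness I would reproduce the short argument.

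To bound $\sum_{i=0}^{k-1} C_i$, I would split the index range at $\Psi(\bar{B}_0)$. For $i < \Psi(\bar{B}_0)$, monotonicity of $\{f(x_i)\}$ from Lemma~\ref{lemma_line_search} gives $C_i \le C_0$, contributing at most $C_0\,\Psi(\bar{B}_0)$; this block alone already covers the case $k \le \Psi(\bar{B}_0)$. For $i \ge \Psi(\bar{B}_0)$, the linear rate \eqref{theorem_1_2} (valid once the iteration count is at least $\Psi(\bar{B}_0)$) yields $\frac{f(x_i)-f(x_*)}{f(x_0)-f(x_*)} \le \rho^i$ with contraction factor $\rho := 1 - \frac{1}{3\kappa}\max\{\frac{2}{1+\sqrt{\kappa}},\frac{1}{1+C_0}\}$. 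Since $C_i = C_0\sqrt{(f(x_i)-f(x_*))/(f(x_0)-f(x_*))} \le C_0\,\rho^{i/2}$, the geometric tail satisfies $\sum_{i\ge \Psi(\bar{B}_0)} C_i \le \frac{C_0}{1-\sqrt{\rho}}$. Finally, applying $\sqrt{1-x} \le 1 - \frac{x}{2}$ gives $1-\sqrt{\rho} \ge \frac{1}{6\kappa}\max\{\frac{2}{1+\sqrt{\kappa}},\frac{1}{1+C_0}\}$, hence $\frac{C_0}{1-\sqrt{\rho}} \le 3C_0\kappa\min\{1+\sqrt{\kappa},\, 2(1+C_0)\}$; adding the two blocks gives the claimed uniform bound, and plugging into the displayed inequality above finishes the proof (the step $1-e^{-x}\le x$ is already absorbed inside Proposition~\ref{proposition_3}).

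The argument is essentially bookkeeping once Proposition~\ref{proposition_3} and Theorem~\ref{theorem_1} are available, so I do not expect a genuinely hard step. The two points requiring mild care are: (i) stating the bound on $\sum_{i=0}^{k-1} C_i$ \emph{uniformly} in $k$ via the $k \le \Psi(\bar{B}_0)$ / $k > \Psi(\bar{B}_0)$ split, so that $\bigl(\frac{\Psi(\tilde{B}_0)+4\sum_{i=0}^{k-1} C_i}{k}\bigr)^k \le \bigl(\frac{\Psi(\tilde{B}_0)+4C_0\Psi(\bar{B}_0)+12C_0\kappa\min\{2(1+C_0),1+\sqrt{\kappa}\}}{k}\bigr)^k$ holds for \emph{all} $k\ge 1$ (for $k$ below the numerator the right-hand side exceeds $1$ and the claim is vacuous, consistent with Lemma~\ref{lemma_line_search}); and (ii) the harmless rounding of the non-integer threshold $\Psi(\bar{B}_0)$ when splitting the sum, which only affects absolute constants. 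Beyond this I anticipate no obstacle.
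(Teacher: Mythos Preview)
Your proposal is correct and follows essentially the same route as the paper: invoke the bound \eqref{eq:bound_C} on $\sum_{i=0}^{k-1}C_i$ (derived via the split at $\Psi(\bar{B}_0)$ and the linear rate \eqref{theorem_1_2}) and substitute it into Proposition~\ref{proposition_3}. If anything, you are slightly more careful than the paper in explicitly handling the case $k<\Psi(\bar{B}_0)$ and flagging the non-integer threshold issue, both of which the paper glosses over.
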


\begin{proof}
    From \eqref{eq:bound_C} in Lemma~\ref{lemma_7}, we know that for $k \geq 1$,
    \begin{equation}\label{theorem_2_proof_1}
        \sum_{i = 0}^{k - 1}C_i \leq C_0 \Psi(\bar{B}_0) + 3C_0\kappa\min\{2(1 + C_0), 1 +\sqrt{\kappa}\}).
    \end{equation}
    Leveraging \eqref{theorem_2_proof_1} and \eqref{proposition_3_1} in Lemma~\ref{proposition_3}, we prove that for $k \geq 1$,
    \begin{equation*}
    \begin{split}
        \frac{f(x_k) - f(x_*)}{f(x_0) - f(x_*)} & \leq \left(\frac{\Psi(\Tilde{B}_0) + 4\sum_{i = 0}^{k - 1}C_i}{k}\right)^{k} \\ 
        & \leq \left(\frac{\Psi(\Tilde{B}_{0}) + 4C_0\Psi(\bar{B}_0) + 12C_0 \kappa\min\{2(1 + C_0), 1 +\sqrt{\kappa}\}}{k}\right)^{k},
    \end{split}
    \end{equation*}
    and the proof is complete. 
\end{proof}

This result indicates that BFGS with exact line search achieves a superlinear convergence rate when the number of iterations satisfies the condition $k\geq \Psi(\Tilde{B}_{0}) + 4C_0\Psi(\bar{B}_0) + 12C_0 \kappa\min\{2(1 + C_0), 1 +\sqrt{\kappa}\}$. The initial matrix $B_0$ critically influences the required iterations to attain this rate, as it appears in the numerator of the upper bound through $\Tilde{B}_{0}= \nabla^2 f(x_*)^{-\frac{1}{2}} B_0 \nabla^2 f(x_*)^{-\frac{1}{2}}$ and $\bar{B}_0=(1/L)B_0$. Thus, different choices of $B_0$ yield different values for $\Psi(\Tilde{B}_{0}) + 4C_0\Psi(\bar{B}_0)$, affecting the number of iterations required for superlinear convergence. Indeed, one can try to optimize the choice of $B_0$ to make the expression $\Psi(\Tilde{B}_{0}) + 4C_0\Psi(\bar{B}_0)$ as small as possible. 

Now we consider the special case where $B_0 = \alpha I$ with \( \alpha > 0 \) as any positive constant. For this case, we have $\Psi(\bar{B}_0) = \Psi(\frac{\alpha}{L}I) = \frac{\alpha}{L}d - d + d\log{\frac{L}{\alpha}}$ and $\Psi(\tilde{B}_0) = \Psi(\alpha \nabla^2{f(x_*)^{-1}}) = \alpha\mathbf{Tr}(\nabla^2{f(x_*)^{-1}}) - d - \log{\mathbf{Det}(\alpha\nabla^2{f(x_*)^{-1}})} \leq d(\frac{\alpha}{\mu} - 1 + \log{\frac{L}{\alpha}})$ from Assumptions~\ref{ass_str_cvx} and \ref{ass_smooth}. 
Applying these bounds in Theorem~\ref{theorem_2}, we obtain the following corollary.

\begin{corollary}\label{corollary_2}
    Let $\{x_k\}_{k\geq 0}$ be the iterates generated by the BFGS method with exact line search and suppose that Assumptions~\ref{ass_str_cvx}, \ref{ass_smooth} and \ref{ass_Hess_lip} hold. For any initial point $x_0 \in \mathbb{R}^{d}$ and the initial Hessian approximation matrix $B_0 = \alpha I$ with $\alpha > 0$, we have the following superlinear convergence rate,
    \begin{equation}
    \begin{split}
        & \frac{f(x_k) - f(x_*)}{f(x_0) - f(x_*)} \leq \\
        & \left(\frac{d(\frac{\alpha}{\mu} - 1 + \log{\frac{L}{\alpha}}) + 4C_0d(\frac{\alpha}{L} - 1 + \log{\frac{L}{\alpha}}) + 12C_0 \kappa\min\{2(1 + C_0), 1 +\sqrt{\kappa}\}}{k}\right)^{k}.
    \end{split}
    \end{equation}
\end{corollary}

The above corollary characterizes the non-asymptotic superlinear convergence rate of BFGS with exact line search when the initial Hessian approximation is an identity matrix multiplied by a constant $\alpha$. Similar to the linear convergence analysis, we present the superlinear convergence rates for specific values of $\alpha$ in the following paragraphs.

When $\alpha = L$ ($B_0 = L I$), we have $\Psi(\bar{B}_0) = d(\frac{\alpha}{L} - 1 + \log{\frac{L}{\alpha}}) = 0$ and $\Psi(\Tilde{B}_{0}) = d(\frac{\alpha}{\mu} - 1 + \log{\frac{L}{\alpha}}) \leq d\kappa$. Hence, we obtain the superlinear convergence rate
\begin{equation*}
    \frac{f(x_k) - f(x_*)}{f(x_0) - f(x_*)} \leq \left(\frac{d\kappa + 12C_0\kappa\min\{2(1 + C_0), (1 + \sqrt{\kappa})\}}{k}\right)^{k}.
\end{equation*}
Similarly, when $\alpha = \mu$ ($B_0 = \mu I$), we have $\Psi(\bar{B}_{0}) = d(\frac{\alpha}{L} - 1 + \log{\frac{L}{\alpha}}) \leq d\log\kappa$ and $\Psi(\Tilde{B}_{0}) = d(\frac{\alpha}{\mu} - 1 + \log{\frac{L}{\alpha}}) \leq d\log\kappa$. This leads to the superlinear convergence rate
\begin{equation*}
    \frac{f(x_k) - f(x_*)}{f(x_0) - f(x_*)} \leq \left(\frac{(1 + 4C_0)d\log{\kappa} + 12C_0\kappa\min\{2(1 + C_0), 1 +\sqrt{\kappa}\}}{k}\right)^{k}.
\end{equation*}
As shown in the above two results, choosing $B_0 = L I$ minimizes $\Psi(\bar{B}_0)$, resulting in $\Psi(\bar{B}_0)=0$. However, $\Psi(\Tilde{B}_{0})$ in this case could be as large as $d\kappa$. On the other hand, setting $B_0 = \mu I$ yields a more favorable upper bound, ensuring that both $\Psi(\bar{B}_0)$ and $\Psi(\Tilde{B}_{0})$ are bounded by $d\log \kappa$.
Hence, initializing the Hessian approximation with $B_0 = \mu I$ instead of $B_0 = LI$ could result in fewer iterations to reach the superlinear convergence phase. 
Generally, during the initial linear convergence stage, the iterates generated by the BFGS method with $B_0 = L I$ outperform those with $B_0 = \mu I$, due to a faster linear convergence speed. However, the BFGS method with $B_0 = \mu I$ transitions to the ultimate superlinear convergence phase in fewer iterations compared to $B_0 = L I$. This phenomenon has also been observed in our experiments in Section~\ref{sec:experiments}. 

As in the linear convergence analysis, we also consider the practical initial Hessian approximation: \( B_0 = cI \), where \( c \) is $\frac{s^\top y}{\|s\|^2}$, with $s = x_2 - x_1$, $y = \nabla f(x_2) - \nabla f(x_1)$, and $x_1, x_2$ as two random vectors. For this choice of $B_0$, we can derive the following upper bounds: \(\Psi(\bar{B}_0) \leq d\left(\frac{c}{L} - 1 + \log{\frac{L}{c}}\right) \leq d\log\kappa\) and \(\Psi(\tilde{B}_0) \leq d\left(\frac{c}{\mu} - 1 + \log{\frac{L}{c}}\right) \leq 2d\kappa\). Applying these values of $\Psi(\bar{B}_0)$ and $\Psi(\tilde{B}_0)$ to our superlinear convergence result in Corollary~\ref{corollary_2}, we can obtain the following convergence guarantees for $B_0 = c I$: 
\begin{equation}
    \frac{f(x_k) - f(x_*)}{f(x_0) - f(x_*)} \leq \left(\frac{2d\kappa + 4C_0d\log\kappa + 12C_0 \kappa\min\{2(1 + C_0), 1 +\sqrt{\kappa}\}}{k}\right)^{k}.
\end{equation}

While all of our presented results are global and do not impose any initial condition on $x_0$, in the following remark, we present a potential local result when $B_0 = \mu I$.

\begin{remark} 
Consider the scenario where BFGS starts at a point $x_0$ near the optimal solution $x_*$ such that the initial error condition $C_0 = \mathcal{O}( {1}/{\sqrt{\kappa}})$ is satisfied, i.e., $f(x_0)-f(x_*) = \mathcal{O}( \frac{\mu^4}{M^2{L }})$. In this case, we can establish that $(1 + 4C_0)d\log{\kappa} = \mathcal{O}(d\log{\kappa})$ and $C_0 \kappa\min\{1 + C_0, \sqrt{\kappa}\} = \mathcal{O}(1)$. Thus, when $B_0 = \mu I$, we obtain the local superlinear convergence rate of $\mathcal{O}(\frac{d\log{\kappa}}{k})^{k}$, which aligns with the local convergence result in \cite{rodomanov2020ratesnew}. It is noteworthy that the local result in \cite{rodomanov2020ratesnew} relied on a unit step size, while our local side result is derived using exact line search.
\end{remark}

\section{Discussions}\label{sec:discussions}

\noindent\textbf{Comparison with local non-asymptotic analysis.} In this section, we discuss the recent non-asymptotic local convergence results for BFGS and DFP in \cite{rodomanov2020rates,rodomanov2020ratesnew,qiujiang2020quasinewton} and explain why these results cannot be easily extended to achieve global complexity bounds. 

To begin with, note that these results are crucially based on local analysis and only apply when the iterates are close to the optimal solution $x_*$ and the step size $\eta_k$ is set to 1 in this local region. Therefore, to extend their results into a global convergence guarantee, one plausible strategy is to employ a line search scheme to ensure global convergence, and then switch to the local analysis when the iterates enter the region of local convergence. However, this approach faces several challenges. 
 
First, it remains unclear how to explicitly upper bound the number of iterations until the line search subroutine accepts the unit step size $\eta_k = 1$. Moreover, assume that the iterates enter the region of local convergence after $k_0$ iterations and we have $\eta_k = 1$ for all $k \geq k_0$. Even then, there is no guarantee that the Hessian approximation matrix $B_{k_0}$ will satisfy the necessary conditions required for the local analysis in \cite{rodomanov2020rates,rodomanov2020ratesnew,qiujiang2020quasinewton}. Specifically, for the analysis in \cite{qiujiang2020quasinewton} to hold, $B_{k_0}$ must be sufficiently close to the exact Hessian matrix, which is not satisfied in general. Regarding \cite{rodomanov2020ratesnew,rodomanov2020rates}, we note that their analyses depend on the condition number of $B_{k_0}$, which could be exponentially large and thus render the superlinear rate meaningless. To be more concrete, inspecting the proofs in \cite[Lemma 5.4]{rodomanov2020ratesnew} and \cite[Theorem 4.2]{rodomanov2020rates} reveals that the superlinear convergence rate occurs when $k = \Omega(\Psi(\check{B}_{k_0}^{-1}))$ and $k = \Omega(\Psi(\check{B}_{k_0}))$, respectively, where $\check{B}_{k_0} = J_{k_0}^{-{1}/{2}}B_{k_0}J_{k_0}^{-{1}/{2}}$ with $J_{k_0}$ defined in  \eqref{def_J} and $\Psi(\cdot)$ is the potential function defined in \eqref{potential_function}.  Consequently, it is essential to establish bounds for the smallest and largest eigenvalues of $\check{B}_{k_0}$. However, the current theory indicates (see e.g. \cite[Theorem 4.1]{rodomanov2020rates}) that $ e^{-2\kappa M \lambda_0} I \preceq \check{B}_{k_0} \preceq e^{2\kappa M \lambda_0} I$, where $\lambda_0 = \|(\nabla^2 f(x_0))^{-\frac{1}{2}}\nabla f(x_0)\|$ denotes the initial Newton decrement. This suggests that without a sufficiently small $\lambda_0$, the extreme eigenvalues of $\check{B}_{k_0}$ will be exponentially dependent on the condition number $\kappa$, leading to $\Psi(\check{B}_{k_0}^{-1}), \Psi(\check{B}_{k_0}) = \Omega(d e^{2\kappa M \lambda_0})$. Hence, a superlinear rate will be achieved only after $\Omega(d e^{2\kappa M \lambda_0})$ iterations.

Our convergence framework also diverges significantly from the previous works \cite{rodomanov2020rates,rodomanov2020ratesnew,qiujiang2020quasinewton} in terms of the proof strategy. Specifically, the approach in the aforementioned studies employs an induction argument to control the largest and smallest eigenvalues of the Hessian approximation matrix $B_k$ and prove a local linear convergence rate. In comparison, as presented in Sections~\ref{sec:linear} and \ref{sec:superlinear}, we prove global linear and superlinear convergence rates without explicitly establishing upper or lower bounds on the eigenvalues of $B_k$. This marks a notable departure from the local convergence analysis in \cite{rodomanov2020rates}, \cite{rodomanov2020ratesnew}, and \cite{qiujiang2020quasinewton}. 

\vspace{2mm}

\noindent\textbf{Comparison with global asymptotic analysis.} As mentioned in Section~\ref{sec:basic}, our convergence analysis framework resembles the approach taken in~\cite{Powell,byrd1987global,QN_tool} for proving asymptotic linear convergence rates of classical quasi-Newton methods such as BFGS and DFP. While these works considered inexact line search schemes and thus are different from our exact line search setting, they used a similar inequality as \eqref{eq:product} in Proposition~\ref{lemma_bound} to express the convergence rate in terms of the angle $\hat{\theta}_k$. Moreover, the authors in \cite{Powell} and \cite{byrd1987global} analyzed the traces and the determinants of the Hessian approximation matrices $\{B_k\}_{k \geq 0}$ separately to lower bound $\prod_{i = 0}^{k - 1}\cos{(\hat{\theta}_i)}$. Later, this process was simplified in \cite{QN_tool} by introducing the potential function $\Psi(\cdot)$ given in \eqref{potential_function}, combining the trace and determinant together as in our Proposition~\ref{lemma_BFGS}. However, since their main focus is on asymptotic convergence, we note that these previous works only demonstrate that $(\prod_{i = 0}^{k - 1}\cos{(\hat{\theta}_i)})^{{1}/{k}}$ is lower bounded by a constant, without giving an explicit form. Furthermore, our work builds upon previous analyses by incorporating a weight matrix $P$, while earlier works correspond to setting $P = I$. Another notable difference is that we keep the term $\hat{m}_k$ and lower bound the term $\cos^2(\hat{\theta}_k)/\hat{m}_k$ as shown in Proposition~\ref{lemma_BFGS}, whereas previous works relied on a looser bound for $\hat{m}_k$.  These refinements enable us to provide a tighter linear convergence rate for the BFGS method. 

On the other hand, in demonstrating superlinear convergence, our approach deviates significantly from that of \cite{Powell,byrd1987global,QN_tool}. Specifically, the previous works relied on the Dennis-Mor\'e condition, i.e., $\lim_{k \to \infty}\frac{\|(B_k - \nabla^2{f(x_*)})s_k\|}{\|s_k\|} = 0$, to establish asymptotic superlinear convergence. In comparison, we use the same framework outlined in Section~\ref{sec:basic} to establish both linear and superlinear convergence rates. The key distinction lies in the choice of the weight matrix $P$: we choose $P = L I$ for showing linear convergence and $P = \nabla^2 f(x_*)$ for showing superlinear convergence. Thus, we provide a unified framework for studying the global non-asymptotic convergence of BFGS. 

\section{Numerical experiments}\label{sec:experiments}

In this section, we present our numerical experiments to corroborate our convergence rate guarantees, and in particular, we explore the difference between the convergence paths of BFGS under different initializations of $B_0$. We further compare these variants of BFGS implementations with the gradient descent algorithm when deployed with exact line search. In our numerical experiments, all the step sizes used in BFGS with different $B_0$ and gradient descent are computed by the exact line search condition defined in \eqref{exact_line_search}. Specifically, we use MATLAB's ``fminsearch'' function from its optimization toolbox to determine the exact line search step size for all algorithms. In our experiments, all initial points are chosen as random vectors in the corresponding Euclidean vector spaces. 

We focus on a hard cubic objective function defined in \cite[Section 5]{hard_cubic}, i.e.,
\begin{equation}
    f(x) = \frac{\alpha}{12}\left(\sum_{i = 1}^{d - 1}g(v_i^\top x - v_{i + 1}^\top x) - \beta v_1^\top x\right) + \frac{\lambda}{2}\|x\|^2,
\end{equation}
and $g: \mathbb{R} \to \mathbb{R}$ is defined as
\begin{equation}
    g(w) =
    \begin{cases}
        \frac{1}{3}|w|^3 & |w| \leq \Delta, \\
        \Delta w^2 - \Delta^2 |w| + \frac{1}{3}\Delta^3  & |w| > \Delta,
    \end{cases}       
\end{equation}
where $\alpha, \beta, \lambda, \Delta \in \mathbb{R}$ are hyper-parameters and $\{v_i\}_{i = 1}^{n}$ are standard orthogonal unit vectors in $\mathbb{R}^{d}$. This hard cubic function is used to establish a lower bound for second-order methods. 

In Figure 1, we compare the gradient descent method and BFGS with different initialization of $B_0$: $B_0 = LI$, $B_0 = \mu I$, $B_0 = 10LI$, $B_0 = 0.1\mu I$, $B_0 = \sqrt{L\mu}I$ and $B_0 = cI$ where $c = \frac{s^\top y}{\|s\|^2}$. 
Here, $s = x_2 - x_1$, $y = \nabla f(x_2) - \nabla f(x_1)$, and $x_1, x_2$ as two randomly selected vectors. Note that $c \in [\mu, L]$ and the choice of $B_0 = cI$ is the most commonly used initial Hessian approximation matrix in practice \cite{nocedal2006numerical}. In (a), (b), and (c) of Figure~\ref{fig:1},  
we vary the problem's dimension while keeping the condition number as 1,000. Conversely, in (d), (e), and (f),  
we fix the problem's dimension as 600 and vary the condition number.

\begin{figure}
    \centering
    \subfigure[$d=50$, $\kappa=10^3$, $L=1$, $\mu=10^{-3}$, $c=10^{-1}$.]{\includegraphics[width=0.32\linewidth]{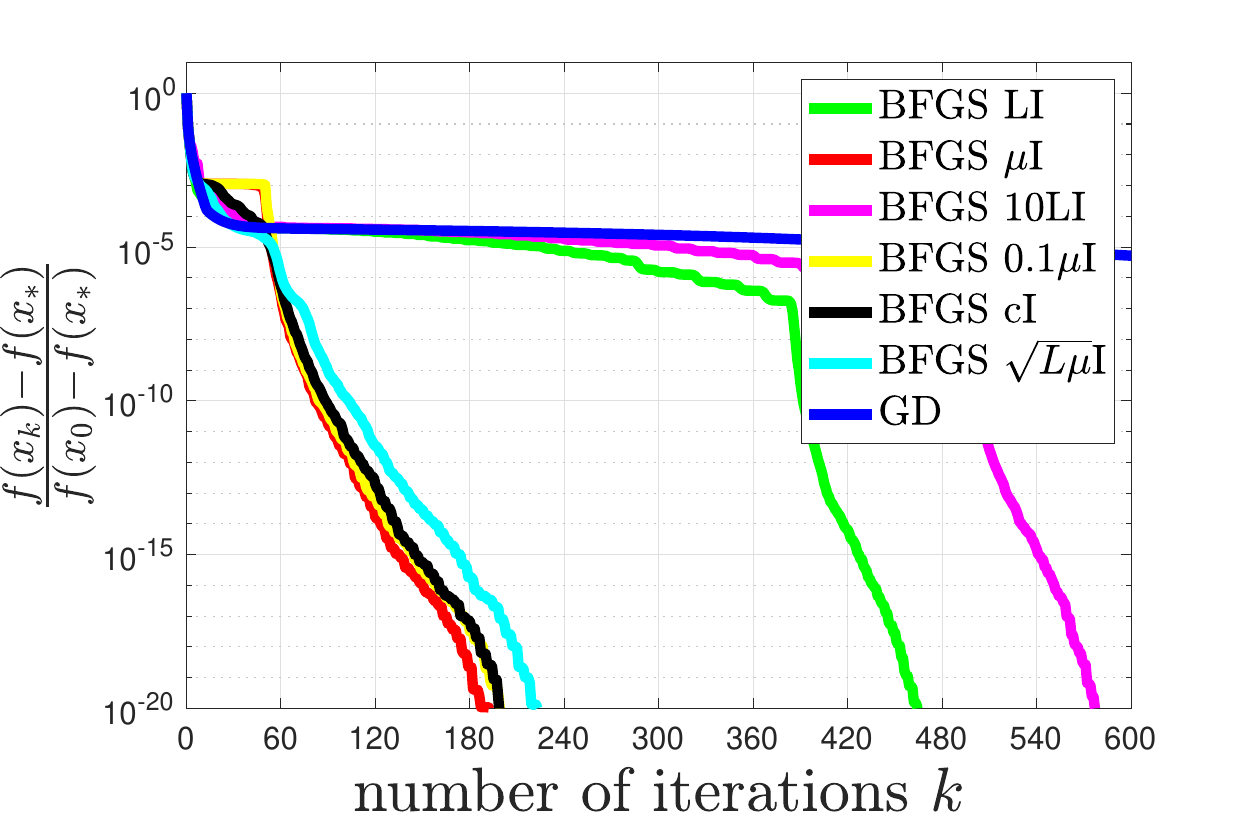}}
    \subfigure[$d=500$, $\kappa=10^3$, $L=1$, $\mu=10^{-3}$, $c=10^{-1}$.]{\includegraphics[width=0.32\linewidth]{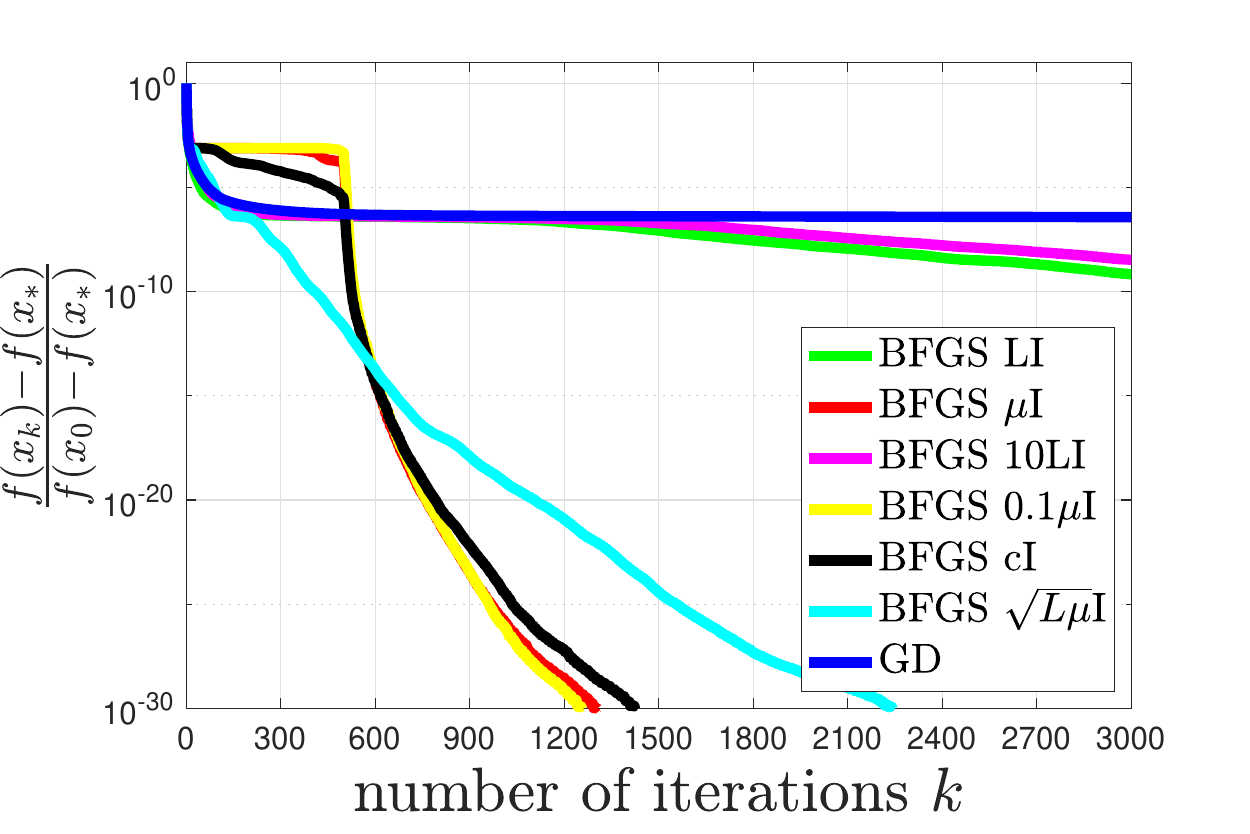}}
    \subfigure[$d=2000$, $\kappa=10^3$, $L=1$, $\mu=10^{-3}$, $c=10^{-1}$.]{\includegraphics[width=0.32\linewidth]{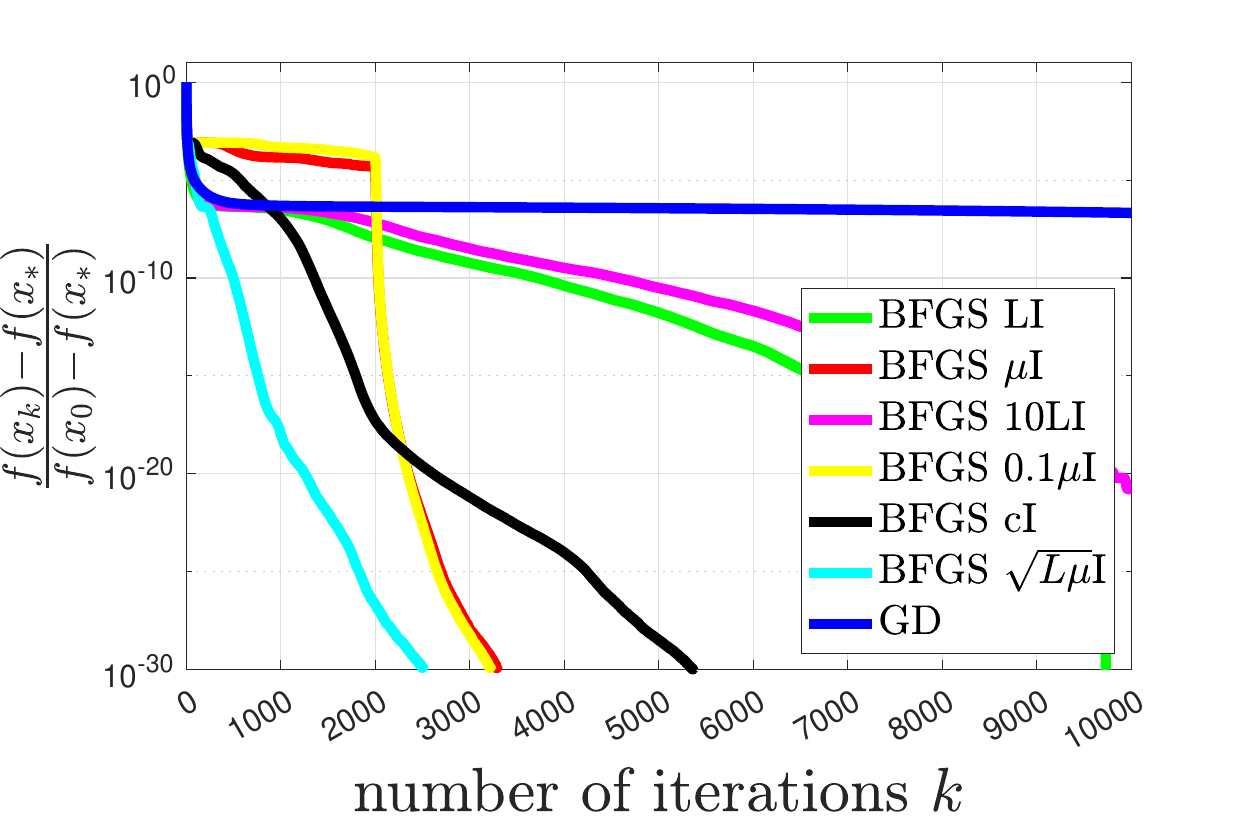}}
    \subfigure[$d=600$, $\kappa=10^2$, $L=1$, $\mu=10^{-2}$, $c=1$.]{\includegraphics[width=0.32\linewidth]{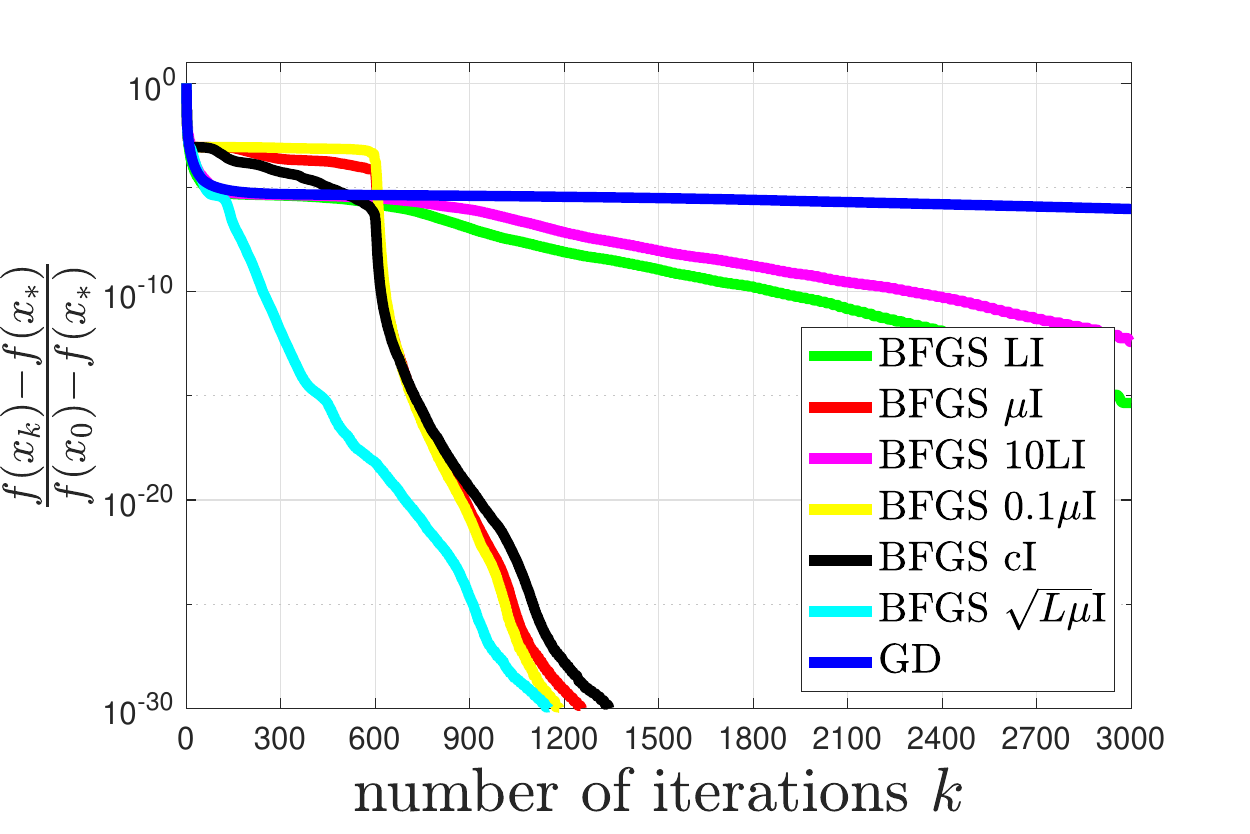}}
    \subfigure[$d=600$, $\kappa=10^3$, $L=1$, $\mu=10^{-3}$, $c=10^{-1}$.]{\includegraphics[width=0.32\linewidth]{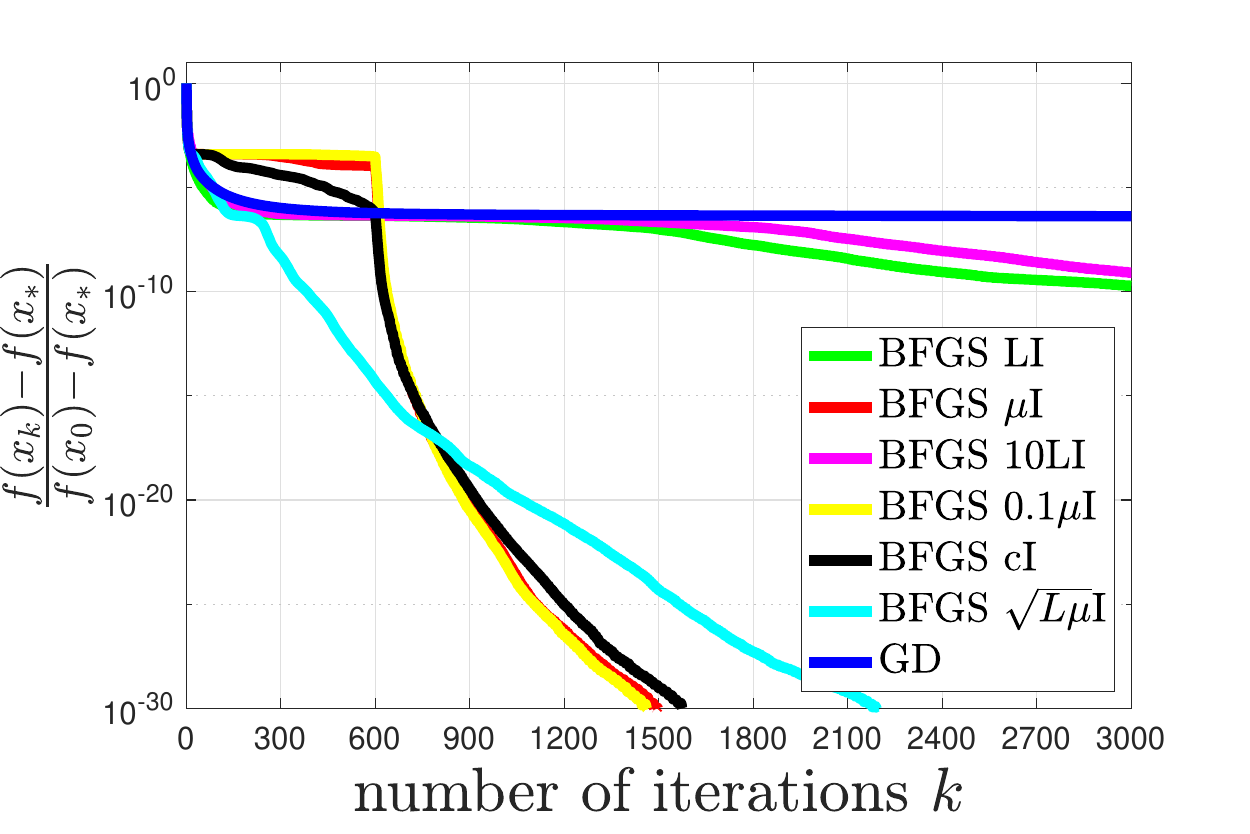}}
    \subfigure[$d=600$, $\kappa=10^4$, $L=1$, $\mu=10^{-4}$, $c=10^{-1}$.]{\includegraphics[width=0.32\linewidth]{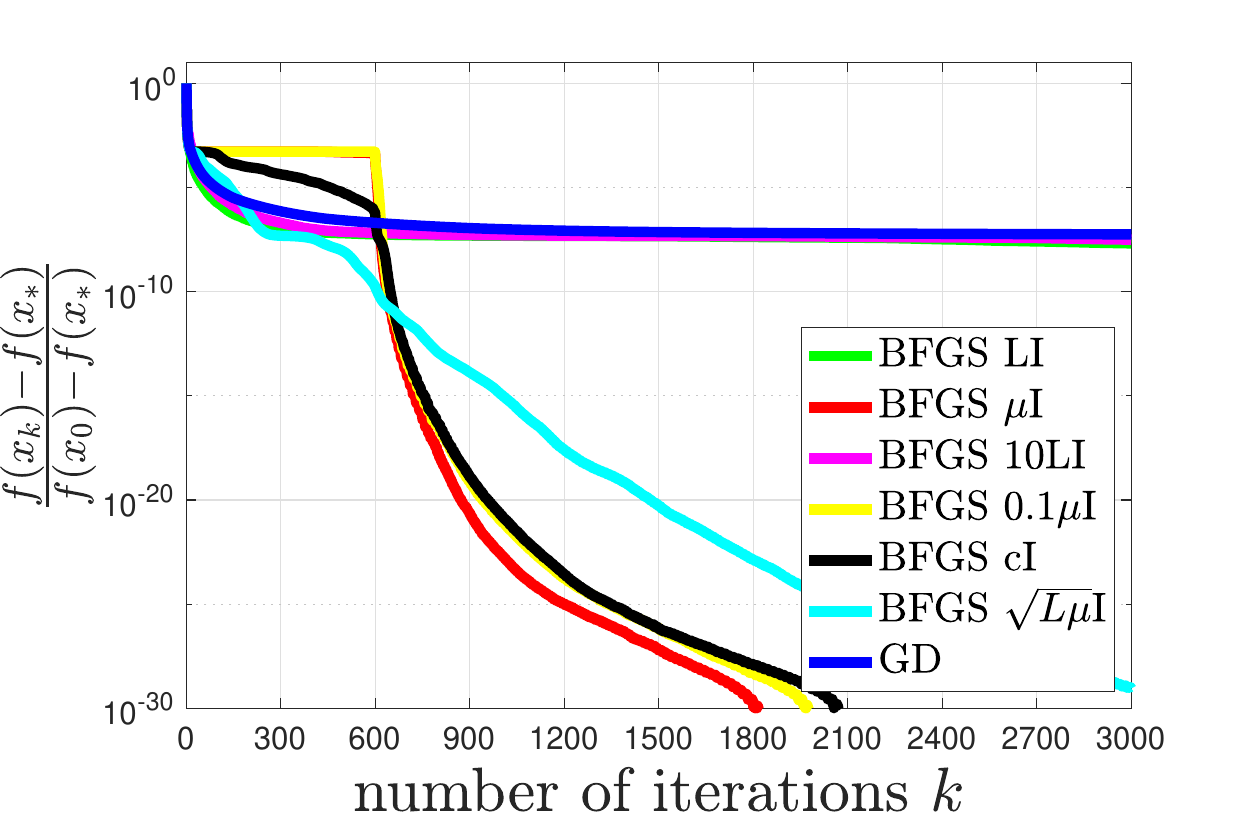}}
    \caption{Convergence rates of BFGS with different $B_0$ and gradient descent for solving the hard cubic objective function when condition number and dimension is varied.}\label{fig:1}
\end{figure}

Several observations are in order. 
\begin{itemize}
    \item BFGS with $B_0 = L I$ initially converges faster than BFGS with $B_0 = \mu I$ in most plots, aligning with our theoretical findings that the linear convergence rate of BFGS with $B_0 = L I$ surpasses that of $B_0 = \mu I$.
    \item The transition to superlinear convergence for BFGS with $B_0 = \mu I$ typically occurs around $k\approx d$, as predicted by our theoretical analysis. Interestingly, this transition does not always coincide with the iterates approaching the solution's local neighborhood; in many cases, it occurs for BFGS with $B_0 = \mu I$ even when its error is larger than that of gradient descent. 
    \item Although BFGS with $B_0 = L I$ initially converges faster, its transition to superlinear convergence consistently occurs later than for $B_0 = \mu I$. Notably, for a fixed dimension $d=600$, the transition to superlinear convergence for $B_0 = L I$ occurs increasingly later as the problem condition number rises, an effect not observed for $B_0 = \mu I$. This phenomenon indicates that the superlinear rate for $B_0 = L I$ is more sensitive to the condition number $\kappa$, which corroborates our theory that the number of iterations required for superlinear convergence is $\mathcal{O}(d\kappa)$ for $B_0 = L I$ and is improved to $\mathcal{O}(d\log{\kappa})$ for $B_0 = \mu I$.
    \item We observe that the performance of BFGS with $B_0=10LI$ is slightly worse than with $B_0 =LI$, while the convergence curve of BFGS with $B_0 = 0.1\mu I$ is almost identical to that with $B_0 = \mu I$. 
    Moreover, the convergence behavior of BFGS with $B_0 = \sqrt{L\mu}I$ is generally similar to that with $B_0 = \mu I$, but it may become slower when the number of iterations is large.
    \item 
    Finally, we observe that BFGS with $B_0 = cI$ initially converges slower than the case where $B_0 = L I$, but faster than $B_0 = \mu I$. After approximately $d$ iterations, the convergence rate of BFGS with $B_0 =cI$ surpasses that of $B_0 = L I$, while being slightly slower than the case where $B_0 = \mu I$. This phenomenon is consistent with the fact that $c \in [\mu, L]$, 
indicating that the performance of $B_0 = c I$ should fall between the performance of $B_0 = L I$ and $B_0 = \mu I$.
These findings align with our theoretical analysis of the trade-off between global linear and superlinear convergence rates for different initial Hessian approximation matrices, as discussed in Sections~\ref{sec:linear} and \ref{sec:superlinear}.
\end{itemize}


\begin{figure}
    \centering
    \subfigure[$d:50$, $\kappa:10^3$, $c:10^{-1}$.]{\includegraphics[width=0.32\linewidth]{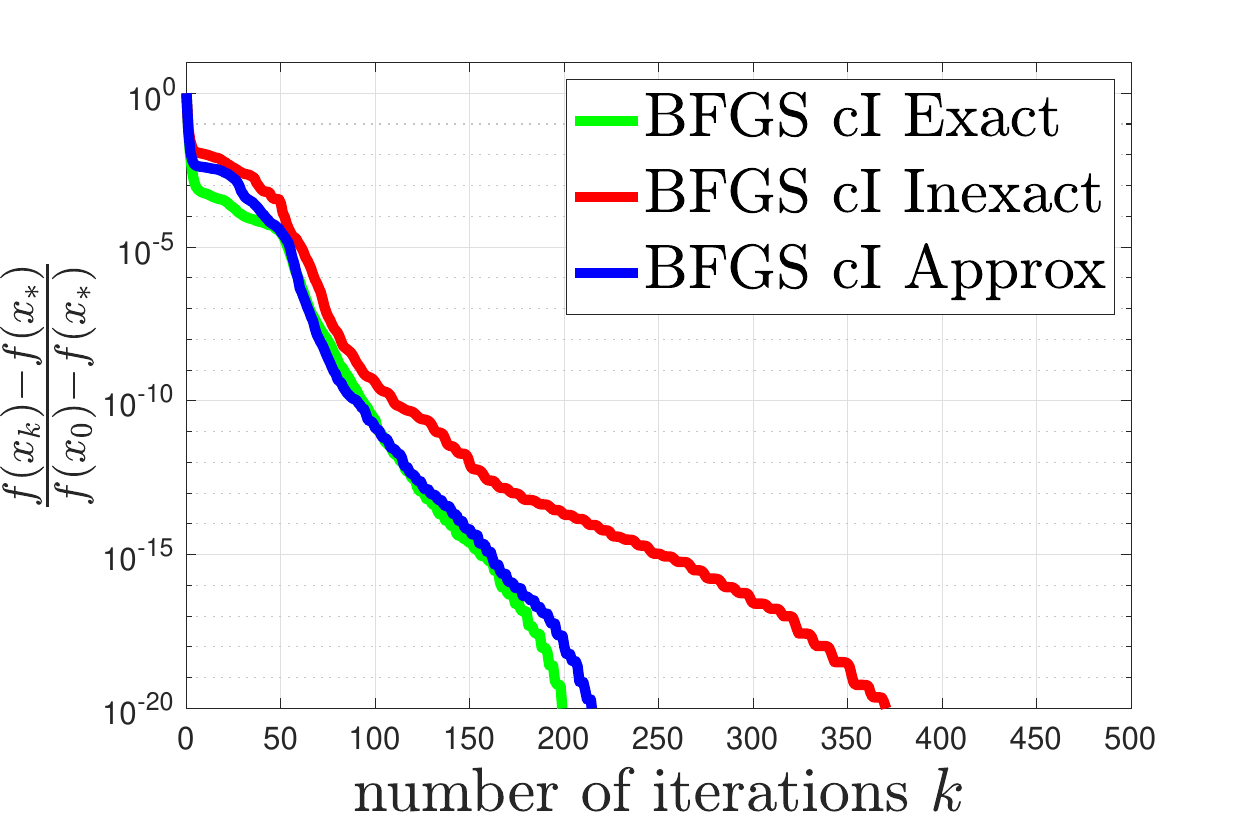}}
    \subfigure[$d:500$, $\kappa:10^3$, $c:10^{-1}$.]{\includegraphics[width=0.32\linewidth]{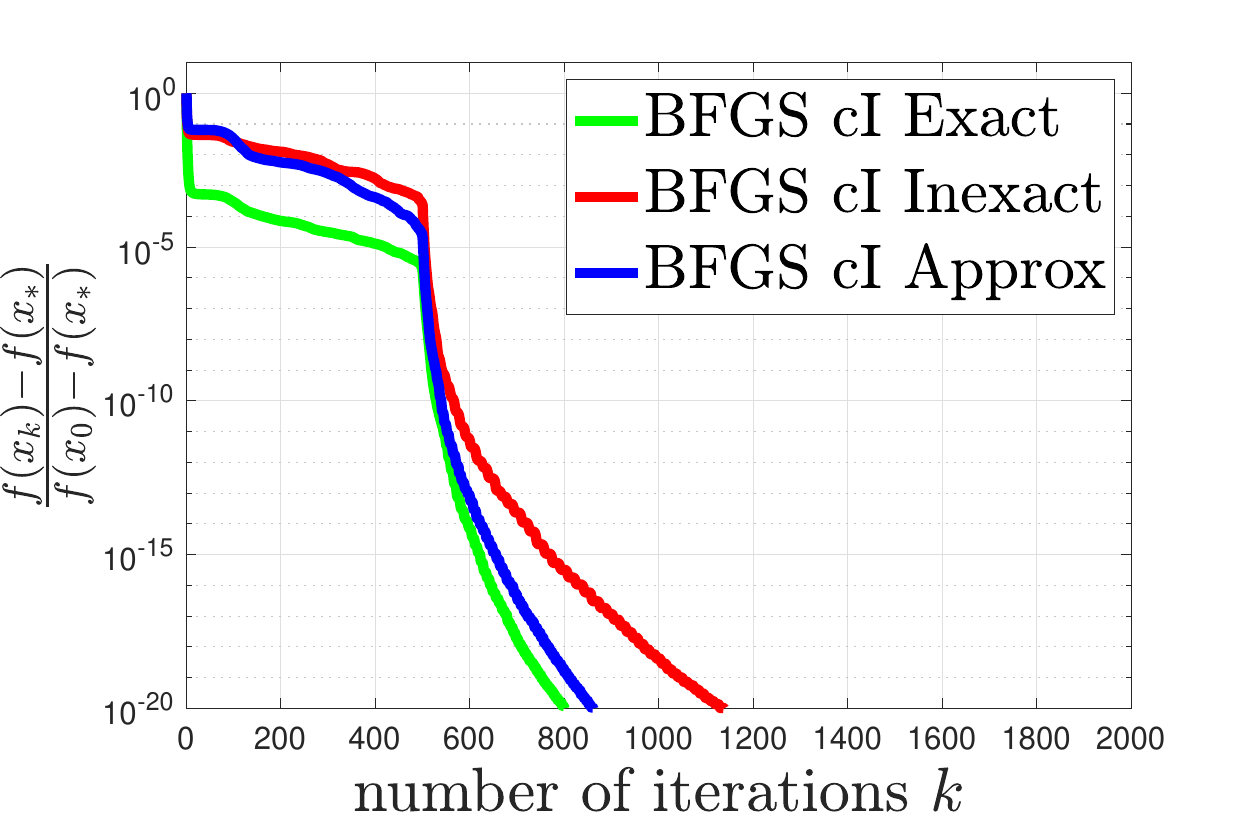}}
    \subfigure[$d:2000$, $\kappa:10^3$, $c:10^{-1}$.]{\includegraphics[width=0.32\linewidth]{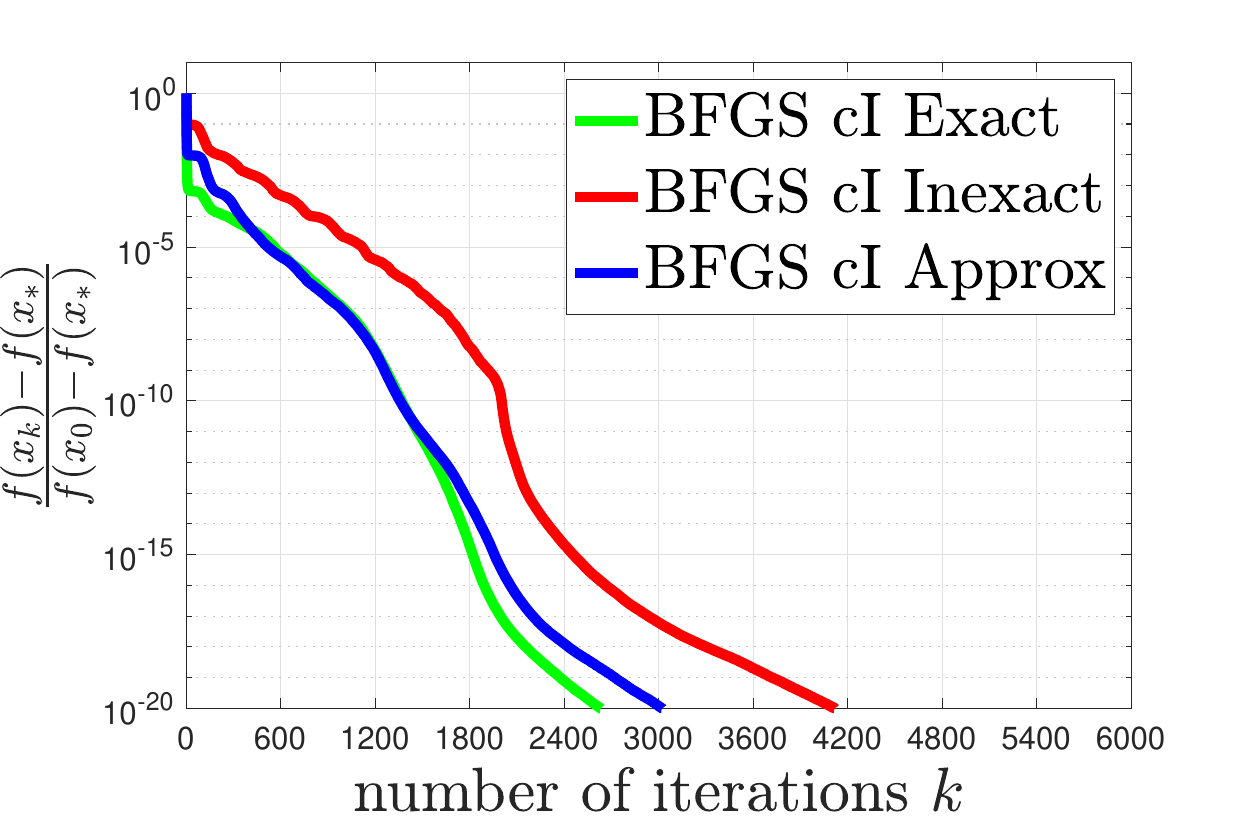}}
    \subfigure[$d:600$, $\kappa:10^2$, $c:1$.]
    {\includegraphics[width=0.32\linewidth]{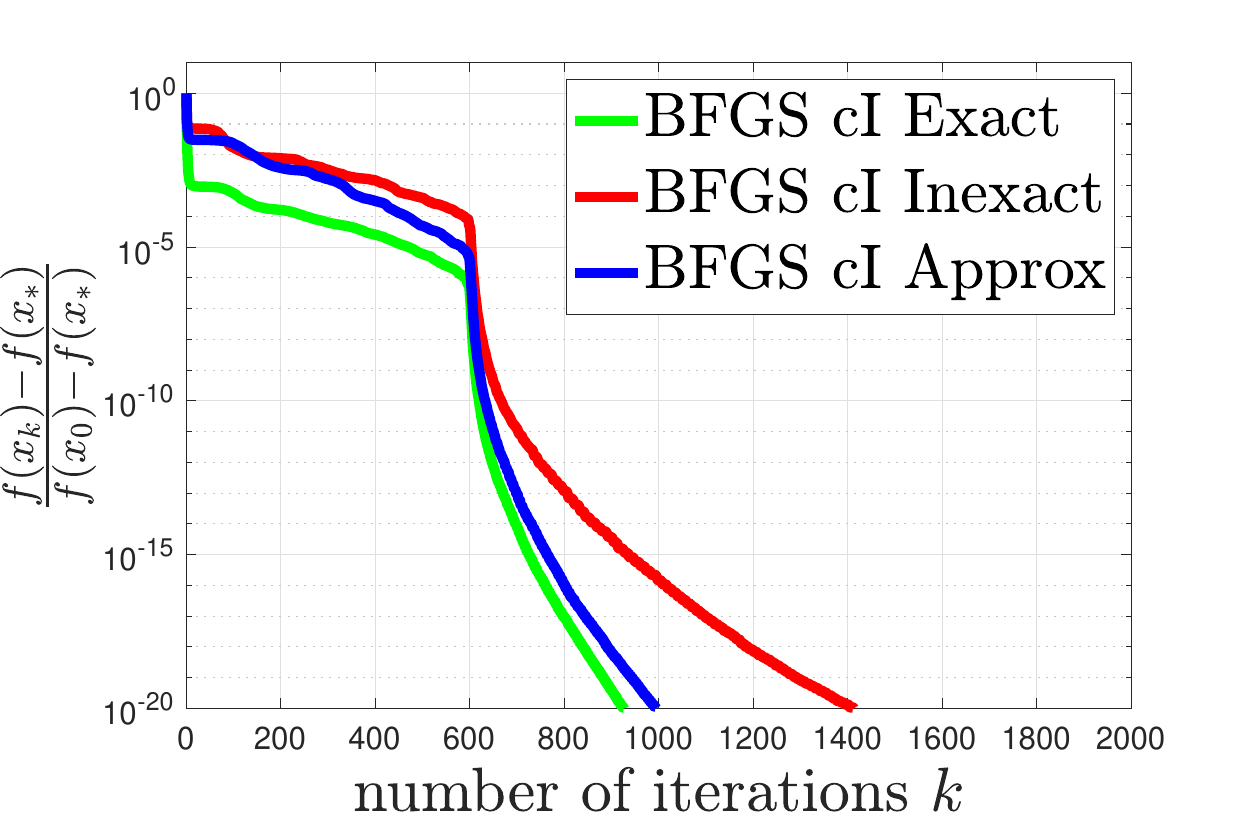}}
    \subfigure[$d:600$, $\kappa:10^3$, $c:10^{-1}$.]{\includegraphics[width=0.32\linewidth]{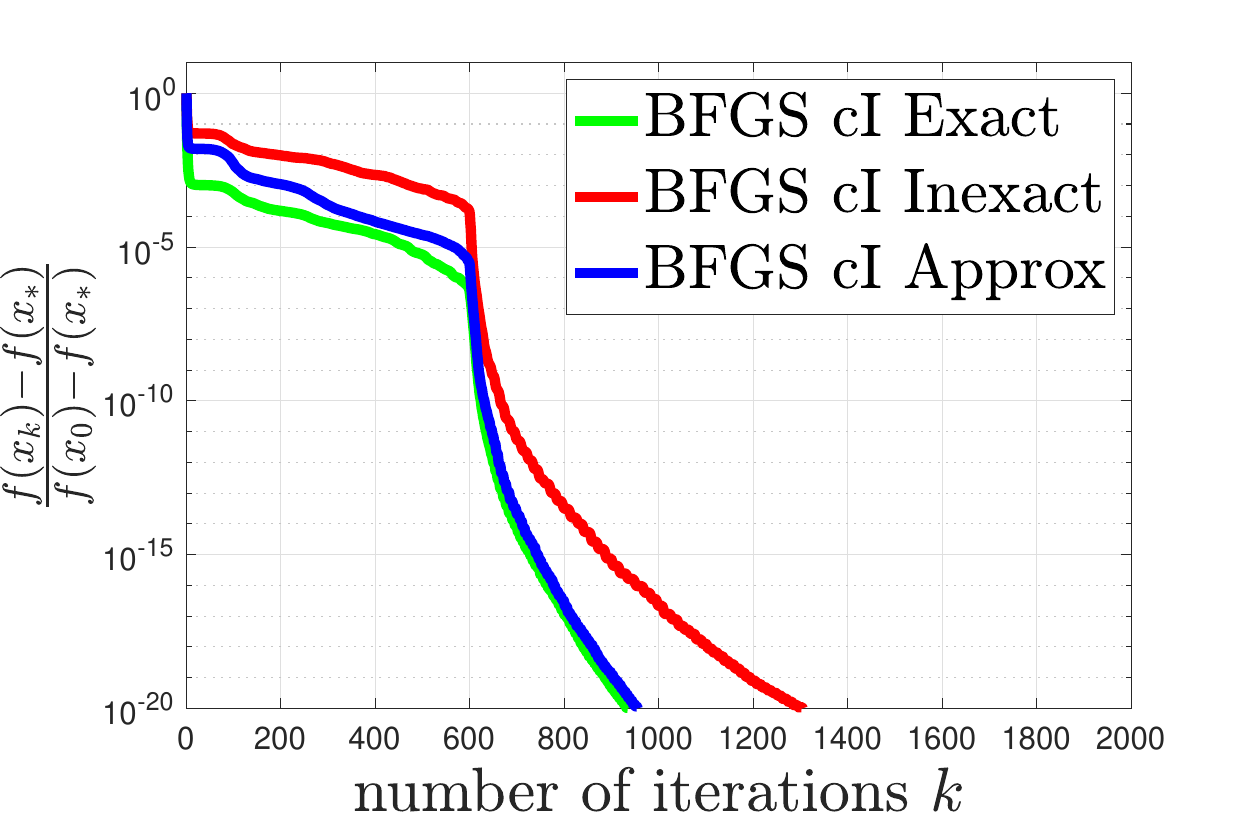}}
    \subfigure[$d:600$, $\kappa:10^4$, $c:10^{-1}$.]{\includegraphics[width=0.32\linewidth]{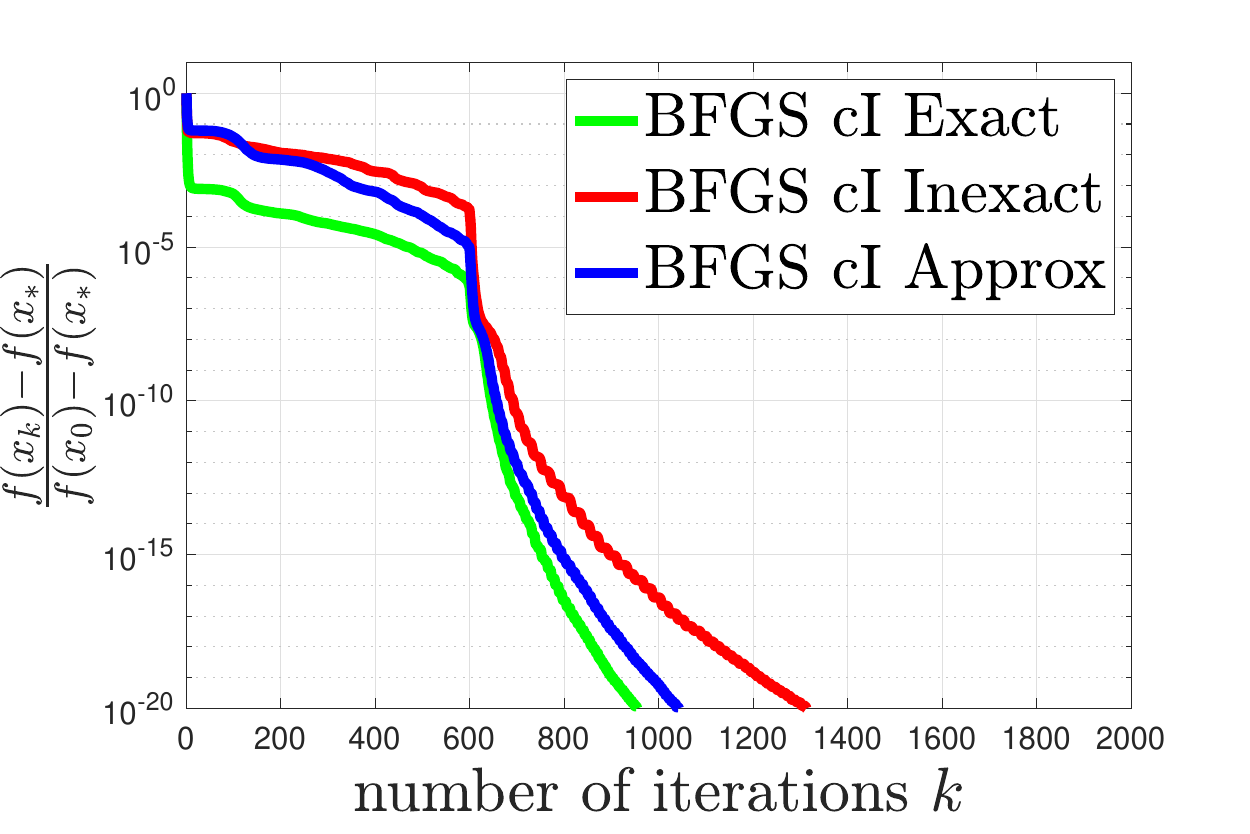}}
    \caption{Convergence rates of BFGS with $B_0 = cI$ for solving the hard cubic objective function with different line search scheme: exact line search, inexact line search and approximate exact line search.}\label{fig:2}
\end{figure}

Additionally, to analyze the sensitivity of BFGS to different line-search schemes, we compare its performance when \( B_0 = cI \) under three distinct line-search strategies, as shown in Figure~\ref{fig:2}. 

The first approach is the \textit{Exact Line Search}, which is the primary focus of our paper. It is implemented using MATLAB's ``fminsearch'' function.  

The second approach is the \textit{Inexact Line Search}, where the step size is determined by enforcing the well-known strong Wolfe conditions:
\begin{align}
    f(x_t + \eta_t d_t) & \leq f(x_t) + \alpha \eta_t\nabla f(x_t)^\top d_t, \label{sufficient_decrease}\\
    |\nabla f(x_t + \eta_t d_t)^\top d_t| & \leq \beta |\nabla f(x_t)^\top d_t|. \label{curvature_condition}
\end{align}
Here, \(\alpha\) and \(\beta\) are line-search parameters that satisfy \( 0 < \alpha < \beta < 1 \) and \( 0 < \alpha < \frac{1}{2} \). To implement this inexact line search, we use the Moré-Thuente line search scheme\footnote{The MATLAB implementation used is available at \url{https://www.cs.umd.edu/users/oleary/software/}.}, which selects a step size \( \eta_t \) at iteration \( t \) that satisfies the strong Wolfe conditions \eqref{sufficient_decrease} and \eqref{curvature_condition}. In our experiments, we set $\alpha = 0.05$ and $\beta = 0.06$ for the above inexact line search and it requires around 10 iterations on average to find $\eta$ satisfying the conditions in \eqref{sufficient_decrease} and \eqref{curvature_condition}. 

The third and final line-search scheme we consider is the \textit{Approximated Exact Line Search}, in which we approximate the solution of the exact line search up to an accuracy of $\epsilon$. Please see Algorithm~\ref{algo} in Appendix~\ref{approx_line_search} for details. 

From Figure~\ref{fig:2}, we observe that the convergence of BFGS with an inexact line search is slightly slower compared to BFGS with an exact line search, whereas BFGS with an approximated exact line search exhibits a convergence behavior nearly identical to the latter.

\section{Conclusion}\label{sec:conclusion}

In this paper, we established explicit global linear and superlinear convergence rates for the BFGS quasi-Newton method with the exact line search scheme, assuming the objective function is strongly convex with a Lipschitz continuous gradient and Hessian. Our results hold for any initial point $x_0 \in \mathbb{R}^{d}$ and any initial Hessian approximation matrix $B_0 \in \mathbb{S}_{++}^{d}$, and they depend on the condition number $\kappa$, the dimension $d$, and the initial function value optimality gap $C_0$. We highlighted the critical role of the initial Hessian approximation matrix in influencing the transition between our established non-asymptotic global linear and superlinear bounds. Furthermore, we specialized our convergence guarantees for different choices of the initial Hessian approximation matrix. Finally, we compared the convergence curves of BFGS with various initial Hessian approximation matrices and line search schemes in the numerical experiments, and the empirical results are consistent with our theoretical analysis.

\section*{Conflict of interest}
The authors declare that they have no conflict of interest.

\appendix

\section*{Appendix}



\section{Proof of Proposition~\texorpdfstring{\ref{lemma_BFGS}}{2}}\label{appen:lemma_BFGS}

First, we show that 
\begin{align}
    \mathbf{Tr}(\hat{B}_{k+ 1}) & = \mathbf{Tr}(\hat{B}_{k}) - \frac{\|\hat{B}_k \hat{s}_k\|^2}{\hat{s}_k^\top \hat{B}_k \hat{s}_k} + \frac{\|\hat{y}_k\|^2}{\hat{s}_k^\top \hat{y}_k}, \label{eq:trace} \\
    \mathbf{Det}(\hat{B}_{k + 1}) & = \mathbf{Det}(\hat{B}_k)\frac{\hat{s}_k^\top \hat{y}_k}{\hat{s}_k^\top \hat{B}_k \hat{s}_k}. \label{eq:determinant}
\end{align}
Taking the trace on both sides of the equation in \eqref{BFGS_weighted} and using the fact that $\mathbf{Tr}(ab^\top) = a^\top b$ for any vectors $a$ and $b$, we obtain the equality in \eqref{eq:trace}. For the proof of \eqref{eq:determinant}, we refer the reader to  \cite[Lemma 6.2]{rodomanov2020rates} . Take the logarithm on both sides of the above equation, we obtain that
\begin{equation*}
    \log{\frac{\hat{s}_k^\top \hat{y}_k}{\hat{s}_k^\top \hat{B}_k \hat{s}_k}} = \log{\mathbf{Det}(\hat{B}_{k + 1})} - \log{\mathbf{Det}(\hat{B}_k)}.
\end{equation*}
Recall that $\hat{m}_k = \frac{\hat{y}_k^\top \hat{s}_k}{\|\hat{s}_k\|^2}$ and $\cos(\hat{\theta}_k) = -\hat{g}_k^\top \hat{s}_k/(\|\hat{g}_k\|\|\hat{s}_k\|)$. Since $\hat{B}_k \hat{s}_k = - \eta_k \hat{g}_k$, we also have $\cos(\hat{\theta}_k) = \hat{s}_k^\top \hat{B}_k \hat{s}_k/ (\|\hat{B}_k \hat{s}_k\|\|\hat{s}_k\|)$. Hence, we can write 
\begin{equation*}
    \frac{\hat{s}_k^\top \hat{y}_k}{\hat{s}_k^\top \hat{B}_k \hat{s}_k} = \frac{\|\hat{B}_k \hat{s}_k\|^2\|\hat{s}_k\|^2}{(\hat{s}_k^\top \hat{B}_k \hat{s}_k)^2} \frac{\hat{s}_k^\top \hat{y}_k}{\|\hat{s}_k\|^2} \frac{\hat{s}_k^\top \hat{B}_k \hat{s}_k}{\|\hat{B}_k \hat{s}_k\|^2} = \frac{\hat{m}_k}{\cos^2(\hat{\theta}_k)} \frac{\hat{s}_k^\top \hat{B}_k \hat{s}_k}{\|\hat{B}_k \hat{s}_k\|^2}. 
\end{equation*}
Thus, we obtain that 
\begin{align*}
    \Psi(\hat{B}_{k+1}) - \Psi(\hat{B}_k) & = \mathbf{Tr}(\hat{B}_{k+ 1}) - \mathbf{Tr}(\hat{B}_{k}) + \log\mathbf{Det}(\hat{B}_k) - \log{\mathbf{Det}(\hat{B}_{k + 1})} \\
    & = \frac{\|\hat{y}_k\|^2}{\hat{s}_k^\top \hat{y}_k} - \frac{\|\hat{B}_k \hat{s}_k\|^2}{\hat{s}_k^\top \hat{B}_k \hat{s}_k} - \log{\frac{\hat{s}_k^\top \hat{y}_k}{\hat{s}_k^\top \hat{B}_k \hat{s}_k}} \\
    & = \frac{\|\hat{y}_k\|^2}{\hat{s}_k^\top \hat{y}_k} - 1 + \log \frac{\cos^2\hat{\theta}_k}{\hat{m}_k} - \left(\frac{\|\hat{B}_k \hat{s}_k\|^2}{\hat{s}_k^\top \hat{B}_k \hat{s}_k}- \log \frac{\|\hat{B}_k \hat{s}_k\|^2}{\hat{s}_k^\top \hat{B}_k \hat{s}_k}+1\right) \\
    & \leq \frac{\|\hat{y}_k\|^2}{\hat{s}_k^\top \hat{y}_k} - 1 + \log \frac{\cos^2\hat{\theta}_k}{\hat{m}_k}.
\end{align*}
where the last inequality holds since $x - \log x + 1 \geq 0$ for any $x > 0$. Hence, \eqref{eq:potential_decrease} follows from the above inequality. Finally, the result in \eqref{eq:sum_of_logs} follows from summing both sides of \eqref{eq:potential_decrease} from $i = 0$ to $k - 1$, i.e.,
\begin{align*}
    \Psi(\hat{B}_{k}) \leq \Psi(\hat{B}_{0}) + \sum_{i = 0}^{k - 1}\left(\frac{\|\hat{y}_i\|^2}{\hat{s}_i^\top \hat{y}_i} - 1\right) + \sum_{i = 0}^{k - 1} \log \frac{\cos^2\hat{\theta}_i}{\hat{m}_i},
\end{align*}
which further implies that 
\begin{align*}
    \sum_{i = 0}^{k - 1} \log{\frac{\cos^2(\hat{\theta}_i)}{\hat{m}_i}} \geq  \Psi(\hat{B}_{k}) - \Psi(\hat{B}_{0}) + \sum_{i = 0}^{k - 1}\left(1-\frac{\|\hat{y}_i\|^2}{\hat{s}_i^\top \hat{y}_i} \right) \geq - \Psi(\hat{B}_{0}) + \sum_{i = 0}^{k - 1}\left(1-\frac{\|\hat{y}_i\|^2}{\hat{s}_i^\top \hat{y}_i} \right),
\end{align*}
where the last inequality holds since $\Psi(\hat{B}_{k}) \geq 0$ for any $k \geq 0$.

\section{Proof of Lemma~\texorpdfstring{\ref{lemma_Hessian}}{2}}\label{proof_of_lemma_Hessian}

\begin{enumerate}[(a)]
    
    \item Recall that $J_k = \int_{0}^{1}\nabla^2{f(x_k + \tau (x_{k + 1} - x_k))}d\tau$. Using the triangle inequality, we have 
    \begin{align*}
        \|\nabla^2{f(x_{*})} - J_k\| &= \left\| \int_{0}^{1}\!\!\left(\nabla^2{f(x_{*})} - \nabla^2{f(x_k + \tau (x_{k+1}-x_k))} \right)d\tau\right\| \\
        &\leq \int_{0}^{1}\|\nabla^2{f(x_{*})} - \nabla^2{f(x_k + \tau (x_{k+1}-x_k))} \|d\tau. 
    \end{align*}
    Moreover, it follows from Assumption~\ref{ass_Hess_lip} that $ \|\nabla^2{f(x_{*})} - \nabla^2{f(x_k + \tau (x_{k+1}-x_k))} \| \leq M \|(1 - \tau)(x_* - x_k) + \tau (x_* - x_{k + 1})\|$ for any $\tau \in [0,1]$. Thus, we can further apply the triangle inequality to obtain  
    \begin{align*}
        \|\nabla^2{f(x_{*})} - J_k\| & \leq \int_{0}^{1}M \|(1 - \tau)(x_* - x_k) + \tau (x_* - x_{k + 1})\|d\tau \\
        & \leq M  \|x_k - x_*\|\int_{0}^{1}(1 - \tau)d\tau + M \|x_{k + 1} - x_*\|\int_{0}^{1} \tau d\tau \\
        & =  \frac{M}{2}(\|x_k - x_*\| + \|x_{k + 1} - x_*\|). 
    \end{align*}
    Since $f$ is strongly convex, by Assumption~\ref{ass_str_cvx} and $f(x_{k+1}) \leq f(x_k)$, we have $\frac{\mu}{2}\|x_k - x_*\|^2 \leq f(x_k) - f(x_*)$, which implies that $\|x_k-x_*\| \leq \sqrt{2(f(x_k)-f(x_*))/\mu}$. Similarly, since $f(x_{k+1}) \leq f(x_k)$, it also holds that  $ \|x_{k+1}-x_*\| \leq \sqrt{2(f(x_{k+1})-f(x_*))/\mu} \leq \sqrt{2(f(x_{k})-f(x_*))/\mu}$. Hence, we obtain 
    \begin{equation}\label{eq:H_*-J_k}
        \|\nabla^2{f(x_{*})} - J_k\| \leq \frac{M}{\sqrt{\mu}}\sqrt{2(f(x_k) - f(x_*))}
    \end{equation}
    Moreover, notice that by Assumption~\ref{ass_str_cvx}, we also have $J_k \succeq \mu I$ and $\nabla^2 f(x_*) \succeq \mu I$. Hence, \eqref{eq:H_*-J_k} implies that 
    \begin{align*}
        \nabla^2{f(x_{*})} - J_k &\preceq \|\nabla^2{f(x_{*})} - J_k\|I \preceq \frac{M}{\mu^{\frac{3}{2}}}\sqrt{2(f(x_k) - f(x_*))}J_k \leq C_k J_k, \\
        J_k - \nabla^2{f(x_{*})} &\preceq \|J_k - \nabla^2{f(x_{*})}\|I \preceq \frac{M}{\mu^{\frac{3}{2}}}\sqrt{2(f(x_k) - f(x_*))}\nabla^2{f(x_{*})} \leq C_k \nabla^2{f(x_{*})}.
    \end{align*}
    where we used the definition of $C_k$ in \eqref{distance}. By rearranging the terms, we obtain \eqref{eq:J_k_vs_H*}.  
    
    \item Recall that $G_k = \int_{0}^{1}\nabla^2{f(x_k + \tau (x_{*} - x_k))}d\tau$. Similar to the arguments in (a), we have 
    \begin{equation}\label{eq:H_*-G_k}
        \begin{split}
            \left\| \nabla^2{f(x_{*})} - G_k \right\|
            & = \left\|\int_{0}^{1}\left(\nabla^2{f(x_{*})} - \nabla^2{f(x_k + \tau (x_{*} - x_k))}\right)d\tau \right\| \\
            & \leq \int_{0}^{1}\|\nabla^2{f(x_{*})} - \nabla^2{f(x_k + \tau (x_{*} - x_k))}\|d\tau \\
            & \leq M\int_{0}^{1}\|(1 - \tau)(x_* - x_k)\| d\tau = M\|x_k - x_*\|\int_{0}^{1}(1 - \tau)d\tau \\
            & = \frac{M}{2}\|x_k - x_*\|  \leq \frac{M}{\sqrt{\mu}}\sqrt{2(f(x_k) - f(x_*))}.
        \end{split}
    \end{equation}
    Moreover, notice that by Assumption~\ref{ass_str_cvx} we also have $G_k \succeq \mu I$ and $\nabla^2 f(x_*) \succeq \mu I$.
    The rest follows similarly as in the proof of (a) and we prove \eqref{eq:G_k_vs_H*}.

    \item 
    For any $\hat{\tau} \in [0, 1]$, we have
    \begin{equation*}
        \begin{split}
            & \phantom{{}={}}\left\| \nabla^2{f(x_k + \hat{\tau} (x_{k + 1} - x_{k}))} -  \nabla^2{f(x_*)}\right\| \\
            & \leq M\|x_k + \hat{\tau} (x_{k + 1} - x_{k}) - x_*\| \leq M (\hat{\tau} \|x_{k + 1} - x_*\| + (1 - \hat{\tau}) \|x_{k} - x_*\| ) \\
            & \leq \frac{M}{\sqrt{\mu}}(\hat{\tau}\sqrt{2(f(x_{k + 1}) - f(x_*))} + (1 - \hat{\tau}) \sqrt{2(f(x_{k}) - f(x_*))}) \\
            & \leq \frac{M}{\sqrt{\mu}}(\hat{\tau}\sqrt{2(f(x_{k}) - f(x_*))} + (1 - \hat{\tau}) \sqrt{2(f(x_{k}) - f(x_*))}) \\
            & = \frac{M}{\sqrt{\mu}}\sqrt{2(f(x_{k}) - f(x_*))}.
        \end{split}
    \end{equation*}
    Together with \eqref{eq:H_*-J_k}, it follows from the triangle inequality that 
    \begin{equation*}
        \begin{split}
            & \phantom{{}={}}\left\| \nabla^2{f(x_k + \hat{\tau} (x_{k + 1} - x_{k}))} -  J_k\right\| \\
            & \leq \|\nabla^2{f(x_k + \hat{\tau} (x_{k + 1} - x_{k}))} -  \nabla^2{f(x_*)}\| + \|\nabla^2{f(x_*)} -  J_k\| \\
            & \leq \frac{2M}{\sqrt{\mu}}\sqrt{2(f(x_k) - f(x_*))}.
        \end{split}
    \end{equation*}
    Moreover, notice that by Assumption~\ref{ass_str_cvx}, we also have $\nabla^2{f(x_k + \hat{\tau} (x_{k + 1} - x_{k}))} \succeq \mu I$ and $J_k \succeq \mu I$. The rest follows similarly as in the proof of (a) and we prove \eqref{eq:middle_hessian_vs_J_k}.
    
    \item 
    For any $\tilde{\tau} \in [0, 1]$, we have that
    \begin{equation*}
        \begin{split}
            \left\| \nabla^2{f(x_k + \tilde{\tau} (x_{*} - x_{k}))} -  \nabla^2{f(x_{*})} \right\|
            & \leq M\|x_k + \tilde{\tau} (x_{*} - x_{k})) - x_*\| \\
            & = M(1 - \tilde{\tau})\|x_{*} - x_{k}\| \\
            & \leq M \|x_{*} - x_{k}\| \\
            & \leq \frac{M}{\sqrt{\mu}}\sqrt{2(f(x_k) - f(x_*))}.
        \end{split}
    \end{equation*}
    Together with~\eqref{eq:H_*-G_k}, it follows form the triangle inequality that 
    \begin{equation*}
        \begin{split}
        & \phantom{{}={}}\left\| \nabla^2{f(x_k + \tilde{\tau} (x_{*} - x_{k}))} -  G_k\right\| \\
        & \leq \left\| \nabla^2{f(x_k + \tilde{\tau} (x_{*} - x_{k}))} -  \nabla^2{f(x_{*})}\right\| + \left\| \nabla^2{f(x_{*})} -  G_k\right\| \\
        & \leq \frac{2M}{\sqrt{\mu}}\sqrt{2(f(x_k) - f(x_*))}.
    \end{split}
    \end{equation*}
    Moreover, notice that by Assumption~\ref{ass_str_cvx}, we also have $\nabla^2{f(x_k + \tilde{\tau} (x_{*} - x_{k}))} \succeq \mu I$ and $G_k \succeq \mu I$. The rest follows similarly as in the proof of (a) and we prove \eqref{eq:middle_hessian_vs_G_k}.

\end{enumerate}


\section{Approximate Exact Line Search Algorithm}\label{approx_line_search}

The approximation exact line search is implemented using the bisection Algorithm~\ref{algo} with $\epsilon$ as the approximation error. The key idea is to select $\eta$ such that $\nabla f(x_t + \eta d_t)^\top d_t \approx 0$, since the exact line search step size $\eta_{\text{exact}}$ satisfies the condition $\nabla f(x_t + \eta_{\text{exact}} d_t)^\top d_t = 0$. We begin with an initial step size of $\eta = 1$ and iteratively double it until $\nabla f(x_t + \eta d_t)^\top d_t > 0$. Once this condition is met, we apply the bisection algorithm, leveraging the sign of $\nabla f(x_t + \eta d_t)^\top d_t$ to refine $\eta$. The bisection algorithm is well-suited for this task because the function \( h(\eta) = \nabla f(x_t + \eta d_t)^\top d_t \) is strictly increasing. This follows from the strong convexity of the objective function \( f \), which ensures that \( h'(\eta) = d_t^\top \nabla^2 f(x_t + \eta d_t) d_t > 0 \). Additionally, we note that \( h(0) = \nabla f(x_t)^\top d_t = -g_t^\top B_t^{-1}g_t < 0 \), since \( B_t \) is symmetric positive definite, and that \( h(\eta_{\text{exact}}) = 0 \) with \( \eta_{\text{exact}} > 0 \). In our experiments, we set the required accuracy for this scheme to be $\epsilon = 10^{-8}$ and we observe that on average after 15 iterations the bisection method converges. 

\begin{algorithm}[ht!]
\caption{Bisection Algorithm for Approximation Exact Line Search}\label{algo} 
\begin{algorithmic}
\State \textbf{Input:} Initialized step size $\eta = 1$ and approximation error $\epsilon$
\While {$\nabla{f}(x_t + \eta d_t)^\top d_t < 0$}
    \State $\eta = 2\eta$
\EndWhile
\State Set $\eta_{\min} = 0$ and $\eta_{\max} = \eta$
\While {$\eta_{\max} - \eta_{\min} > \epsilon$}
    \State $\eta = (\eta_{\max} + \eta_{\min})/2$
    \If{$\nabla{f}(x_t + \eta d_t)^\top d_t > 0$}
        \State $\eta_{\max} = \eta$
    \ElsIf{$\nabla{f}(x_t + \eta d_t)^\top d_t < 0$}
        \State $\eta_{\min} = \eta$
    \Else
        \State \textbf{break}
    \EndIf
\EndWhile
\end{algorithmic}
\end{algorithm}

\newpage

\printbibliography

\end{document}